\documentclass{amsart}

\usepackage{amsmath,amssymb}
\usepackage{enumerate}
\usepackage{xcolor}
\usepackage{graphicx}
\usepackage{hyperref}

\newtheorem{theorem}{Theorem}[section]
\newtheorem{corollary}[theorem]{Corollary}
\newtheorem{lemma}{Lemma}[section]
\newtheorem{proposition}{Proposition}[section]
\theoremstyle{remark}
\newtheorem{remark}{Remark}[section]

\theoremstyle{definition}
\newtheorem{definition}{Definition}[section]

\def\N{{\mathbb{N}}}
\def\Z{{\mathbb{Z}}}

\def\MN{{\mathbb{N}}}

\def\MA{{\mathbb{A}}}
\def\MB{{\mathbb{B}}}

\makeatletter
\newsavebox{\@brx}
\newcommand{\llangle}[1][]{\savebox{\@brx}{\(\m@th{#1\langle}\)}%
  \mathopen{\copy\@brx\kern-0.5\wd\@brx\usebox{\@brx}}}
\newcommand{\rrangle}[1][]{\savebox{\@brx}{\(\m@th{#1\rangle}\)}%
  \mathclose{\copy\@brx\kern-0.5\wd\@brx\usebox{\@brx}}}
\makeatother

\newcommand{\nstdN}{\widetilde{\mathbb N}}
\newcommand{\nstdM}{\widetilde{\mathbb N}}
\newcommand{\ndots}{\ldots}
\newcommand{\List}{\mathrm{List}}

\newcommand{\Th}{{\mathop{\mathrm{Th}}}}

\newcommand{\an}[1]{{\color{red}AN: #1}}

\begin{document}

\title[Nonstandard free groups]{Nonstandard free groups}

\author{Alexei Miasnikov}
\email{amiasnik@stevens.edu}
\author{Andrey Nikolaev}
\email{anikolae@stevens.edu}


\begin{abstract}
Interpretation of a structure $\mathbb A$ in $\mathbb B$ allows to produce structures elementarily equivalent to $\mathbb A$ given those elementarily equivalent to $\mathbb B$. In particular, interpretation of the free group in $\mathbb N$ enables us to introduce and study a family of elementary free groups, which we call nonstandard free groups. More generally, for a wide class of groups we introduce nonstandard models arising from interpretation in $\mathbb N$. We exploit interpretation to show that under mild assumptions, ultrapowers of a group can be viewed as nonstandard models of that group. This leads us to describe the structure of the ultrapowers in terms of structure of nonstandard models of natural numbers, offering insight into a longstanding question of Malcev. We also introduce fundamentals of nonstandard combinatorial group theory such as the notions of nonstandard subgroups, nonstandard normal subgroups, and nonstandard group presentations.
\end{abstract}

\keywords{Bi-interpretability, rich structures, nonstandard model, Peano arithmetic, polynomial}

\maketitle
\tableofcontents

\section{Motivation and main ideas}\label{se:motivation}

Let $G$ be a countable group where the multiplication is not absolutely bizarre, say, there is a generating set for which the word problem is decidable, or recursively enumerable, or just arithmetic, i.e., it is defined by a formula of arithmetic (definable in arithmetic). Then $G$ is interpretable in $\Z$ via some interpretation $\Gamma$, symbolically $G \cong \Gamma(\Z)$ (see Section~\ref{se:interpretation} for relevant definitions). If $ \widetilde \Z \equiv \Z$ then $\Gamma$ interprets a group $\widetilde G = \Gamma(\widetilde \Z)$ in $\widetilde \Z$, which we call a  \emph{nonstandard model} of $G$. 
If $G$ is finitely generated, then the group $\Gamma(\widetilde \Z)$ does not depend on the particular interpretation $\Gamma$, so if $G \cong \Gamma(\Z)$ and $G \cong \Delta(\Z)$ then $\Gamma(\widetilde \Z) \cong \Delta(\widetilde \Z)$ for any ring $\widetilde \Z \equiv \Z$ (Theorem~\ref{th:equiv2}).
In this case, the nonstandard versions of $G$ are completely determined by $G$ and $\widetilde \Z$, so this is ultimately about groups, not interpretations. With this in mind, we denote $\Gamma(\widetilde{\mathbb Z})$ by $G(\widetilde \Z)$ or $\widetilde{G}$. If $G$ is not finitely generated, we still denote $\Gamma(\widetilde \Z)$ by $G(\widetilde \Z)$ or $\widetilde{G}$ if the interpretation $\Gamma$ is irrelevant or clear from the context.



The central idea is  that the nonstandard groups $G(\widetilde \Z)$ are the closest versions of $G$ which, on the one hand, preserve the principal properties of $G$ that we want to investigate, but on the other hand, are general enough so many non-essential features of $G$ disappear in them.
Furthermore, since $G(\widetilde \Z) = \Gamma(\widetilde \Z)$ one can see and recover from $\Gamma$ and $\widetilde \Z$ the general constructions and frames on which these essential properties depend.
This leads to a better understanding of $G$ through the variety of its nonstandard models and gives a general method of approaching  various open problems on $G$.
Some of these problems can be solved using nonstandard groups directly, while for others, nonstandard groups show where to look for solutions and what kind of tools to use. In the present paper, we restrict our attention to the former, as we explain below in Sections~\ref{se:FOC} and~\ref{se:ultrapowers}.

When $G$ is a free group $G=F$, we call the corresponding nonstandard model $F(\widetilde{\mathbb Z})$ the nonstandard free group, and denote it by $\widetilde{F}$. In many regards, it behaves like the standard free group $F$, with the key difference that $\widetilde{F}$ ``natively'' admits certain infinite products. For that, the principal instrument is so-called nonstandard list superstructure (see Section~\ref{se:nonstd_list}). This feature propagates to numerous other usual notions of group theory, leading us to introduce nonstandard subgroups, nonstandard group presentations, and the first isomorphism theorem, which are explored in Sections~\ref{se:basic_structure} and~\ref{se:presentations}.

\subsection{First-order classification and elementary equivalence} 
\label{se:FOC}

Denote by $Th(G)$ the first-order theory of $G$ in the language of group theory $L = \{\cdot, ^{-1},1\}$.
One of the initial motivations of this project is in developing a general approach for  characterizing algebraically all the groups $H$ that are elementarily equivalent to a given group $G$. This is the well-known Tarski first-order classification problem for groups. 

It follows from the properties of interpretations that $G(\widetilde \Z)\equiv G$ (Theorem~\ref{th:equiv0}) for any $\widetilde \Z \equiv \Z$.
However, in general, not all models of $Th(G)$ are nonstandard versions of $G$.
For example, this is the case for a free non-abelian group $F$ or a non-abelian torsion-free hyperbolic group $G$ (it comes from the solution of the Tarski problem for these groups \cite{Kharlampovich-Myasnikov:2006,Sela:2006}).
Nevertheless, the nonstandard groups $G(\widetilde{\Z})$ are important models of $Th(G)$; they are ``more equivalent'' to $G$ than all other models of $Th(G)$. In fact, they are equivalent to $G$ in a much stronger logic than the first-order logic in the language of groups. This is a new type of logic, which deserves a study in its own right. We offer a brief introductory glimpse into this in Section~\ref{se:summation_arb_superstructre}, in particular, Subsection~\ref{se:wso_translation}.

Furthermore, it turns out that the nonstandard models of $G$ give an instrument to understand all other models of $Th(G)$.
Indeed, first, like in the nonstandard arithmetic, the group $G$ embeds into each $G(\widetilde \Z)$ as the \emph{standard part} of $G(\widetilde \Z)$ and this embedding is elementary (Theorem~\ref{th:elementary_submodel}).
Second, every model $H$ of $Th(G)$ is an elementary submodel of some nonstandard model $G(\widetilde \Z)$ of the same cardinality (Theorem~\ref{th:equivalent_models}). Third, every saturated model of $Th(G)$ (under the generalized continuum hypothesis) is a nonstandard version $G(\widetilde \Z)$ for some saturated model $\widetilde \Z$ of arithmetic (Theorem~\ref{th:saturated}).

Observe that a nonstandard free group $\widetilde{F}=F(\widetilde{\mathbb Z})$ is not finitely generated if $\widetilde{\mathbb Z}\neq \mathbb Z$. Indeed, a finitely generated group elementarily equivalent to a free group must be hyperbolic, and therefore has cyclic centralizers. However, it is easy to see that in $\widetilde{F}$, centralizers are isomorphic to the additive group $\widetilde{\mathbb Z}^+$ (Proposition~\ref{pr:centralizers}), which is not cyclic (see Section~\ref{se:nonstandard_arithmetic}).

Motivated by questions posed in~\cite{Kharlampovich-Myasnikov-Sklinos:2020} regarding countable groups elementarily equivalent to a free group $F$, the paper~\cite{Kharlampovich-Natoli:2024} describes groups $G\equiv F$ with cyclic centralizers, and also gives the example $\mathbb Z*(\mathbb Z \oplus \mathbb Q^+)$ of a group elementarily equivalent to $F$ but with non-cyclic centralizers. All nonstandard free groups $\widetilde{F}$ are new examples of groups with non-cyclic centralizers. In fact, since all centralizers in $\widetilde{F}$ are non-cyclic, it follows that the above group $\mathbb Z*(\mathbb Z + \mathbb Q^+)$ is not a nonstandard free group.

\subsection{Ultrapowers and nonstandard groups}  \label{se:ultrapowers} Another piece of motivation is to address an old problem due to Malcev: for a free nonabelian  group $F$, a set  $I$, and a non-principal ultrafilter $D$ on $I$ describe the algebraic structure of the ultrapower $F^I/D$.
At the time, the problem was ultimately related to the Tarski problem of whether or not two non-abelian free groups, say $F_2$ and $F_3$, are elementarily equivalent to each other, via the Keisler--Shelah theorem (two groups are elementarily equivalent iff their non-trivial ultrapowers are isomorphic).
Later, this question was asked for other groups $G$ as well.
If $G$ is ``finitely dimensional'', say algebraic over a field $K$ (or over a suitable ring $R$), then $G^I/D$ is again algebraic with the same algebraic scheme, but over $K^I/D$  (or $R^I/D$), so we know the algebraic structure of $G^I/D$.
This idea was successfully applied to the  classical matrix groups or nilpotent groups, but it does not work for ``infinitely dimensional'' groups like $F$, or nonabelian torsion-free hyperbolic groups.
However, if $G \simeq \Gamma(\Z)$ for some $\Gamma$, then $G^I/D \cong G(\widetilde \Z)$, where $\widetilde \Z \cong \Z^I/D \equiv \Z$; here $\Gamma$ plays a part of ``first-order algebraic scheme'' for $G$.
Hence $G^I/D$ is a nonstandard model of $G$ with some extra properties that come from the ring $\Z^I/D$.
This gives a method to approach questions on the algebraic structure of $G^I/D$ viewed as $G(\widetilde \Z)$.
In particular, returning to the original Malcev's question, we can see that $F^I/D \cong F(\widetilde \Z)$ is a nonstandard free group (see Section~\ref{se:free_group}) with basis $X$, the same object we introduced earlier in this section. This allows to describe $F^I/D$ in the same terms as nonstandard models in general, see Theorems~\ref{th:free_group_ultrapower}, \ref{th:free_group_ultrapower}, \ref{th:arbitrary_group_ultrapower_gen}; in fact, it is easy to see that the same applies to a wide class of algebraic structures, see Theorem~\ref{th:arbitrary_structure_ultrapower_gen}.

\section{Interpretability}\label{se:prelims}
In this section, we discuss various types of interpretability of structures; some of them are well-known, others are new. Each of them serves different purposes. Our focus is mostly on the regular interpretability, which suits well to the classical first-order classification problem. 

Let $L$ be a language (signature) with a set of functional (operational) symbols $\{f, \ldots\}$ together with their arities $n_f \in \N$, a set of constant symbols $\{c, \ldots\}$ and a set of relation (or predicate) symbols $\{R, \ldots \}$ coming together with their arities $n_R \in \N$. We write $f(x_1, \ldots,x_n)$ or $R(x_1,\ldots,x_n)$ to show that $n_f = n$ or $n_R=n$. The standard language of groups $\{\cdot\,,\,^{-1},e\}$ includes the symbol $\cdot$ for binary operation of multiplication, the symbol $^{-1}$ for unary operation of inversion, and the symbol $e$ for the group unit; the standard language of unitary rings is $\{+,\,\cdot\,,0,1\}$. An interpretation of a constant symbol $c$ in a set $A$ is an element $c^A\in A$. For a functional symbol $f$ an interpretation in $A$ is a function $f^A\colon A^{n_f}\to A$, and for a predicate $R$ it is a set $R^A\subseteq A^{n_R}$.

An algebraic structure in the language $L$ (an $L$-structure) with the base set $A$ is denoted by $\MA = \langle A; L\rangle$, or simply by $\MA = \langle A; f, \ldots,R,\ldots,c, \ldots \rangle$, or by $\MA = \langle A; f^A, \ldots,R^A,\ldots,c^A, \ldots \rangle$. For a given structure $\MA$ by $L(\MA)$ we denote the language of $\MA$. 

We usually denote variables by small letters $x,y,z, a,b, u,v, \ldots$, while the same symbols with bars $\bar x, \bar y, \ldots$ denote tuples of the corresponding variables, say $\bar x = (x_1, \ldots,x_n)$, and furthermore, $\bar{\bar x}$ is a tuple of tuples $\bar{\bar x}=(\bar x_1,\ldots,\bar x_m)$, where ${|\bar x_i|=n}$.

By default, we consider the sets $\mathbb N$, $\mathbb Z$, $\mathbb Q$, $\mathbb R$ and $\mathbb C$ in the language $\{+,\cdot,0,1\}$.
Note that definable sets in arithmetic $\mathbb N$, the so-called arithmetic sets, are well-studied.
In particular, it is known that every computably enumerable set in $\mathbb N$ is definable. The same holds for the ring of integers $\mathbb Z$. Furthermore, every element of $\mathbb N$ (or $\mathbb Z$, or $\mathbb Q$) is absolutely definable in 
$\mathbb N$ ($\mathbb Z$, $\mathbb Q$), so every definable set in $\mathbb N$ ($\mathbb Z$, $\mathbb Q$) is absolutely definable.

\subsection{Interpretations}\label{se:interpretation}

\begin{definition} \label{de:interpretation}\label{de:interpretable} 
An algebraic structure $\MA = \langle A;f,\ldots,R,\ldots,c,\ldots\rangle$ is $0$-{\em interpretable} (or {\em absolutely interpretable}, or {\em interpretable without parameters}) in an algebraic structure $\MB=\langle B;L(\MB)\rangle$ if the following conditions hold:
\begin{enumerate}[1)]
\item there is a subset $A^\ast \subseteq B^n$ $0$-definable in $\MB$, 
\item there is an equivalence relation $\sim$ on $A^\ast$ $0$-definable in $\MB$, 
\item there are interpretations $f^A, \ldots$, $R^A,\ldots$, $c^A, \ldots$ of the symbols $f, \ldots,$ $R,\ldots,$ $c, \ldots$ on the quotient set $A^\ast /{\sim}$, all $0$-definable in $\MB$, 
\item the structure $\MA^\ast=\langle A^\ast /{\sim}; f^A, \ldots,R^A,\ldots, c^A, \ldots \rangle$ is $L(\MA)$-isomorphic to~$\MA$.
\end{enumerate}
\end{definition}

The structure $\mathbb A^\ast$ from Definition~\ref{de:interpretation} is completely described by the following set of formulas in the language $L(\mathbb B)$:
\begin{equation} \label{eq:code}
\Gamma = \{U_\Gamma(\bar x), E_\Gamma(\bar x, \bar x^\prime), Q_\Gamma(\bar x_1, \ldots,\bar x_{t_Q}) \mid Q \in L(\mathbb A)\},
\end{equation}
where $\bar x$, $\bar x^\prime$ and $\bar x_i$ are $n$-tuples of variables.
Namely, $U_\Gamma$ defines in $\mathbb B$ a set $A_\Gamma = U_\Gamma(B^n) \subseteq B^n$ (the set $A^\ast$ in Definition~\ref{de:interpretation}), $E_\Gamma$ defines an equivalence relation $\sim_\Gamma$ on $A_\Gamma$ (the equivalence relation $\sim$ in Definition~\ref{de:interpretation}), and the formulas $Q_\Gamma$ define the preimages of the graphs for constants, functions, and predicates $Q\in L(\mathbb A)$ on the quotient set $A_\Gamma/{\sim_\Gamma}$ in such a way that the structure $\Gamma(\mathbb B) = \langle A_\Gamma/{\sim_\Gamma}; L(\mathbb A) \rangle $ is isomorphic to $\mathbb A$.
Here $t_c=1$ for a constant $c\in L(\mathbb A)$, $t_f=n_f+1$ for a function $f\in L(\mathbb A)$ and $t_R=n_R$ for a predicate $R\in L(\mathbb A)$.
Note that we interpret a constant $c \in L(\mathbb A)$ in the structure $\Gamma(\mathbb B)$ by the $\sim_\Gamma$-equivalence class of some tuple $\bar b_c \in A_\Gamma$ defined in $\mathbb B$ by the formula $c_\Gamma(\bar x)$.
The number $n$ is called the {\em dimension} of $\Gamma$, denoted $n = \dim \Gamma$.

We refer to $\Gamma$ as the {\em interpretation code} or just the {\em code} of the interpretation $\mathbb A\rightsquigarrow\mathbb B$.
Sometimes we identify the interpretation $\mathbb A\rightsquigarrow\mathbb B$ with its code $\Gamma$.
We may also write $\mathbb A\stackrel{\Gamma}{\rightsquigarrow}\mathbb B$ or $\mathbb A\cong\Gamma(\mathbb B)$ to say that $\mathbb A$ is interpretable in $\mathbb B$ by means of $\Gamma$.

By $\mu_\Gamma$ we denote a surjective map $A_\Gamma \to A$ that gives rise to an isomorphism $\bar \mu_\Gamma\colon \Gamma(\MB) \to \MA$. We refer to $\mu_\Gamma$ as the {\em coordinate map} of the interpretation $\Gamma$. Note that such $\mu_\Gamma$ may not be unique because $\bar\mu_\Gamma$ is defined up to an automorphism of $\MA$. For this reason, the coordinate map $\mu_\Gamma$ is sometimes added to the interpretation notation: $(\Gamma, \mu_\Gamma)$. A coordinate map $\mu_\Gamma\colon A_\Gamma \to A$ gives rise to a map $\mu_\Gamma^m\colon A^m_\Gamma\to A^m$ on the corresponding Cartesian powers, which we often denote by $\mu_\Gamma$.

When the formula $E_\Gamma$ defines the identity relation $(x_1=x^\prime_1)\wedge\ldots\wedge(x_n=x^\prime_n)$, the surjection $\mu_\Gamma$ is injective. In this case, $(\Gamma,\bar p, \mu_\Gamma)$ is called an {\em injective interpretation}.




Of primary interest to us is the following well-known observation. 

\begin{theorem} [Lemma 4.4(2), \cite{KharlampovichMyasnikovSohrabi:2021}]\label{th:equiv0}
Let $\mathbb A$ be absolutely interpretable in $\mathbb B$ as $\mathbb A \cong \Gamma(\mathbb B)$. Then for any $\widetilde{\mathbb B}\equiv\mathbb B$ the algebraic structure $\Gamma(\widetilde{\mathbb B})$ is elementarily equivalent to $\mathbb A$.
\end{theorem}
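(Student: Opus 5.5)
The plan is the standard translation argument. For every formula $\varphi(y_1,\ldots,y_k)$ in the language $L(\mathbb A)$, I will build by induction on $\varphi$ a formula $\varphi_\Gamma(\bar x_1,\ldots,\bar x_k)$ in $L(\mathbb B)$, with each $\bar x_i$ an $n$-tuple, $n=\dim\Gamma$. The key property is that, for any $L(\mathbb B)$-structure $\mathbb C$ in which the code $\Gamma$ actually defines an $L(\mathbb A)$-structure $\Gamma(\mathbb C)$, and for all $\bar b_i\in A_\Gamma(\mathbb C)$,
\[
\Gamma(\mathbb C)\models \varphi([\bar b_1],\ldots,[\bar b_k]) \iff \mathbb C\models \varphi_\Gamma(\bar b_1,\ldots,\bar b_k).
\]

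First I will reduce to relational atomic formulas. I rewrite every $L(\mathbb A)$-formula, up to logical equivalence, so that each atomic subformula has one of the forms $y=y'$, $y=c$, $f(\bar y)=y'$ or $R(\bar y)$; nested terms are removed by introducing existentially quantified variables. The translation is then as follows:
\begin{itemize}
\item $y=y'$ becomes $E_\Gamma(\bar x,\bar x')$, and each other atomic formula becomes the corresponding $Q_\Gamma$;
\item Boolean connectives are left unchanged;
\item $\exists y\,\psi$ becomes $\exists \bar x\,(U_\Gamma(\bar x)\wedge\psi_\Gamma)$.
\end{itemize}
The displayed equivalence then follows by routine induction. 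The one point needing care is that the atomic translations respect $\sim_\Gamma$, which holds because $Q_\Gamma$ defines the preimage of a relation on the quotient.

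Second, I will express "$\Gamma$ defines a well-formed $L(\mathbb A)$-structure" by a single first-order $L(\mathbb B)$-sentence $\Phi_\Gamma$. This sentence is the conjunction of the following statements:
\begin{itemize}
\item $U_\Gamma$ is nonempty;
\item $E_\Gamma$ is an equivalence relation on $U_\Gamma(B^n)$;
\item each $Q_\Gamma$ is $E_\Gamma$-invariant in all arguments and is contained in $U_\Gamma^{t_Q}$;
\item for each function symbol $f$, $f_\Gamma$ defines a total single-valued function modulo $E_\Gamma$ (for all $\bar x_1,\ldots,\bar x_{n_f}$ in $U_\Gamma$ there exists $\bar x$ with $f_\Gamma$, unique up to $E_\Gamma$);
\item similarly, each $c_\Gamma$ is nonempty and contained in a single $E_\Gamma$-class.
\end{itemize}
The language $L(\mathbb A)$ may be infinite. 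In that case $\Phi_\Gamma$ becomes a set of sentences, and I will argue with each sentence separately, which is enough. This step is the main obstacle, although only conceptually: one must notice that $\Gamma(\widetilde{\mathbb B})$ is not even well defined a priori, and that its well-definedness is first-order expressible and hence transfers.

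Finally, I will conclude by transfer. Since $\mathbb B\models\Phi_\Gamma$ and $\widetilde{\mathbb B}\equiv\mathbb B$, we get $\widetilde{\mathbb B}\models\Phi_\Gamma$, so $\Gamma(\widetilde{\mathbb B})$ is a well-defined $L(\mathbb A)$-structure. Now take any $L(\mathbb A)$-sentence $\varphi$. Then
\[
\mathbb A\models\varphi \iff \Gamma(\mathbb B)\models\varphi \iff \mathbb B\models\varphi_\Gamma \iff \widetilde{\mathbb B}\models\varphi_\Gamma \iff \Gamma(\widetilde{\mathbb B})\models\varphi.
\]
The first step uses the isomorphism $\bar\mu_\Gamma$, the second and fourth use the translation lemma, and the third uses $\widetilde{\mathbb B}\equiv\mathbb B$ together with the fact that $\varphi_\Gamma$ has no parameters, because $\Gamma$ is a $0$-interpretation. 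Hence $\Gamma(\widetilde{\mathbb B})\equiv\mathbb A$.
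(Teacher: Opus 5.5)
The paper gives no proof of its own here; it only cites Lemma 4.4(2) of \cite{KharlampovichMyasnikovSohrabi:2021}. Your translation argument is the standard proof behind that lemma, and it is correct and complete. You rightly isolate the two points that matter. First, the well-definedness of $\Gamma(\widetilde{\mathbb B})$ must itself be transferred via $\Phi_\Gamma$. Second, the parameter-freeness of the code is what makes $\varphi_\Gamma$ a sentence that survives $\widetilde{\mathbb B}\equiv\mathbb B$.

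One convention point affects your claim that $\mathbb B\models\Phi_\Gamma$. Your invariance clause holds in $\mathbb B$ because the paper takes each $Q_\Gamma$, including $c_\Gamma$, to define the full preimage of the corresponding graph. If a code only picked out representatives, you would first replace each $Q_\Gamma$ by its $E_\Gamma$-saturation; this is again $0$-definable, so nothing else in your argument changes.
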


\begin{remark}\label{re:relax}
Although outside of the scope of this paper, Theorem~\ref{th:equiv0} can be stated in greater generality, by relaxing the requirement of an absolute interpretation to a regular interpretation. We refer the reader to~\cite{Daniyarova-Myasnikov:I} for definitions and details.
\end{remark}

\begin{theorem} [Uniqueness of nonstandard models, \cite{Daniyarova-Myasnikov:I}]\label{th:equiv2}
Let a finitely generated structure $\mathbb A$ in a finite signature be absolutely interpretable in $\mathbb Z$ in two ways, as $\Gamma_1(\mathbb Z)$ and $\Gamma_2(\mathbb Z)$. Then there exists a formula $\theta(\bar x_1,\bar x_2)$, $|\bar x_i|=\dim\Gamma_i$, that defines an isomorphism $\Gamma_1(\mathbb Z)\to\Gamma_2(\mathbb Z)$. Moreover, if $\widetilde{\mathbb Z} \equiv \mathbb Z$, then $\theta(\bar x_1,\bar x_2)$ defines an isomorphism $\Gamma_1(\widetilde{\mathbb Z})\to\Gamma_2(\widetilde{\mathbb Z})$.
\end{theorem}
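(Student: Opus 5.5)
The plan is to write down the isomorphism between the two interpretations explicitly, using a finite generating set of $\mathbb A$, and to show it is arithmetically definable by a formula $\theta$ in $\mathbb Z$. Then the facts that $\theta$ defines an isomorphism will be packaged into a single first-order sentence of ring theory, which transfers to any $\widetilde{\mathbb Z}\equiv\mathbb Z$.

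Fix coordinate maps $\mu_i\colon A_{\Gamma_i}\to A$ and a finite generating set $a_1,\dots,a_k$ of $\mathbb A$. Every element of $A$ is the value of some term $t(a_1,\dots,a_k)$ in the finite signature $L(\mathbb A)$. Terms can be coded by natural numbers. The key step is that the relation ``$\bar x$ is a $\Gamma_i$-tuple representing the value of the term coded by $m$'' is definable in $\mathbb Z$, since it is a recursive evaluation. The evaluation of a coded term is witnessed by a finite sequence of tuples in $A_{\Gamma_i}$, one per subterm, such that:
\begin{enumerate}[1)]
\item leaves are assigned fixed tuples $\bar b^{(i)}_j$ with $\mu_i(\bar b^{(i)}_j)=a_j$ (these are integer tuples, hence $0$-definable in $\mathbb Z$), or tuples satisfying $c_{\Gamma_i}$ for constants;
\item each internal node labelled by $f$ is related to its children via $f_{\Gamma_i}$.
\end{enumerate}
Finite sequences of integers are quantifiable in $\mathbb Z$ through G\"odel's $\beta$-function, since $\mathbb N$ is definable in $\mathbb Z$ by Lagrange's four-squares theorem. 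This gives a formula $\mathrm{Val}_i(m,\bar x)$. Set
\[
\theta(\bar x_1,\bar x_2)\;:=\;\exists m\,\bigl(\mathrm{Val}_1(m,\bar x_1)\wedge \mathrm{Val}_2(m,\bar x_2)\bigr).
\]
In $\mathbb Z$, $\theta(\bar x_1,\bar x_2)$ holds if and only if $\mu_1(\bar x_1)=\mu_2(\bar x_2)$. The forward direction holds because both tuples represent the same term value. The converse holds because $\mathbb A$ is generated by the $a_j$, so every element is a term value. Hence $\theta$ is the graph of the isomorphism $\bar\mu_2^{-1}\bar\mu_1$ on equivalence classes.

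For the second part, consider the ring sentence $\Phi$ saying that $\theta$ induces an isomorphism $\Gamma_1\to\Gamma_2$:
\begin{enumerate}[1)]
\item $\theta\subseteq A_{\Gamma_1}\times A_{\Gamma_2}$, and $\theta$ is compatible with $E_{\Gamma_1}$, $E_{\Gamma_2}$ in both directions, so it is well defined and injective on classes;
\item $\theta$ is total and surjective on classes;
\item for each symbol $Q\in L(\mathbb A)$, $\theta$ preserves and reflects $Q_{\Gamma_1}$ versus $Q_{\Gamma_2}$.
\end{enumerate}
Because the signature is finite, $\Phi$ is a single first-order sentence. It holds in $\mathbb Z$ by the first part, so it holds in every $\widetilde{\mathbb Z}\equiv\mathbb Z$. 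This means the same $\theta$ defines an isomorphism $\Gamma_1(\widetilde{\mathbb Z})\to\Gamma_2(\widetilde{\mathbb Z})$. Note that in $\widetilde{\mathbb Z}$ the witness $m$ may be a nonstandard ``term''; this is harmless, since only $\Phi$ is transferred.

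The main obstacle I expect is the careful construction of $\mathrm{Val}_i$: arithmetizing term trees, handling the $E_{\Gamma_i}$-equivalence, where evaluation at a node yields any tuple in the right class, and ensuring the definition is uniform and parameter-free. Finite generation and finiteness of the signature are used exactly here and in making $\Phi$ a single sentence.
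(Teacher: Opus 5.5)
Your argument is correct. The paper gives no proof of its own here and only cites Daniyarova--Myasnikov. Your route is the standard argument for this fact: a parameter-free $\theta$ built from definable evaluation of coded terms in the generators, which defines $\bar\mu_2^{-1}\bar\mu_1$ in $\mathbb Z$, followed by transfer of the single sentence $\Phi$ to $\widetilde{\mathbb Z}$.

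One detail needs tidying: the leaves and constants. Let a leaf take any tuple $E_{\Gamma_i}$-equivalent to $\bar b^{(i)}_j$, rather than $\bar b^{(i)}_j$ itself. Otherwise, an element that is the value only of a bare generator term (e.g.\ $0$ in $\langle\mathbb N;s\rangle$ generated by $0$) is represented in $\theta$ only by that fixed tuple. Then $\theta$ need not be $E$-saturated, and clause 1) of $\Phi$ can fail in $\mathbb Z$.
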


If $\MA$ is as in Theorem~\ref{th:equiv2} then the structure $\MA(\widetilde{\mathbb Z})$ is called the {\em nonstandard model of $\MA$ with respect to $\widetilde{\mathbb Z}$}. If $\MA \cong \Gamma(\Z)$ is an interpretation of $\MA$ in $\Z$ then $\MA(\widetilde{\mathbb Z}) \cong \Gamma(\widetilde{\mathbb Z})$ and the algebraic structure of $\MA(\widetilde{\mathbb Z})$ is revealed via the interpretation~$\Gamma$.

\subsection{Bi-interpretation}\label{se:bi-int}

\begin{definition}\label{def:reg}
Algebraic structures $\mathbb A$ and $\mathbb B$ are called {\em absolutely bi-interpretable} if 
\begin{enumerate}[1)]
\item there exist absolute interpretations $\mathbb A\cong \Gamma(\mathbb B)$ and $\mathbb B\cong \Delta(\mathbb A)$;
\item there exists a formula $\theta_\mathbb A(\bar u, x)$ in $L(\mathbb A)$, where $|\bar u|={\dim\Gamma\cdot\dim\Delta}$,  that defines some coordinate map $U_{\Gamma\circ\Delta}(\mathbb A)\to A$;
\item there exists formula $\theta_\mathbb B(\bar u, x)$ in $L(\mathbb B)$, where $|\bar u|={\dim\Gamma\cdot\dim\Delta}$, that defines some coordinate map $U_{\Delta\circ\Gamma}(\mathbb B)\to B$. 
\end{enumerate}
\end{definition}

Absolute bi-interpretability allows us to describe nonstandard models of one algebraic structure by means of another one. 

\begin{theorem}[\cite{Daniyarova-Myasnikov:I}]\label{th:equiv1}
Let $\mathbb A$ and $\mathbb B$ be absolutely bi-interpretable in each other, so $\mathbb A \cong \Gamma(\mathbb B)$ and $\mathbb B\cong\Delta(\mathbb A)$. Then
\begin{enumerate}[1)]
\item For any $\widetilde{\mathbb B}\equiv\mathbb B$ the algebraic structure $\Gamma(\widetilde{\mathbb B})$ is elementarily equivalent to $\mathbb A$;
\item Every $L(\mathbb A)$-structure $\widetilde{\mathbb A}$ elementarily equivalent to $\mathbb A$ is isomorphic to $\Gamma(\widetilde{\mathbb B})$ for some $\widetilde{\mathbb B}\equiv\mathbb B$;
\item For any $\mathbb B_1\equiv\mathbb B\equiv\mathbb B_2$ one has $\Gamma(\mathbb B_1) \cong \Gamma(\mathbb B_2) \iff \mathbb B_1 \cong \mathbb B_2.$
\end{enumerate}
\end{theorem}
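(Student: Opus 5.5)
Part 1) is Theorem~\ref{th:equiv0} applied to $\mathbb A\cong\Gamma(\mathbb B)$; it uses only the interpretation $\Gamma$. The rest rests on translating formulas through an interpretation. For every $L(\mathbb A)$-formula $\varphi(x_1,\dots,x_k)$ there is an $L(\mathbb B)$-formula $\varphi_\Gamma(\bar x_1,\dots,\bar x_k)$ such that, for every $\widetilde{\mathbb B}$, $\Gamma(\widetilde{\mathbb B})\models\varphi([\bar b_1],\dots,[\bar b_k])$ iff $\widetilde{\mathbb B}\models\varphi_\Gamma(\bar b_1,\dots,\bar b_k)$. The same holds for $\Delta$. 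The key observation is that all correctness conditions are single first-order sentences, so they transfer to elementarily equivalent structures. These conditions are that $U,E,Q$ give a well-defined structure and that $\theta_{\mathbb A},\theta_{\mathbb B}$ define isomorphisms.

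For 2), I take $\widetilde{\mathbb A}\equiv\mathbb A$ and set $\widetilde{\mathbb B}=\Delta(\widetilde{\mathbb A})$. Step one is to show $\widetilde{\mathbb B}\equiv\mathbb B$. For an $L(\mathbb B)$-sentence $\psi$, $\mathbb B\models\psi$ iff $\mathbb A\models\psi_\Delta$, iff $\widetilde{\mathbb A}\models\psi_\Delta$, iff $\Delta(\widetilde{\mathbb A})\models\psi$. Here I also need that $\Delta$ yields a genuine $L(\mathbb B)$-structure on $\widetilde{\mathbb A}$: $E_\Delta$ is an equivalence relation, the function graphs are total and single-valued modulo $E_\Delta$, and so on. These are first-order sentences true in $\mathbb A$, hence in $\widetilde{\mathbb A}$.

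Step two is to show $\Gamma(\widetilde{\mathbb B})=(\Gamma\circ\Delta)(\widetilde{\mathbb A})\cong\widetilde{\mathbb A}$. In $\mathbb A$, the formula $\theta_{\mathbb A}(\bar u,x)$ defines a map from $U_{\Gamma\circ\Delta}(\mathbb A)$ onto $A$. This map is constant on $E_{\Gamma\circ\Delta}$-classes and injective modulo them, and it carries each relation $Q_{\Gamma\circ\Delta}$ onto $Q$, for every symbol $Q$. Each of these properties is one $L(\mathbb A)$-sentence. If $L(\mathbb A)$ is infinite, I handle each symbol by its own sentence; no finiteness is needed. All these sentences hold in $\widetilde{\mathbb A}$, so $\theta_{\mathbb A}$ defines an isomorphism $(\Gamma\circ\Delta)(\widetilde{\mathbb A})\to\widetilde{\mathbb A}$. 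The main obstacle is bookkeeping: checking that composing interpretations commutes with passing to $\widetilde{\mathbb A}$, i.e.\ that $\Gamma(\Delta(\widetilde{\mathbb A}))$ is naturally the same as $(\Gamma\circ\Delta)(\widetilde{\mathbb A})$ with the composed code. I would verify this by writing the composed code explicitly, substituting the $\Delta$-translations into $U_\Gamma,E_\Gamma,Q_\Gamma$.

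For 3), the direction $\Leftarrow$ is immediate, since an isomorphism $\mathbb B_1\cong\mathbb B_2$ preserves the defining formulas and so induces $\Gamma(\mathbb B_1)\cong\Gamma(\mathbb B_2)$. For $\Rightarrow$, the argument of step two with roles swapped, using $\theta_{\mathbb B}$, gives $\Delta(\Gamma(\mathbb B_i))\cong\mathbb B_i$ for every $\mathbb B_i\equiv\mathbb B$. An isomorphism $\Gamma(\mathbb B_1)\cong\Gamma(\mathbb B_2)$ induces $\Delta(\Gamma(\mathbb B_1))\cong\Delta(\Gamma(\mathbb B_2))$, and therefore $\mathbb B_1\cong\mathbb B_2$.
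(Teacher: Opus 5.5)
Your proposal is correct. It is essentially the standard transfer argument behind this result, which the paper does not prove but quotes from Daniyarova--Myasnikov: in 2) you take $\widetilde{\mathbb B}=\Delta(\widetilde{\mathbb A})$ and transfer the first-order sentences saying that $\theta_{\mathbb A}$ defines a coordinate map $U_{\Gamma\circ\Delta}\to A$, and in 3) you do the same with $\theta_{\mathbb B}$, using that isomorphisms preserve $0$-definable sets. The only real work is the bookkeeping you flag, namely identifying $\Gamma(\Delta(\widetilde{\mathbb A}))$ with $(\Gamma\circ\Delta)(\widetilde{\mathbb A})$ uniformly in $\widetilde{\mathbb A}$; absoluteness (no parameters) is what makes every transferred condition a genuine sentence.
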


\begin{remark}\label{re:relax-bi}
Similarly to Remark~\ref{re:relax}, Theorem~\ref{th:equiv1} can be stated in greater generality by relaxing the requirement of a bi-interpretation to a regularly invertible regular interpretation. We refer the reader to~\cite{Daniyarova-Myasnikov:I} for definitions and details.
\end{remark}

\section{Nonstandard tuples}\label{se:tuples}

\subsection{Nonstandard models of arithmetic}\label{se:nonstandard_arithmetic}

Here we introduce some well-known definitions and facts on nonstandard arithmetic. As a general reference we follow the book \cite{Kaye:1991}.

Let $\mathcal{L}_r = \{+,\cdot, 0,1\}$ be the language (signature) of rings with unity 1. By $\N = \langle N; +,\cdot,0,1\rangle$ we denote the \emph{standard arithmetic}, i.e., the set on non-negative integers $N$ with the standard addition $+$, standard multiplication $\cdot$, and constants $0, 1$. Sometimes, following common practice, we abuse the notation and denote the set $N$ as $\N$. Observe that $\langle N; +,\cdot,0,1\rangle$ is a semiring and $\langle N; +, 0\rangle$ is a commutative semigroup.

Usually by an arithmetic one understands any structure $\mathcal M$ in the signature $\mathcal{L}_r$ that satisfies Peano axioms.
However, for the purposes of the present paper, by a \emph{ model of arithmetic} we mean a structure $\nstdN$ in the signature $\mathcal{L}_r$ that is elementarily equivalent to $\mathbb N$, $\nstdN \equiv \N$. A model $\nstdM$ of arithmetic is called \emph{nonstandard} if it is not isomorphic to $\mathbb N$.
Further, notice that $\mathbb N$ is the only model of arithmetic finitely generated as an additive semigroup or as a semiring. In an equivalent approach one may use the ring of integers $\mathbb Z$ as the arithmetic, in this case nonstandard models of $\mathbb{Z}$ are exactly the rings $\widetilde{\mathbb{Z}}$ which are elementarily equivalent to $\mathbb{Z}$, but not isomorphic to $\Z$. Again, $\Z$ is the only (up to isomorphism) finitely generated ring which is elementarily equivalent to $\Z$.
These both approaches are indeed equivalent from the model theory viewpoint since $\N$ and $\Z$ are absolutely bi-interpretable in each other (in fact, $\mathbb{N}$ is definable in $\mathbb{Z}$ by Lagrange's four-square theorem). 

Every nonstandard model $\nstdM$ of $\mathbb N$, in terms of order, can be described as follows. Recall that for ordered disjoint sets $A$ and $B$ by $A + B$ we denote the set $A \cup B$ with the given orders on $A$ and $B$ and $a < b$ for every $a \in A$ and $b \in B$. Also by $A\cdot B$ we denote the Cartesian product $A\times B = \{(a,b) \mid a \in A, b \in B\}$ with the left lexicographical order, i.e., $(a_1,b_1) < (a_2,b_2)$ if and only if either $a_1 < a_2$ or $a_1 = a_2$ and $b_1 < b_2$. Then $\nstdM$, as a linear order, has the form $\mathbb N+Q_\lambda \mathbb Z$, for some dense linear order $Q_\lambda$ without endpoints of cardinality $\lambda = |\nstdN|$, that is $\mathbb N$ followed by $Q_\lambda$ copies of $\mathbb Z$, as shown in Fig.~\ref{fig:hairbrush}. Note that the ordered set $Q_\lambda$ is uniquely determined by $\nstdN$.
\begin{figure}[thb]
 \centering
 \includegraphics[width=0.5\linewidth]{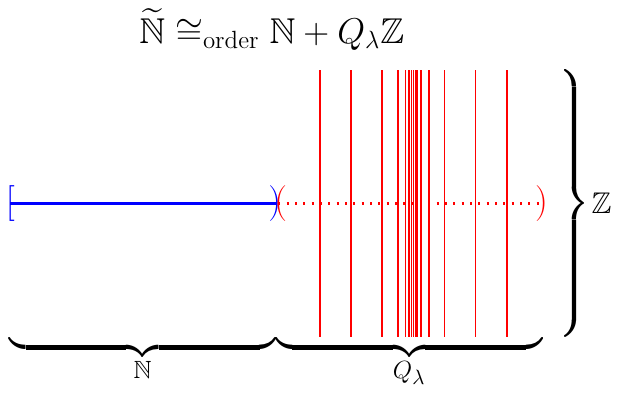}
 \caption{Nonstandard model of arithmetic.}
 \label{fig:hairbrush}
\end{figure}
In particular, any nonstandard countable model of arithmetic has the form $\mathbb N+\mathbb Q\mathbb Z$, since all countable dense linear ordered sets without endpoints are isomorphic.

In the countable case, while the order structure of $\nstdN$ is straightforward, each of the operations $+$ and $\cdot$ on $\nstdN$ is necessarily non-recursive~\cite{Tennenbaum}, so countable nonstandard models of arithmetic $\nstdN$ are non-constructible or non-recursive in the sense of Malcev~\cite{Maltsev:61} or Rabin~\cite{Rabin:60}, respectively.

One can identify a natural number $n \in \N$ with a nonstandard number $\tilde n = \tilde 1 + \ldots + \tilde 1$, which is the sum of $n$ nonstandard units $\tilde 1$ in $\nstdN$. The map $n \to \tilde n$ gives an elementary embedding $\N \to \nstdN$, i.e., with respect to this identification $\N$ is an elementary substructure of $\nstdN$. In other words, $\mathbb N$ is a prime model of the the theory $\Th(\N)$. In the sequel we always assume that $\mathbb N\subseteq \nstdM$ via this embedding. Elements of $\N$ in $\nstdN$ are called the \emph{standard natural numbers} in $\nstdN$. They form an initial segment of $\nstdN$ with respect to the order $<$, i.e., if $x\in\nstdM$ and $x<y$ for some $y\in \mathbb N$ then $x\in\mathbb N$. It follows that in every nonstandard model $\nstdM$, there is an element $x\in\nstdM$ s.t. $x>\mathbb N$ (the latter is a shorthand for $\forall y\in\mathbb N\ x>y$).

It is a crucial fact that $\mathbb N$ is not definable in $\nstdN$ (even by a formula with parameters from $\nstdN$). It follows from the Peano induction axiom (in fact, a scheme of axioms). Another useful observation is that the respective nonstandard ring of integers $\widetilde{Z}$ has additive group $\widetilde{Z}^+$ that is not cyclic, nor finitely generated.

\subsection{List superstructure}\label{se:superstructure} Recall the definition of a list superstructure of a structure $\mathbb A = \langle A;L \rangle$ (see \cite{Ashaev-Belyaev-Myasnikov:1993},\cite{Bauval:1985} \cite{KharlampovichMyasnikov:2018a}) with some small, mostly notational, changes. Let $O$ be a linear ordering with a fixed element $1 \in O$. For any $k \in O$ with $k \geq 1$ put $[1,k] = \{x \in O \mid 1\leq x \leq k\}$. Let $\List(A,O)$ be the set of all functions $s:[1,k]\to A$, where $k \in O, k \geq 1$. We call such functions $O$-\emph{tuples}, or $O$-\emph{lists}, of elements of $\mathbb A$, and write sometimes $s=(s_1,\ldots,s_k)$. If $O = \N$ then $\List(A,\N)$ is precisely the set of all finite tuples of elements of $A$. In this case we sometimes omit $\mathbb N$ from the notation and write $\List(A)$.

Now, the list superstructure $\mathbb S(\MA,\MN)$ is defined as the three-sorted structure 
\[
\mathbb S(\mathbb A,\mathbb N) = \langle \mathbb A, \List(A),\mathbb N; t(s,a,i), l(s)\rangle,
\]
where $\mathbb N$
is the standard arithmetic, $\List(A)$ is as above, $l\colon \List(A) \to \mathbb N$ is the length function, i.e., $l(s)$ is the length $k$ of a tuple $s=(s_1, \ldots,s_{k})\in \List(A)$, and $t(x,y,z)$ is a predicate on $\List(A) \times A\times \mathbb N$ such that $t(s,a,i)$ holds in $\mathbb S(\mathbb A,\mathbb N)$ if and only if $s = (s_1, \ldots,s_{k})\in \List(A), i \in \mathbb N, 1\leq i \leq k$, and $a = s_i \in A$.

Note that usually the list superstructure $\mathbb S(\mathbb A,\mathbb N)$ comes equipped (see the papers \cite{Ashaev-Belyaev-Myasnikov:1993},\cite{Bauval:1985} \cite{KharlampovichMyasnikov:2018a} mentioned above) with one more binary operation $\frown$ in the language, which is interpreted in $\mathbb S(\mathbb A,\mathbb N)$ as the concatenation of tuples, and an extra binary predicate $\in$ on $A\times \List(A)$ such that for $a \in A$ and $s \in \List(A)$, $a \in s$ holds in $\mathbb S(\mathbb A,\mathbb N)$ if and only if $a$ is the component $s_i$ of $s$ for some $i$. However, the predicate $\in$ and the concatenation $\frown$ are $0$-definable in $\mathbb S(\mathbb A,\mathbb N)$ (with the use of $t(s,i,a)$ and $\ell(s)$), so we omit them from the language, but sometimes use them in formulas, as convenient.

\begin{remark}\label{re:two_sorts}
In the case when $\mathbb A=\mathbb N$ one can simplify things by identifying $\mathbb A$ with $\mathbb N$ in $\mathbb S(\mathbb A,\mathbb N) = \langle \mathbb A, \List(A),\mathbb N; t(s,a,i), l(s)\rangle$ and use the two sorted structure $\mathbb S(\mathbb N,\mathbb N) = \langle \mathbb N, \List(\mathbb N); t(s,a,i), l(s)\rangle$ instead of the usual three-sorted one. Note that these structures are absolutely bi-interpretable in each other, so we can interchangeably use both structures. 
\end{remark}


\subsection{Interpretation of the list superstructure in arithmetic}\label{se:list_interpretation}

To define the nonstandard list superstructure in Section~\ref{se:nonstd_list} we need to describe an interpretation of~$\mathbb S(\mathbb N,\mathbb N)$ in~$\mathbb N$, through the well-known G\"odel enumeration.

To that end, we fix a computable enumeration of all finite tuples $\tau: \mathbb N\to \cup_{k=1}^\infty \mathbb N^k$. Denote the $i$-th component of $\tau(n)$ by $\tau(n)_i$:
\[
\tau(n)_i=a\iff \mathbb S(\mathbb N,\mathbb N)\models t(\tau(n),a,i).
\]
Then the partial function $f(n,i)=\tau(n)_i$ is computable, and therefore the set $\{(n,a,i)\mid a= f(n,i)\}\subseteq \mathbb N^3$ is recursively enumerable. Hence, there is a first order formula (in fact, a positive primitive, i.e., a Diophantine formula) $T_\tau(n,a,i)$ in $\mathbb N$ s.t.
\[
\tau(n)_i=a\iff \mathbb N\models T_\tau(n,a,i).
\]
Further, the length $k=\ell(\tau(n))$ of the tuple $\tau(n)$ is also computable, and therefore is given by a formula $L_\tau(n,k)$:
\[
\ell(\tau(n))=k\iff \mathbb N\models L_\tau(n,k).
\]
Intuitively, the formulas $T_\tau$ and $L_\tau$ allow one to think of each $n\in\mathbb N$ as corresponding to a tuple. In particular, $T_\tau$, $L_\tau$ are such that the following statements are true in $\mathbb N$:
\begin{align}
\tag{S1} &\label{eq:tuple1}\forall n\ \exists k\ L_\tau(n,k), \\
\tag{S2} &\label{eq:tuple2}\forall n\ \forall k\ \forall i \ [(L_\tau(n,k)\wedge i>k)\to \forall a\ \neg T_\tau(n,a,i)],\\
\tag{S3} &\label{eq:tuple3}\forall n\ \forall k\ \forall i\ [ (L_\tau(n,k)\wedge i\le k)\to \exists! a\ T_\tau(n,a,i)],\\
\tag{S4} &\label{eq:tuple4}\forall n,m\ [\forall i\ \forall a\ T_\tau(n,a,i)\leftrightarrow T_\tau(m,a,i)]\to m=n
\end{align}
With these formulas in mind, in the sequel we use expressions with $\ell$ and $t$ in first-order formulas with understanding that these expressions are to be replaced with formulas involving $L_\tau$ and $T_\tau$, respectively. For example, in place of~\eqref{eq:tuple3} we may write $\forall n\ \forall i\le \ell(\tau(n))\ \exists! a\ t(\tau(n),a,i)$.
As we mentioned above, concatenation $^\frown$ can be defined through $t$ and $\ell$, and therefore through $T$ and $L$. Namely, the following first-order formula states that for every $x,y\in\mathbb N$ there exists $z\in\mathbb N$ s.t. $\tau(z)=\tau(x)^\frown\tau(y)$:
\begin{equation}\label{eq:concat}
\begin{split}
 &\forall x,y\ \exists z\\
 &\ell(\tau(z))=\ell(\tau(x))+\ell(\tau(y))\\
 &\wedge [\forall a\ \forall i \le \ell(\tau(x))\ T_\tau(z,a,i)\leftrightarrow T_\tau(x,a,i) ]\\
 &\wedge [\forall a\ \forall \ell(\tau(x))< i \le \ell(\tau(x))+\ell(\tau(y))\\
 &T_\tau(z,a,i)\leftrightarrow T_\tau(y,a,i-\ell(\tau(x))) ].
\end{split}
\end{equation}

The next formula holds when and only when $a\in \tau(x)$:
\begin{equation}\label{eq:membership}
 \exists i\ 1\le i\le \ell(x) \wedge T(x,a,i).
\end{equation}

More generally, we can consider arithmetic enumerations $\tau$ (not necessarily the computable ones), that is, when $\tau$ is given by formulas $T$ and $L$ in the language of arithmetic that satisfy~\eqref{eq:tuple1}--\eqref{eq:tuple4}. In that event, for every $n\in\mathbb N$, the truth set of $T(n,a,i)$ defines a tuple corresponding to $n$. To make sure that every tuple in $\List(N,\mathbb N)$ corresponds to some tuple defined by $T(n,a,i)$ we add an infinite system of axioms. Namely, for each $k\in\mathbb N,$ we write
\begin{equation}\label{eq:tuple7}\tag{S5}
\forall a_1,\ldots,a_k\ \exists n\ L(n,k) \wedge \bigwedge_{i=1}^k T(n,a_i,i).
\end{equation}
Note that these formulas are satisfied whenever $L=L_\tau$ and $T=T_\tau$ for some computable enumeration $\tau$ as above.

\begin{definition}\label{de:arithmetic_enumeration}
The pair of formulas $\mathcal E=(T,L)$ that satisfy~\eqref{eq:tuple1}--\eqref{eq:tuple7} are called an \emph{arithmetical enumeration}
of tuples in $\mathbb N$. For an arithmetical enumeration~$\mathcal E$, by~$\mathcal E(n)$ we denote the tuple corresponding to $n\in\mathbb N$ under~$\mathcal E$.
\end{definition}

\begin{lemma}\label{le:arithmetical_enum_welldef}
Let $\mathcal E$ and $\mathcal D$ be two arithmetical enumerations of tuples in $\mathbb N$.
 There exists a definable bijection $f:\mathbb N\to\mathbb N$ s.t. for all $n_1,n_2\in\mathbb N$, $\mathcal E(n_1)=\mathcal D(n_2)$ if and only if $f(n_1)=n_2$.
\end{lemma}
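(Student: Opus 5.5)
The plan is to take $f$ to be the relation defined by the formula
\[
\varphi(n_1,n_2)\;:=\;\exists k\ \bigl[L_{\mathcal E}(n_1,k)\wedge L_{\mathcal D}(n_2,k)\wedge \forall i\,\forall a\,\bigl(1\le i\le k\to (T_{\mathcal E}(n_1,a,i)\leftrightarrow T_{\mathcal D}(n_2,a,i))\bigr)\bigr],
\]
where $\mathcal E=(T_{\mathcal E},L_{\mathcal E})$ and $\mathcal D=(T_{\mathcal D},L_{\mathcal D})$. By construction, $\mathbb N\models\varphi(n_1,n_2)$ exactly when $\mathcal E(n_1)$ and $\mathcal D(n_2)$ have the same length and the same entries, i.e.\ $\mathcal E(n_1)=\mathcal D(n_2)$. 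Here \eqref{eq:tuple1}--\eqref{eq:tuple3} guarantee that the length is well defined and each tuple is a genuine function $[1,k]\to\mathbb N$. So the whole task reduces to showing that $\varphi$ defines the graph of a bijection.

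Totality comes next. Given $n_1$, by \eqref{eq:tuple1} and \eqref{eq:tuple3} the tuple $\mathcal E(n_1)=(a_1,\dots,a_k)$ is an honest finite tuple with standard $k$, since we work in $\mathbb N$. Axiom \eqref{eq:tuple7} for $\mathcal D$, applied with this $k$, gives some $n_2$ with $L_{\mathcal D}(n_2,k)$ and $T_{\mathcal D}(n_2,a_i,i)$ for all $i\le k$. By \eqref{eq:tuple3} these are the only entries, so $\mathcal D(n_2)=\mathcal E(n_1)$.

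For uniqueness, if $\mathcal D(n_2)=\mathcal D(n_2')$ then $T_{\mathcal D}(n_2,\cdot,\cdot)$ and $T_{\mathcal D}(n_2',\cdot,\cdot)$ have the same truth sets, using \eqref{eq:tuple2} for indices beyond the length. Then \eqref{eq:tuple4} gives $n_2=n_2'$. Hence $\varphi$ is the graph of a function $f$.

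Bijectivity then follows by symmetry. Swapping the roles of $\mathcal E$ and $\mathcal D$ gives surjectivity, via \eqref{eq:tuple7} for $\mathcal E$, and injectivity, via \eqref{eq:tuple4} for $\mathcal E$. Finally, $f(n_1)=n_2\iff\varphi(n_1,n_2)\iff\mathcal E(n_1)=\mathcal D(n_2)$, as required.

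The only point needing care, and the main obstacle, is the appeal to \eqref{eq:tuple7}. It is an axiom scheme indexed by standard $k$, so it applies only because in $\mathbb N$ every length is standard. This is exactly why the lemma is stated over $\mathbb N$ rather than over $\nstdN$. I would also check that \eqref{eq:tuple2}--\eqref{eq:tuple3} pin down the truth set of $T$ outside $[1,k]$, at $i=0$ as well, so that \eqref{eq:tuple4} really applies. If $i=0$ is not excluded by the axioms, I would restrict the comparison in \eqref{eq:tuple4}'s hypothesis accordingly or simply note that equal tuples yield equal truth sets on all $i\ge1$.
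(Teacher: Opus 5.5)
Your proposal is correct and follows the same route as the paper. The paper's entire proof is the observation that the relation $\{(n_1,n_2)\mid \mathcal E(n_1)=\mathcal D(n_2)\}$ is definable in $\mathbb N$; you write out the defining formula and use (S1)--(S5) to verify that this relation is the graph of a bijection, which the paper leaves implicit. Your remark that (S5) applies only because every length in $\mathbb N$ is standard is right, and your caveat about the index $i=0$ flags a looseness in how the axioms are literally written (they are evidently intended for $i\ge 1$) rather than a defect in your argument.
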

\begin{proof} To prove the lemma it suffices to note that the set $\{(n_1,n_2)\in\mathbb N^2\mid \mathcal E(n_1)=\mathcal D(n_2)\}$ is definable in $\mathbb{N}$. 
\end{proof}
Now we explain how every arithmetical enumeration $\mathcal E$ defines an absolute interpretation of $\mathbb S(\mathbb N,\mathbb N)$ in $\mathbb N$,
\[
\mathbb S(\mathbb N,\mathbb N)\cong \Gamma_\mathcal E(\mathbb N).
\]


For simplicity we can represent the two-sorted structure $\mathbb S(\mathbb N,\mathbb N)$ (see Remark~\ref{re:two_sorts}) by the usual one-sorted one where the base set is a disjoint union of $\N$ and $\List(\N)$, equipped with the two unary predicates defining the subsets $\N$ and $\List(\N)$, two predicates representing the operations $+$ and $\cdot$ on $\N$, the predicate $t(s,a,i)$, and a predicate representing the length function $l(s)$. 

Now we follow Definition~\ref{de:interpretable} and notation there to describe $\Gamma_\varepsilon$. We interpret $\N \cup \List(\N)$ as subset of $\mathbb N^2$, where $\mathbb N^*=\mathbb N\times \{1\}$ and $\List(\mathbb N)^*=\mathbb N\times\{2\}$ (both are definable subsets of $\N^2$), the equality as the equivalence relation on $\mathbb N^* \cup \List(\N)^*$. The operations $+$ and $\cdot$ are interpreted naturally via the first component of $\N^*$. To interpret the predicate $t(s,a,i)$ on 
$\mathbb N^* \cup \List(\N)^*$ one uses the formula $T(n,a,i)$, and to interpret the function $l(s)$---the formula $L(n,k)$.

Formulas~\eqref{eq:tuple1}--\eqref{eq:tuple7} guarantee that the interpretation $\langle \mathbb N^*,\List(\mathbb N)^*; t^\mathbb N,\ell^\mathbb N\rangle$ is isomorphic to $\mathbb S(\mathbb N,\mathbb N)$. Indeed, the morphism is delivered by formulas \eqref{eq:tuple1}--\eqref{eq:tuple3}, and the bijectivity---by \eqref{eq:tuple4},\eqref{eq:tuple7}. 

If we were so inclined, it would be easy to define $\mathbb N^*$ and $\List(\mathbb N)^*$ in $\mathbb N$ instead of $\mathbb N^2$, say by placing the first sort in even integers, and the second sort in odd integers.

Note that the reverse absolute interpretation $\mathbb N\cong\Delta(\mathbb S(\mathbb N,\mathbb N))$ is constructed straightforwardly by interpreting $\mathbb N$ as the sort $\mathbb N$ of $\mathbb S(\mathbb N,\mathbb N)$.
\begin{proposition}[\cite{Cooper:2017,Rogers:1967}]\label{le:list_bi-int}
In the above notation the following holds:
\begin{itemize} \item [1)] $\mathbb S(\mathbb N,\mathbb N)\cong \Gamma_\mathcal E(\mathbb N)$ and the reverse interpretation $\mathbb N\cong \Delta(\mathbb S(\mathbb N,\mathbb N))$ give an absolute bi-interpretation of $\mathbb S(\mathbb N,\mathbb N)$ and $\mathbb N$ in each other.
\item [2)] Moreover, for two arithmetical enumerations $\mathcal E,\mathcal D$ the definable function $f$ from Lemma~\ref{le:arithmetical_enum_welldef} gives an isomorphism of interpretations $\Gamma_\mathcal E$ and $\Gamma_\mathcal D$.

\end{itemize}

\end{proposition}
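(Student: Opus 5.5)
To prove part 1) I will verify the two conditions of Definition~\ref{def:reg} directly: that the composite interpretations $\Gamma_\mathcal E\circ\Delta$ and $\Delta\circ\Gamma_\mathcal E$ admit definable coordinate maps. Both $\Gamma_\mathcal E$ and $\Delta$ are already known to be absolute interpretations. For $\Gamma_\mathcal E$, the discussion preceding the proposition shows via \eqref{eq:tuple1}--\eqref{eq:tuple7} that $\langle \mathbb N^*,\List(\mathbb N)^*; t^\mathbb N,\ell^\mathbb N\rangle\cong\mathbb S(\mathbb N,\mathbb N)$. For $\Delta$, we simply take the number sort.

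The composite $\Delta\circ\Gamma_\mathcal E$ interprets $\mathbb N$ in $\mathbb N$ as the set $\mathbb N\times\{1\}\subseteq\mathbb N^2$ with the natural operations. The required coordinate map $(n,1)\mapsto n$ is therefore defined by the formula $\theta_\mathbb N(u_1,u_2,x)\equiv (u_2=1\wedge x=u_1)$.

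The composite $\Gamma_\mathcal E\circ\Delta$ interprets $\mathbb S(\mathbb N,\mathbb N)$ inside $\mathbb S(\mathbb N,\mathbb N)$. Its domain consists of pairs $(n,j)$ of elements of the number sort with $j\in\{1,2\}$. Here $(n,1)$ codes the number $n$, and $(n,2)$ codes the list $\mathcal E(n)$. I will take $\theta_\mathbb S(u_1,u_2,x)$ to be the disjunction of two clauses. The first is $u_2=1\wedge x=u_1$, with $x$ of number sort. The second is $u_2=2$ together with the requirement that $x$ is a list with $l(x)=k$, where $k$ is the unique number satisfying $L(u_1,k)$, and $\forall i\,\forall a\,\bigl(t(x,a,i)\leftrightarrow T(u_1,a,i)\bigr)$. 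In this clause, $L$ and $T$ are arithmetic formulas relativized to the number sort of $\mathbb S(\mathbb N,\mathbb N)$, which is legitimate because that sort carries $+,\cdot$.

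By \eqref{eq:tuple3} and \eqref{eq:tuple4}, for each $u_1$ the second clause has exactly one list $x$ satisfying it. This uses extensionality in $\mathbb S(\mathbb N,\mathbb N)$: a list is determined by its length and its entries. By \eqref{eq:tuple7}, every list is hit. Hence $\theta_\mathbb S$ defines a bijection onto the base set. It preserves $t$ and $l$ precisely because $t^\mathbb N$ and $\ell^\mathbb N$ were defined via $T$ and $L$, so it is a coordinate map.

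For part 2), Lemma~\ref{le:arithmetical_enum_welldef} provides a definable $f$ with $\mathcal E(n_1)=\mathcal D(n_2)\iff f(n_1)=n_2$. First I will check that $f$ is a bijection, using \eqref{eq:tuple4} and \eqref{eq:tuple7} for both enumerations. I will then extend $f$ to $\Gamma_\mathcal E(\mathbb N)\to\Gamma_\mathcal D(\mathbb N)$ by $(n,1)\mapsto(n,1)$ and $(n,2)\mapsto(f(n),2)$. This map is clearly definable. It preserves the arithmetic sort trivially, and it preserves $t$ and $\ell$ because $T_\mathcal E(n,a,i)\leftrightarrow T_\mathcal D(f(n),a,i)$ and $L_\mathcal E(n,k)\leftrightarrow L_\mathcal D(f(n),k)$, both of which follow from $\mathcal E(n)=\mathcal D(f(n))$.

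The main obstacle is a subtlety in the definability step. The set $\{(n_1,n_2)\mid\mathcal E(n_1)=\mathcal D(n_2)\}$ must be expressed by a single first-order formula, namely
\[
\exists k\,\bigl(L_\mathcal E(n_1,k)\wedge L_\mathcal D(n_2,k)\bigr)\wedge\forall i\,\forall a\,\bigl(T_\mathcal E(n_1,a,i)\leftrightarrow T_\mathcal D(n_2,a,i)\bigr).
\]
In addition, the infinite scheme \eqref{eq:tuple7} is needed only to establish surjectivity in the standard model, where it is applied one length $k$ at a time.
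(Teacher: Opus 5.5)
Your proposal is correct and follows the same lines as the paper, which gives no separate proof (it cites Cooper and Rogers) but sketches exactly your ingredients: the isomorphism $\Gamma_\mathcal E(\mathbb N)\cong\mathbb S(\mathbb N,\mathbb N)$ via \eqref{eq:tuple1}--\eqref{eq:tuple3} with bijectivity from \eqref{eq:tuple4} and \eqref{eq:tuple7}, the number sort as $\Delta$, and, for the second part, definability of $\{(n_1,n_2)\mid\mathcal E(n_1)=\mathcal D(n_2)\}$. Your explicit formulas $\theta_\mathbb N$ and $\theta_\mathbb S$ usefully supply the coordinate-map conditions of Definition~\ref{def:reg} that the paper leaves implicit; the only small fix is one of attribution: for fixed $u_1$, uniqueness of $k$ comes from \eqref{eq:tuple2}--\eqref{eq:tuple3} and uniqueness of $x$ from extensionality, while \eqref{eq:tuple4} is what makes $u_1\mapsto x$ injective.
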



In the sequel we will need tuples of tuples in $\mathbb N$, that is functions $[1,k]\to \List(\mathbb N,\mathbb N)$. An arithmetical enumeration $\mathcal E$ suffices to interpret $\mathbb S(\mathbb S(\mathbb N,\mathbb N),\mathbb N)$ in $\mathbb N$. Indeed, for each $n\in\mathbb N$, we consider the corresponding tuple $\mathcal E(n)=(n_1,\ldots,n_k)\in\mathbb S(\mathbb N,\mathbb N)$. To the same $n$, we then associate the tuple of tuples $(\mathcal E(n_i),\ldots,\mathcal E(n_k))$. We note that the length of $i$th tuple $\ell(\mathcal E(n_1))$ and $j$the component of $i$th tuple $t(\mathcal E(n_i),a,j)$ are definable using the original formulas $T$ and $L$. The following statement follows from this observation.

\begin{proposition}\label{le:tuples_tuples}
Let $\mathcal E$ be an arithmetical enumeration of tuples in $\mathbb{N}$. Then 
\begin{enumerate}[(a)]
    \item For every absolute interpretation $\Delta$ of a structure $\mathbb A$ in $\mathbb N$, there is a corresponding absolute interpretation $\Delta_\mathcal E$ of $\mathbb S(\mathbb A,\mathbb N)$ in $\mathbb N$.
    \item If, additionally, $\Delta$ gives rise to an absolute bi-interpretation, then so does $\Delta_\mathcal E$.
    \item If, additionally, $\mathcal D$ is an arithmetical enumeration of tuples, then $\Delta_\mathcal E$ and $\Delta_\mathcal D$ are definably isomorphic.
\end{enumerate}
\end{proposition}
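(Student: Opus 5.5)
For part (a), I would build $\Delta_\mathcal E$ by composing $\Delta$ with the arithmetical enumeration. Let $\Delta=\{U_\Delta,E_\Delta,Q_\Delta\}$ have dimension $m$. Represent the sort $A$ by $U_\Delta\subseteq\mathbb N^m$ modulo $E_\Delta$, and the sort $\mathbb N$ by $\mathbb N$ itself, as in $\Gamma_\mathcal E$.

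For the list sort, a code $n\in\mathbb N$ is admissible if $\mathcal E(n)=(n_1,\ldots,n_{km})$ has length divisible by $m$ and each block $(n_{(j-1)m+1},\ldots,n_{jm})$ satisfies $U_\Delta$. Equivalently, one could use $\mathcal E(n)=(n_1,\ldots,n_k)$ with every $\mathcal E(n_j)$ an $m$-tuple in $U_\Delta$, following the tuples-of-tuples remark preceding the statement. Both conditions are first-order, since they quantify only over indices up to the definable length and use $T$ and $L$. Two codes are equivalent if they have the same number of blocks and corresponding blocks are $E_\Delta$-equivalent; this is again definable. The length is the number of blocks, and $t(s,a,i)$ holds iff the $i$th block of $s$ is $E_\Delta$-equivalent to the tuple representing $a$. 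The sorts are separated by a tag coordinate, as in $\Gamma_\mathcal E$, padding to a common dimension.

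Verifying that this gives $\mathbb S(\mathbb A,\mathbb N)$ uses (S1)--(S3) for well-definedness. It uses (S5) for surjectivity: every finite list of elements of $A$ lifts to a finite tuple of representatives, which is coded by some $n$. Injectivity modulo $\sim$ is built into the equivalence relation. The coordinate map is $\mu_\Delta$ applied blockwise.

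For part (b), I would take the reverse interpretation of $\mathbb N$ in $\mathbb S(\mathbb A,\mathbb N)$ as its $\mathbb N$-sort. Then $\mathbb N\to\mathbb S(\mathbb A,\mathbb N)\to\mathbb N$ is, on the $\mathbb N$-sort, essentially the identity, so the coordinate map is definable in $\mathbb N$. The harder direction is showing that the coordinate map $U_{\Delta_\mathcal E\circ\text{rev}}(\mathbb S(\mathbb A,\mathbb N))\to \mathbb S(\mathbb A,\mathbb N)$ is definable in $\mathbb S(\mathbb A,\mathbb N)$. This is the main obstacle.

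On the $A$-sort, I would use the given definable coordinate map $\theta_\mathbb A$ from the bi-interpretation of $\mathbb A$ and $\mathbb N$. This requires noting that $\mathbb A$ is a sort of $\mathbb S(\mathbb A,\mathbb N)$ and that the interpretation of $\mathbb N$ in $\mathbb A$ is replaced by the native $\mathbb N$-sort. The two must be matched through the definable isomorphism $\theta$ between $\Delta$'s copy of $\mathbb N$ in $\mathbb A$ and the $\mathbb N$-sort. I expect this isomorphism to be definable in $\mathbb S(\mathbb A,\mathbb N)$ by the standard trick of using lists to express recursion: $k\mapsto k$-th iterate of successor in $\mathbb A$'s copy of $\mathbb N$ is witnessed by a list of length $k$.

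On the list sort, a code $n$ corresponds to the list $s$ exactly when $\ell(s)$ equals the number of blocks of $n$, and for every $i$ the $i$th block maps under the $A$-coordinate map to $t$-component $i$ of $s$. This is expressible because $T$ and $L$ are formulas in the $\mathbb N$-sort, and the quantification over $i$ is bounded by $\ell(s)$.

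For part (c), I would apply Lemma~\ref{le:arithmetical_enum_welldef}, or rather its proof. The relation "$\mathcal E(n_1)$ and $\mathcal D(n_2)$ encode $\sim$-equivalent lists of blocks" is definable in $\mathbb N$ by bounded quantification over indices using both pairs of formulas. Together with the identity on the $A$- and $\mathbb N$-sorts, it gives a definable isomorphism $\Delta_\mathcal E(\mathbb N)\to\Delta_\mathcal D(\mathbb N)$. Bijectivity again follows from (S4) and (S5) for both enumerations, exactly as in Proposition~\ref{le:list_bi-int}(2).
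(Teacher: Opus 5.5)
Your proposal is correct and takes essentially the paper's approach. The paper supports this proposition only by the remark that $\mathcal E$ lets each code $n$ stand for the tuple of tuples $(\mathcal E(n_1),\ldots,\mathcal E(n_k))$, with lengths and components definable from $T$ and $L$; this is your second encoding, while your concatenated-blocks version is the one the paper uses later in Proposition~\ref{pr:list_interpretation}. Your treatment of (b) and (c) spells out details the paper leaves implicit: in (b) you match the native $\mathbb N$-sort with the copy of $\mathbb N$ that the reverse interpretation builds in $\mathbb A$ (not ``$\Delta$'s'' copy) by list recursion, where the witnessing lists consist of blocks of elements of $A$, and in (c) you use the definable relation of Lemma~\ref{le:arithmetical_enum_welldef}.
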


\subsection{Nonstandard list superstructure}\label{se:nonstd_list}
Let $\mathcal E=(T,L)$ be an arithmetical enumeration of tuples in $\mathbb N$. Let $\nstdM\equiv\mathbb N$ be a nonstandard model of arithmetic. 

\begin{definition} Let $\mathcal E=(T,L)$ be an arithmetical enumeration of tuples. A function $s:[1,k]\to\nstdM$, $k\in\nstdM$, is called a \emph{nonstandard} (or \emph{definable}) tuple (list) if its graph is the truth set of $T(n,a,i)$ for some $n\in\nstdM$. By $\List_\mathcal E(\nstdM)$ we denote the the set of all such nonstandard tuples in $\nstdN$. 
\end{definition}

Note that $\List_\mathcal E(\mathbb N)=\List(\N,\mathbb N)$ for every arithmetical enumeration~$\mathcal E$. However, at least out of cardinality considerations, $\List_\mathcal E(\nstdN) \neq \List(\nstdN,\nstdN)$.

\begin{definition}
By Theorem~\ref{th:equiv1}, a structure is elementarily equivalent to $\mathbb S(\mathbb N,\mathbb N)$ when and only when it is of the form $\Gamma_\mathcal E(\nstdM)$ for some $\nstdM \equiv \mathbb N$, where $\Gamma_\mathcal E$ is as in Section~\ref{se:list_interpretation}. We denote $\Gamma_\mathcal E(\nstdN)$ by $\widetilde{\mathbb S}(\nstdN,\nstdN)$ and call it the \emph{nonstandard list superstructure of arithmetic $\nstdN$}.
\end{definition}
We will exploit the interpretation $\Gamma_\mathcal E$
and elementary equivalence to understand $\widetilde{\mathbb S}(\nstdN,\nstdN)$ in more specific terms.

The structure $\widetilde{\mathbb S}(\nstdN,\nstdN)$ is two-sorted with two underlying sets: $\nstdN$ and $\List_\mathcal E(\nstdM)$.
According to the interpretation $\Gamma_\mathcal E$, the set $\nstdN$ is interpreted as $\nstdN^* = \nstdN \times \{1\}$ and $\List_\mathcal E(\nstdM)$ as $\List_\mathcal E(\nstdM)^* = \nstdN \times \{2\}$. We can naturally identify $\nstdN^*$ and $\List_\mathcal E(\nstdM)^*$ with two disjoint copies of $\nstdN$.

After this identification each fixed $n \in \List_\mathcal E(\nstdM)^* = \nstdN$ defines uniquely the nonstandard tuple $s:[1,k] \to \nstdN$ where $s(i)=a$ if and only if $\nstdN\models L_\mathcal E(n,k)\wedge T_\mathcal E(n,a,i)$, which is equivalent to 
\[
\widetilde{\mathbb S}(\nstdN,\nstdN)\models [\ell(s)=k]\wedge [\forall 1\le i\le k\ t(s,s(i),i)].
\]
In this event, we denote $s=\mathcal E(n)$ and refer to it as the \emph{$n$th nonstandard tuple in $\nstdN$}. Notice that if $\nstdM$ is countable, then so is $\List_\mathcal E(\nstdM)$, unlike the set of \emph{all} tuples of elements of $\nstdN$ indexed by $1\le i\le n$, $n\in \nstdN$.

As usual, the element $a$ such that $\widetilde{\mathbb S}(\nstdN,\nstdN)\models t(s,a,i)$ is called the $i$th term, or $i$th component, of a nonstandard tuple $s$. To indicate a nonstandard tuple with the first element $a_1$ and the last element $a_k$, $k\in\nstdN$, we write $(a_1,\ndots,a_k)$.
Concatenation of nonstandard tuples is described by formula~\eqref{eq:concat}; membership $a\in (a_1,\ldots,a_k)$ is given by formula \eqref{eq:membership}.

\begin{proposition} Let $\mathcal E$ and $\mathcal F$ be arithmetical enumerations of tuples in $\N$ and $\nstdN \equiv \N$. Then the following holds:
\begin{itemize}
\item [1)] $\List_\mathcal E(\nstdN) = \List_\mathcal F(\nstdN)$ and there is a formula $\phi(x,y)$ in the language of rings $\mathcal L_r$ such that for every $m,n \in \nstdN$ 
\[
\mathcal E(m) = \mathcal F(n) \Longleftrightarrow 
\nstdN \models \phi(m,n)
\]

\item [2)] the interpretations $\Gamma_\mathcal E (\nstdN)$ and $\Gamma_\mathcal F (\nstdN)$ are isomorphic. Moreover, there is a formula $\psi(\bar x, \bar y)$ in the language of rings $\mathcal L_r$ that defines in $\nstdN$ an isomorphism $\Gamma_\mathcal E (\nstdN) \to \Gamma_\mathcal F (\nstdN)$.
\end{itemize}
\end{proposition}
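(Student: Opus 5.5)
The plan is to transfer first-order statements from $\mathbb N$ to $\nstdN$ using $\nstdN\equiv\N$. Everything rests on Lemma~\ref{le:arithmetical_enum_welldef}: the relation ``$\mathcal E(m)=\mathcal F(n)$'' is first-order expressible in $\mathbb N$. Concretely, take
\[
\phi(x,y):= \exists k\,\bigl[L_\mathcal E(x,k)\wedge L_\mathcal F(y,k)\wedge \forall i\,\forall a\,\bigl(T_\mathcal E(x,a,i)\leftrightarrow T_\mathcal F(y,a,i)\bigr)\bigr].
\]
By construction, for any model $\nstdN$ and any $m,n\in\nstdN$, $\nstdN\models\phi(m,n)$ holds exactly when the nonstandard tuples $\mathcal E(m)$ and $\mathcal F(n)$ have the same length and the same graph, i.e.\ when they are equal as functions. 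This uses (S1)--(S3) in $\nstdN$ to ensure that $\mathcal E(m)$ and $\mathcal F(n)$ are well-defined functions on $[1,k]$. These sentences hold in $\nstdN$ because they hold in $\mathbb N$.

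For part 1), note that in $\mathbb N$ the sentence $\forall x\,\exists! y\,\phi(x,y)\wedge\forall y\,\exists! x\,\phi(x,y)$ is true, since $\phi$ defines the bijection $f$ of Lemma~\ref{le:arithmetical_enum_welldef}. By elementary equivalence this sentence also holds in $\nstdN$. Hence every $\mathcal E(m)$ equals some $\mathcal F(n)$ and vice versa, which gives $\List_\mathcal E(\nstdN)=\List_\mathcal F(\nstdN)$. The equivalence $\mathcal E(m)=\mathcal F(n)\iff\nstdN\models\phi(m,n)$ is then exactly the semantic reading of $\phi$ from the previous paragraph.

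For part 2), the map on the list sort is $(m,2)\mapsto(n,2)$ whenever $\phi(m,n)$ holds, and on the number sort it is the identity $(a,1)\mapsto(a,1)$. Define
\[
\psi(\bar x,\bar y):=(x_2=1\wedge y_2=1\wedge x_1=y_1)\vee(x_2=2\wedge y_2=2\wedge\phi(x_1,y_1)).
\]
Here $2$ is the definable constant $1+1$. Part 1 shows that $\psi$ defines a bijection. It remains to check that $\psi$ preserves the predicates $t$ and $\ell$ and the arithmetic operations. The arithmetic operations are preserved trivially, since $\psi$ is the identity on the number sort. For $t$ and $\ell$, we need $T_\mathcal E(m,a,i)\leftrightarrow T_\mathcal F(n,a,i)$ and $L_\mathcal E(m,k)\leftrightarrow L_\mathcal F(n,k)$ whenever $\phi(m,n)$ holds. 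These follow from the definition of $\phi$ together with the uniqueness of length in (S2)/(S3). Alternatively, one can verify them as sentences true in $\mathbb N$, where they follow from Proposition~\ref{le:list_bi-int}(2), and transfer them to $\nstdN$.

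The only delicate point is making sure no standard-model-specific fact sneaks in, such as (S5), which is an infinite scheme and is not needed here. The proof works if every property used is a single first-order sentence true in $\mathbb N$, and I expect that to be the main thing to check carefully; the rest is routine.
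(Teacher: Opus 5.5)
Your proposal is correct and follows the paper's route. The paper's one-line proof simply invokes Lemma~\ref{le:arithmetical_enum_welldef} together with elementary equivalence; you have written out the formula $\phi$, the transferred bijectivity sentence, and the check that $\psi$ respects $t$ and $\ell$. One small correction to your closing remark: (S5) is in fact used, but only inside $\mathbb N$, where (via the lemma) it is what makes $\phi$ total and surjective; what you rightly avoid is transferring it, since the only thing moved to $\nstdN$ is the single sentence asserting that $\phi$ is a bijection.
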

\begin{proof}
Since $\widetilde{\mathbb S}(\nstdM,\nstdM)\equiv\mathbb S(\mathbb N,\mathbb N)$ the result follows from Lemma~\ref{le:arithmetical_enum_welldef}. 
\end{proof}
\begin{remark}
The set $\List_\mathcal E(\nstdN)$ and the interpretation $\Gamma_\mathcal E(\nstdN)$ do not depend (the latter up to a definable isomorphism) on the enumeration $\mathcal E$. We denote them 
$\List(\nstdN)$ and $\widetilde{\mathbb S}(\nstdN,\nstdN)$ respectively. \end{remark}

\begin{remark}
The structure $\widetilde{\mathbb S}(\nstdN,\mathbb N)$
 is \emph{not} elementarily equivalent to $\mathbb S(\mathbb N,\mathbb N)$, since such equivalence would give a way to define $\mathbb N$ in $\nstdN$. For example, $\mathbb N\subseteq \nstdN$ would be defined in $\widetilde{\mathbb S}(\nstdN,\mathbb N)$ by the formula
\[
\mathop{\mathrm{std}}(n)=\exists \mathrm{tuple}\, s\ ( s_1=1\wedge s_{\ell(s)}=n\wedge [\forall 1\le i\le\ell(s)-1\ s_{i+1}=s_{i}+1]).
\]
The sentence $\forall n\, \mathop{\mathrm{std}}(n)$ holds in $\mathbb S(\mathbb N,\mathbb N)$ but not in $\widetilde{\mathbb S}(\nstdN,\mathbb N)$. It follows that none of the nonstandard models $\nstdM$ have the same weak second order theory as $\mathbb N$. See also Section~\ref{se:summation_arb_superstructre} for a related discussion.
\end{remark}

\subsection{Properties of nonstandard tuples}\label{se:nonstd_tuples}

\subsubsection{Nonstandard tuples that are always there}
In this section we describe some tuples that belong to $\List(\nstdN)$ for every $\nstdN \equiv \N$.
\begin{lemma} [Definable functions] The following holds.
\begin{itemize}
\item [1)] Let $f:\N \to \N$ be an arithmetic function, i.e., there is a formula $\phi(x,y)$ of arithmetic such that $\forall m,n \in \N$ 
\[
f(m) = n \Longleftrightarrow \N \models \phi(m,n).
\]
Then for any $\nstdN \equiv \N$ and any $k \in \nstdN$ there is a nonstandard tuple $s = (s_1,\ldots,s_k) \in \List(\nstdN)$ such that $\forall i \leq k$ $\nstdN \models \phi(i,s_i)$.
\item [2)] Conversely, let $\phi(x,y)$ be a formula of arithmetic, such that for some $\nstdN \equiv \N$ and some infinite $k \in \nstdN$ there is a tuple $s \in \List(\nstdN)$ such that $l(s) = k$ and $\nstdN \models \phi(i,s_i)$ for every $i \leq k$. Then $\phi(x,y)$ defines an arithmetic function $f:\N \to \N$. 
\end{itemize}
\end{lemma}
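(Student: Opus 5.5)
The plan is to prove both parts by transfer. A statement about all tuples, which in $\N$ is a statement about codes $n$, becomes a single first-order sentence or a family of sentences in the language of arithmetic through the formulas $T=T_\mathcal E$ and $L=L_\mathcal E$. We then move it between $\N$ and $\nstdN$ using $\nstdN\equiv\N$, and for part 2) also the fact from Section~\ref{se:nonstandard_arithmetic} that $\N$ is an elementary substructure of $\nstdN$.

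For 1), I would consider the sentence
\[
\Phi:\quad \forall k\ \exists n\ \Big( L(n,k)\wedge \forall i\ \big(1\le i\le k \to \exists a\ (T(n,a,i)\wedge \phi(i,a))\big)\Big).
\]
First we check that $\N\models\Phi$. Fix $k\in\N$. By \eqref{eq:tuple7} applied to $a_i=f(i)$, $i=1,\ldots,k$, there is an $n$ with $L(n,k)$ and $T(n,f(i),i)$ for all $i\le k$. Since $\N\models\phi(i,f(i))$, the conjunct inside $\Phi$ holds for this $n$. (Alternatively, cite part 1) of Lemma~\ref{le:arithmetical_enum_welldef}'s setup via Proposition~\ref{le:list_bi-int}.) Since $\nstdN\equiv\N$, we get $\nstdN\models\Phi$. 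Given $k\in\nstdN$, pick a witness $n$ and let $s=\mathcal E(n)$. By \eqref{eq:tuple1}--\eqref{eq:tuple3}, which also transfer to $\nstdN$, $s$ is a well-defined nonstandard tuple of length $k$ whose $i$th term $s_i$ is the unique $a$ with $T(n,a,i)$. Hence $\nstdN\models\phi(i,s_i)$ for all $i\le k$.

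For 2), the key observation is that $k$ is infinite, so every standard $m\ge1$ satisfies $m\le k$, because $\N$ is an initial segment of $\nstdN$. For such $m$ we have $\nstdN\models\phi(m,s_m)$, hence $\nstdN\models\exists y\ \phi(m,y)$. Since $m$ is standard and $\N\preceq\nstdN$, this gives $\N\models\exists y\ \phi(m,y)$. Thus $\phi$ is total on $\N$ (the case $m=0$ is handled by shifting indices, or by the convention that tuples are indexed from $1$).

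The main obstacle is the word ``defines'': totality of $\phi$ on $\N$ is all that the hypothesis can give. If $\phi$ is not single-valued on $\N$, I would pass to the arithmetic function
\[
f(m)=\min\{y\mid \phi(m,y)\},
\]
which is defined by the formula $\phi(x,y)\wedge\forall z<y\ \neg\phi(x,z)$. I would then note that if $\phi$ is already functional on $\N$, then $f$ is exactly the function whose graph is $\phi$. I expect the intended reading to be this one, namely that $\phi$ defines a total function on $\N$, or its least-witness selector. The proof write-up should state explicitly which reading is used.
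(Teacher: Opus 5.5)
Your part 1) is the paper's argument. The paper transfers the sentence ``for every $k$ there is a list $s$ with $\ell(s)=k$ and $\phi(i,s_i)$ for all $i\le k$'' from $\mathbb S(\N,\N)$ to $\widetilde{\mathbb S}(\nstdN,\nstdN)\cong\Gamma_\mathcal E(\nstdN)$. You do the same thing after unfolding the list sort into $T$ and $L$, and you check the sentence in $\N$ with \eqref{eq:tuple7}. One small fix: quantify $\forall k\ge 1$, since lists have length at least $1$ and \eqref{eq:tuple7} only supplies witnesses for such $k$.

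For part 2) the paper gives no argument at all, so your proof fills a gap rather than reproducing one. Your key step is correct: every standard $m\ge 1$ satisfies $m\le k$ because $k$ is infinite, and $\exists y\,\phi(m,y)$ then pulls down along $\N\preceq\nstdN$.

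Your caveat is justified, not overcautious, because the literal statement is false. For example, $\phi(x,y)\equiv(y=y)$ satisfies the hypothesis for every $k$, yet it is not the graph of a function. Likewise $\phi(x,y)\equiv(x\neq 0)$ satisfies the hypothesis while $\phi(0,y)$ has no witness. So state the conclusion for $m\ge 1$; do not say that shifting indices handles $m=0$.

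Your corrected reading is in fact the exact converse of 1). Suppose $\N\models\forall x\ge 1\,\exists y\,\phi(x,y)$. Apply part 1) to the least-witness selector $\phi(x,y)\wedge\forall z<y\,\neg\phi(x,z)$, extended arbitrarily at $x=0$. This produces, in every $\nstdN$ and for every $k$, a tuple satisfying $\phi$ at each $1\le i\le k$. Hence the hypothesis of 2) is equivalent to totality of $\phi$ on $\N\setminus\{0\}$, which confirms your claim that totality is all it can yield.
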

\begin{proof}
Fix an arbitrary arithmetic enumeration $\mathcal E$ of tuples in $\N$.

For any $k \in \N$ there is a tuple $s \in \List(\N) = \List_\mathcal E(\N)$ such that $l(s) = k$ and $\forall i \leq k$ $\N \models \phi(i,s_i)$. It follows that 
\[
S(\nstdN,\nstdN) \models \forall k \in \N \ \exists s \in \List(\nstdN) [\forall i \leq k \ \phi(i,s_i)].
\]
Since 
\[
S(\nstdN,\nstdN) \cong \Gamma_\mathcal E(\nstdN) \equiv \Gamma_\mathcal E(\N) = \List_\mathcal E(\N)
\]
one has that for any $k \in \nstdN$ there is a nonstandard tuple $s \in \List_\mathcal E(\nstdN)$ such that $l(s) = k$ and $\nstdN \models \phi(i,s_i)$ for any $i \leq k$.
\end{proof}

The following sentence holds in $\mathbb S(\mathbb N,\mathbb N)$, hence in $\widetilde{\mathbb S}(\nstdN,\nstdN)$:
\[
\forall k\ \exists s\ \forall a\ \forall i\ t(s,a,i)\iff i\le k \wedge a=i.
\]
Therefore, the function $s$ defined by $s(i)=i$ for $1\le i\le k$ is a nonstandard tuple. We denote this nonstandard tuple by $[1,k]$ or $(1,\ndots,k)$.

Similar reasoning shows that there is a ``constant'' nonstandard tuple $(1,\ldots,1)$ of length $n$ for each $n\in\nstdM$.

For each nonstandard tuple $s$ and each $n\le \ell(s)$ we can consider its restriction to $[1,n]$, or its initial segment $s|_{n}$, defined by $\ell(s|_n)=n$ and $\forall i \forall a\ t(s|_{n},a,i)\leftrightarrow i\le n\wedge t(s,a,i)$.

\subsubsection{Nonstandard permutations}\label{se:nonstd_permutations} We say that a nonstandard tuple $\sigma$ of length $n=\ell(\sigma)\in\nstdN$ is a \emph{nonstandard permutation} of $[1,n]$ if each term of $\sigma$ is at least $1$ and at most $n$, and each $1\le k\le n$ appears exactly once in $\sigma$. The following first-order formula $P(\sigma,n)$ tells if a tuple $\sigma$ is a nonstandard permutation of $[1,n]$:
\begin{equation}\label{eq:permutation}
 \begin{split}
 P(\sigma,n)&=(\ell(\sigma)=n)\\
 &\wedge \forall 1\le k\le n\ \exists! 1\le i\le n\ t(\sigma,k,i)\\
 &\wedge \forall 1\le i\le n\ \exists! 1\le k\le n\ t(\sigma,k,i).
 \end{split}
\end{equation}
Note that every nonstandard permutation gives a first-order definable bijection of $[1,n]$ onto itself.

In the sequel, we often omit the word nonstandard and simply term such $\sigma$ \emph{permutations}. In particular, note that if $\sigma$ is a permutation, it is implied that it is a nonstandard tuple.

For illustration purposes, in the next proposition we show that the behavior we expect in the finite case is observed in the nonstandard case. In particular, note that it follows from this proposition that the second or the third line in~\eqref{eq:permutation} can be omitted.

\begin{proposition}
Suppose $\sigma$ is a nonstandard tuple of length $n\in\nstdN$.
\begin{enumerate}[(a)]
    \item If every $1\le i\le n$ appears in $\sigma$, then $\sigma$ is a permutation.
    \item If all components of $\sigma$ are distinct, and every component is at least $1$ and at most $n$, then $\sigma$ is a permutation.
\end{enumerate}
\end{proposition}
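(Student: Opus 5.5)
The plan is a transfer argument: each statement is the nonstandard image of a first-order sentence that holds in the standard list superstructure $\mathbb S(\mathbb N,\mathbb N)$. Since $\widetilde{\mathbb S}(\nstdN,\nstdN)=\Gamma_\mathcal E(\nstdN)\equiv\Gamma_\mathcal E(\mathbb N)\cong\mathbb S(\mathbb N,\mathbb N)$ by Theorem~\ref{th:equiv0} (or Theorem~\ref{th:equiv1}), any such sentence also holds in $\widetilde{\mathbb S}(\nstdN,\nstdN)$. Applied to the given nonstandard tuple $\sigma$, it yields the claim.

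For (a), I would take the sentence
\[
\forall n\ \forall \sigma\ \Bigl[\bigl(\ell(\sigma)=n \wedge \forall 1\le k\le n\ \exists 1\le i\le n\ t(\sigma,k,i)\bigr)\to P(\sigma,n)\Bigr],
\]
with $P$ the formula~\eqref{eq:permutation}. To see that it holds in $\mathbb S(\mathbb N,\mathbb N)$, fix a finite tuple $\sigma$ of length $n$ in which every $k\in[1,n]$ occurs. The map $i\mapsto\sigma_i$ from $[1,n]$ to $\mathbb N$ has image containing $[1,n]$. Since the domain has only $n$ elements, the image is exactly $[1,n]$. A surjection of a finite set onto itself is a bijection, so each $k$ occurs exactly once. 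The third line of~\eqref{eq:permutation} is automatic, since $t$ is functional in the position argument.

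For (b), I would take
\[
\forall n\ \forall\sigma\ \Bigl[\bigl(\ell(\sigma)=n\wedge \forall 1\le i<j\le n\ \sigma_i\ne\sigma_j \wedge \forall 1\le i\le n\ 1\le \sigma_i\le n\bigr)\to P(\sigma,n)\Bigr].
\]
In the standard model this says that an injection $[1,n]\to[1,n]$ is a bijection, which is the finite pigeonhole principle. All expressions $\sigma_i$ are abbreviations via $t$, using \eqref{eq:tuple3}, so both are genuine first-order sentences of the language of $\mathbb S(\mathbb N,\mathbb N)$. Equivalently, they are sentences of arithmetic after replacing $t,\ell$ by $T,L$.

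The only real point to check is that the conditions in the statement are exactly the truth in $\widetilde{\mathbb S}(\nstdN,\nstdN)$ of the hypotheses of these sentences. This holds by the definition of nonstandard tuples: the components of $\sigma$ are precisely the $a$ with $\widetilde{\mathbb S}(\nstdN,\nstdN)\models t(\sigma,a,i)$, and the quantifiers range over $\nstdN$ and $\List(\nstdN)$. So "every $1\le i\le n$ appears" and "components distinct and in $[1,n]$" are read internally, and no external (non-definable) finiteness is used. This is the main subtlety rather than an obstacle: the pigeonhole principle for infinite $n\in\nstdN$ is false for arbitrary external functions. It holds here only because $\sigma$ is a definable tuple and the principle is transferred as a single first-order sentence.
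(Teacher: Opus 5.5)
Your proposal is correct and is essentially the paper's proof: both write the finite-case implication as a single first-order sentence of $\mathbb S(\mathbb N,\mathbb N)$ and transfer it to $\widetilde{\mathbb S}(\nstdN,\nstdN)$ by elementary equivalence, using the same hypotheses for (a) and (b). One small wording point: the third line of $P(\sigma,n)$ requires each $\sigma_i\in[1,n]$, and this comes from your observation that the image is exactly $[1,n]$, not from functionality of $t$ alone.
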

\begin{proof}
To show (a), note that in the finite case the formula
 \[
 (\ell(\sigma)=n)\; \wedge\; \forall 1\le k\le n\ \exists 1\le i\le n\ t(\sigma,k,i)
 \]
 implies~\eqref{eq:permutation}. By elementary equivalence, the same takes place for $\nstdN$ and nonstandard tuples. The reasoning for (b) is similar, with formula
 \[
 \begin{split}
 (\ell(\sigma)=n)\; &\wedge\; \forall 1\le i\le n\ \exists 1\le k\le n\ t(\sigma,k,i)\\
 &\wedge\; \forall 1\le i,j\le n\ \forall k\ t(\sigma,k,i)\wedge t(\sigma,k,j)\to i=j. 
 \end{split}
 \]
\end{proof}

\subsubsection{Nonstandard sum, product, and exponentiation}\label{se:nonstandard_summation} Using the above reasoning, we can extend sum, product and other definable functions to nonstandard tuples.
\begin{lemma}\label{le:summation}
Let $f:\mathbb N\times\mathbb N\to \mathbb N$ be a definable in $\mathbb N$ function. Then there is a definable in $\widetilde{\mathbb S}(\nstdM,\nstdM)$ function $\widetilde f:\widetilde{\mathbb S}(\nstdM,\nstdM)\to \nstdM$ s.t. $\widetilde f((a_1,a_2))=f(a_1,a_2)$ and $\widetilde f(a_1,\ldots,a_k)=f(\widetilde f((a_1,\ldots,a_{k-1})),a_k)$ for each $k\in\nstdM, k> 2$.
\end{lemma}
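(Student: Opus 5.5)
The plan is to first build $\widetilde f$ on the standard list superstructure $\mathbb S(\mathbb N,\mathbb N)$ by a first-order formula, and then transfer it to $\widetilde{\mathbb S}(\nstdM,\nstdM)$ by elementary equivalence, using the interpretation $\Gamma_\mathcal E$ and Theorem~\ref{th:equiv1} in the same way as in the lemma on definable functions above. Let $\phi(x,y,z)$ be an arithmetic formula defining the graph of $f$.

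In $\mathbb S(\mathbb N,\mathbb N)$, the iterated value is described through an auxiliary tuple of partial results. Let $\Psi(s,v)$ say that $\ell(s)\ge 2$ and there is a tuple $r$ with the following properties:
\begin{itemize}
\item[(i)] $\ell(r)=\ell(s)$;
\item[(ii)] $r_2$ is the unique $z$ with $\phi(s_1,s_2,z)$;
\item[(iii)] for every $2\le i<\ell(s)$ we have $\phi(r_i,s_{i+1},r_{i+1})$;
\item[(iv)] $v=r_{\ell(s)}$.
\end{itemize}
The entry $r_1$ is irrelevant and can be set to $0$. Here the expressions $s_i$ and $r_i$ abbreviate formulas built from $t$ (ultimately from $T_\mathcal E$ and $L_\mathcal E$), so $\Psi$ is a first-order formula of the list superstructure.

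In the standard model, ordinary induction on $\ell(s)$ shows that for every tuple $s$ of length $\ge 2$ there is exactly one such $r$. The existence of $r$ uses axiom~\eqref{eq:tuple7} for each standard length together with totality of $f$. Hence
\[
\mathbb S(\mathbb N,\mathbb N)\models \forall s\,\bigl(\ell(s)\ge 2\to \exists! v\ \Psi(s,v)\bigr),
\]
and the defined function satisfies the two recursion equations. These equations are expressed by the sentences
\[
\forall a_1,a_2\ \forall s\ \bigl((\ell(s)=2\wedge s_1=a_1\wedge s_2=a_2)\to \Psi(s,v)\leftrightarrow \phi(a_1,a_2,v)\bigr)
\]
and
\[
\forall s\,\forall s'\,\bigl(\ell(s)>2\wedge s'=s|_{\ell(s)-1}\to \forall v\,(\Psi(s,v)\leftrightarrow \exists w\,(\Psi(s',w)\wedge\phi(w,s_{\ell(s)},v)))\bigr).
\]
Restriction $s|_n$ is definable, as noted earlier. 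We also record the sentence saying that $\phi$ is the graph of a total function. All of these are first-order sentences true in $\mathbb S(\mathbb N,\mathbb N)$.

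Since $\widetilde{\mathbb S}(\nstdM,\nstdM)=\Gamma_\mathcal E(\nstdM)\equiv\Gamma_\mathcal E(\mathbb N)\cong\mathbb S(\mathbb N,\mathbb N)$, the same sentences hold in $\widetilde{\mathbb S}(\nstdM,\nstdM)$. We then define $\widetilde f(s)=v$ if and only if $\Psi(s,v)$. This is a definable total function on nonstandard tuples of length $\ge 2$, and it satisfies $\widetilde f((a_1,a_2))=f(a_1,a_2)$ and $\widetilde f(a_1,\ldots,a_k)=f(\widetilde f(a_1,\ldots,a_{k-1}),a_k)$ for all $k\in\nstdM$, $k>2$. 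Here $f$ on $\nstdM$ means the function defined by $\phi$ in $\nstdM$.

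The main obstacle is ensuring that the recursion uses only first-order quantification over the tuple sort. No induction may be carried out inside the nonstandard model, since $\mathbb N$ is not definable there. This is handled by quantifying over the witness tuple $r$ and by proving uniqueness and the recursion equations only in the standard model, where induction is available, and then transferring. Once the statements are phrased as sentences over the list superstructure, nothing beyond elementary equivalence is needed.
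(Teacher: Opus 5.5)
Your proposal is correct and is essentially the paper's proof: the paper also defines $\widetilde f(a_1,\ldots,a_k)$ as the last entry of the auxiliary tuple of partial results $s_1=a_1$, $s_{i+1}=f(s_i,a_{i+1})$, and your explicit transfer of existence, uniqueness of the value and the recursion equations from $\mathbb S(\mathbb N,\mathbb N)$ spells out what the paper leaves implicit in calling this map definable. One aside: your closing claim that no induction is available in the nonstandard model is overstated, because induction for first-order formulas with parameters holds in $\nstdM$ and in $\widetilde{\mathbb S}(\nstdM,\nstdM)$ by elementary equivalence, and only induction over non-definable sets such as $\mathbb N$ fails; your argument does not rely on that remark.
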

\begin{proof}
For a tuple $(a_1,\ldots,a_k)\in \List(\nstdN)$, we define a new tuple $s\in \List(\nstdN)$ of length $k$ by
\begin{enumerate}
    \item $s_1=a_1$,
    \item $s_{i+1}=f(s_i,a_{i+1})$, $1\le i\le k-1$.
\end{enumerate}
Then the function $(a_1,\ldots,a_k)\mapsto s_k$ is definable and satisfies the requirement of the lemma.
\end{proof}
We think of $\widetilde f$ as an extension of $f$ to the nonstandard case. For example, this allows summation over nonstandard tuples: for a tuple $s\in \widetilde{\mathbb S}(\nstdN,\nstdN),$ consider the tuple $r$ of the same length $\ell(s)=k$ given by
\begin{enumerate}
    \item $r_1=s_1$,
    \item $r_{i+1}=r_i+s_{i+1}$, $1\le i\le k-1$.
\end{enumerate}
Then the component $r_{k}$ gives the sum $\sum s=\sum_{i\le n} s_i$. We can similarly define $\prod s$. If $a,k\in \nstdN$ we can define $a^k$ as $\prod s$, where $\ell(s)=k$ and $s_i=a$ for each $1\le i\le k$.
It is easy to see (for example, by induction) that sum and product inherit associativity and commutativity in the following sense. If $s,s'$ are nonstandard tuples, then $\sum s+\sum s'=\sum s\frown s'$; if additionally $s'$ is a permutation of $s$, then $\sum s=\sum s'$. The same hold for the product.

As an example of commutativity and associativity, suppose $S_i$, $1\le i\le n$, are nonstandard tuples with $\ell(S_i)=m$ for each $1\le i\le n$, where $m,n\in\nstdM$, and $(i,j)\mapsto (S_i)_j$ is a definable function. Then we can observe that $\sum_i\sum_j (S_i)_j=\sum_j\sum_i (S_i)_j$. Indeed, the former sum by associativity is equal to $\sum_{k\le mn} T_{k}$, where $T_{im-m+j}=(S_i)_j$ for $1\le i\le n$, $1\le j\le m$, and the latter sum is equal to $\sum_{k\le mn}$, where $U_{jn-n+i}=(S_i)_j$ for $1\le i\le n$, $1\le j\le m$. $T$ and $U$ clearly differ by a nonstandard permutation, so by commutativity the two sums are equal.

We can also organize counting via the same approach: if $s$ is a nonstandard tuple of length $k\in\nstdN$, and $a\in \nstdN$, define $r$ as follows:
\begin{enumerate}
    \item If $s_1=a$, then $r_1=1$, and $r_1=0$ otherwise.
    \item For $1\le i\le k-1$, if $s_{i+1}=a$, then $r_{i+1}=r_{i}+1$, and $r_{i+1}=r_{i}$ otherwise.
\end{enumerate}
Then $r_k$ is the (nonstandard) number of occurrences of $a$ in $s$.
Equivalently, if we want to refer to Lemma~\ref{le:summation}, we first map $s\mapsto s'$ where $s'_i=1$ if $s_i=a$ and $s'_i=0$ otherwise (which is a definable mapping), and then take $\sum s'$.


\subsection{Nonstandard tuples over a given structure and weak second-order logic.}\label{se:summation_arb_superstructre}
The notion of nonstandard tuples can be generalized to an arbitrary structure $\mathbb B$, by considering an arbitrary $\mathcal M\equiv \mathbb S(\mathbb B,\mathbb N)$. However, in the present work we are only interested in nonstandard tuples over structures that can be interpreted in $\mathbb N$, for example, finitely presented groups. Below we describe such nonstandard tuples. We refer the reader to~\cite{Myasnikov-Nikolaev:2024} for the treatment of the case of an arbitrary $\mathbb B$.

Suppose $\mathbb A$ is interpretable in $\mathbb N$ (therefore, absolutely interpretable in $\mathbb N$).
Then since $\mathbb S(\mathbb N,\mathbb N)$ is interpretable in $\mathbb N$, so is $\mathbb S(\mathbb A, \mathbb N)$, say, $\mathbb S(\mathbb A, \mathbb N)\cong \Gamma(\mathbb N)$.
The list superstructure being 3-sorted, $\Gamma$ includes interpretation $\Gamma|_{\mathbb A}$ of $\mathbb A$ in $\mathbb N$ and interpretation $\Gamma|_{\mathbb N}$ of $\mathbb N$ in $\mathbb N$.
Then for every $\nstdN\equiv\mathbb N$, we consider $\Gamma(\nstdN)$. We denote $\widetilde{\mathbb A}=\Gamma|_{\mathbb A}(\mathbb A)$, observe $\nstdN\cong \Gamma|_{\mathbb N}(\nstdN)$, and denote $\Gamma(\nstdN)=\widetilde{\mathbb S}(\widetilde{\mathbb A},\nstdN)$.
Notice $\widetilde{\mathbb A}\equiv\mathbb A$ and $\widetilde{\mathbb S}(\widetilde{\mathbb A},\nstdN)\equiv \mathbb S(\mathbb A,\mathbb N)$ by Theorem~\ref{th:equiv0}. The structure $\widetilde{\mathbb S}(\widetilde{\mathbb A},\nstdN)$ is called a \emph{nonstandard list superstructure} of $\mathbb A$. For shortness, denote $\mathcal M=\widetilde{\mathbb S}(\widetilde{\mathbb A},\nstdN)$.

The nonstandard list superstructure $\mathcal M$ is three-sorted, with sorts $\widetilde{\mathbb A}\equiv \mathbb A$, $\nstdM\equiv \mathbb N$, and a sort $\widetilde S$ corresponding to $\List(\mathbb A,\mathbb N)$.
We refer to these sorts as the \emph{structure sort}, \emph{number sort}, and \emph{list sort} of $\mathcal M$, respectively.
We refer to the elements of the list sort $\widetilde{S}$ of $\mathcal M$ as \emph{nonstandard tuples over $\mathbb A$} and we treat them similarly to the nonstandard tuples in $\nstdN$ we described in Section~\ref{se:list_interpretation}.

Without loss of generality, we may assume that the elements of $\mathbb A$ are interpreted under $\Gamma$ as tuples over $\mathbb N$, that is, elements of $\List(\mathbb N,\mathbb N)$.
Then one can easily see that a function for $n\in\nstdN$, a function $s:[1,n]\to A$ represents a nonstandard tuple if and only if $(\ell(s(1)),\ldots,\ell(s(n)))$ and $s(1)\mathop{^\frown} s(2)\mathop{^\frown}\cdots\mathop{^\frown} s(n)$ are nonstandard tuples in $\nstdN$.
The latter observation gives a specific (modulo the notion of nonstandard tuples in $\nstdN$) description of nonstandard tuples in $\mathcal M$.
However, further details about nonstandard tuples over $\mathbb A$, exhibited below, do not explicitly rely on this description; indeed, they apply to elements of any structure $\mathcal M$ elementarily equivalent to $\mathbb S(\mathbb A,\mathbb N)$.
So, for the remainder of this section, we simply assume $\mathcal M$ to be an arbitrary such structure.

Notice that for each $a\in \widetilde S$ its length $\ell(s)$ is defined, and
\[
\mathcal M\models \forall a\in\widetilde S\ \forall 1\le i\le \ell(s)\ \exists! x\in A\ t(a,x,i),
\]
that is, each element $s$ of list sort of $\mathcal M$ has a unique $i$th component for each $1\le i\le \ell(s)$. Further, each $a$ is entirely defined by its components. Indeed, the following formula is satisfied in $\mathbb S(\mathbb A,\mathbb N)$ and therefore in $\mathcal M$:
\[
\forall a,b\ a=b \leftrightarrow [\forall x\in A\ \forall i\ (i\le \ell(a)\vee i\le \ell(b)) \to (t(a,x,i)\leftrightarrow t(b,x,i))].
\]
With that in mind, we often write $a=(a_1,\ldots,a_k)$, meaning that $\ell(a)=k$ and that predicates $t(a,a_1,1)$ and $t(a,a_k,k)$ are satisfied. We also write $a_i=x$ to denote that $t(a,x,i)$ is satisfied.

Like it is the case with standard tuples, we can define the membership predicate. Namely, we write $x\in a$ to denote the formula $\exists i\ t(a,x,i)$.

In the same vein, we can define the operation of concatenation $a\mathop{^\frown}b$ of $a,b\in \widetilde S$ through $t,\ell$ by taking advantage of elementary equivalence. To be specific, the following formula is satisfied in $\mathbb S(\mathbb A,\mathbb N)$ and therefore in $\mathcal M$:
\[
\begin{split}
\forall a,b\ &\exists! c\ \ell(c)=\ell(a)+\ell(b)\ \wedge\\
&\forall 1\le i\le\ell(c)\ [t(c,x,i)\leftrightarrow\\
&(i\le \ell(a)\wedge t(a,x,i))\vee (i>\ell(a) \wedge t(b,x,i-\ell(a)))].
\end{split}
\]
The unique $c$ given by the above formula is denoted $a\mathop{^\frown} b$, like in the case of standard tuples.

With the above onservations in mind, we say that a nonstandard tuple $a$ has length $k$ if $\ell(a)=k$, and we say that $x$ is the $i$th element of a nonstandard tuple $a$ if $\mathcal M\models t(a,x,i)$. Further, we can define the (nonstandard) number of occurrences $n_x(a)$ of $x$ in $a$, by defining first $n_x(a)$ to be the number of occurrences of $x$ in $a$ if $a$ has finite length, and then defining
\[
n_x(a_1,\ldots,a_k)=\begin{cases}
 n_x(a_1,\ldots,a_{k-1}),& x\neq a_k,\\
 n_x(a_1,\ldots,a_{k-1})+1,& x=a_k.
\end{cases}
\]
We say that $b$ is a nonstandard permutation of $a$ if $\forall x\in\widetilde{\mathbb A}\ n_x(a)=n_x(b)$. The same can be defined explicitly through nonstandard permutations introduced in~Subsection~\ref{se:nonstd_permutations}. Indeed, fix an arithmetic enumeration $\mathcal E$ (see Definition~\ref{de:arithmetic_enumeration}) and consider formula analogous to~\eqref{eq:permutation} that says that $n\in\mathbb N$ encodes a permutation under $\mathcal E$:
\[
\begin{split}
P(n)=\exists k\ L(n,k) &\wedge \forall 1\le i\le k\ \exists! 1\le j\le \ T(n,i,j)\\
&\wedge \forall 1\le j'\le k\ \exists! 1\le i'\le k\ T(n,i',j').
\end{split}
\]
Since $\mathcal M\equiv \mathbb S(\mathbb A,\mathbb N)$, it follows that $b$ is a nonstandard permutation of $a$ if and only if the following formula is satisfied in $\mathcal M$:
\[
\ell(a)=\ell(b) \wedge \exists n\ [P(n)\wedge L(n,\ell(a))\wedge \forall i,j\ T(n,i,j)\to b_j=a_i],
\]
where the latter equality $b_j=a_i$ is a shorthand for $\forall x\ t(b,x,j)\leftrightarrow t(a,x,i)$.

We can extend definable binary operations (for example, group product) on $\mathbb A$ to the tuples in $\mathcal M\equiv \mathbb S(\mathbb A,\mathbb N)$.

\begin{lemma}\label{le:arb_summation} Let $\mathbb A=\langle A;L\rangle$ be an arbitrary structure. Let $\mathcal M\equiv \mathbb S(\mathbb A,\mathbb N)$, with list sort $\widetilde{S}$. Let $f: A\times A\to A$ be a definable in $\mathbb A$ function. Then there is a definable in $\mathcal M$ function $\widetilde f:\widetilde{S}\to A$ s.t. $\widetilde f((a_1,a_2))=f(a_1,a_2)$ and $\widetilde f(a_1,\ldots,a_k)=f(\widetilde f((a_1,\ldots,a_{k-1})),a_k)$ for each $k\in\nstdM, k> 2$.
\end{lemma}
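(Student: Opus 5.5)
The plan is to mirror the proof of Lemma~\ref{le:summation}, replacing the arithmetic computation of partial results by a first-order sentence true in $\mathbb S(\mathbb A,\mathbb N)$ that transfers to $\mathcal M$ by elementary equivalence. Let $\phi(x,y,z)$ be the $L$-formula defining the graph of $f$ in $\mathbb A$; its relativization to the structure sort is a formula of $\mathcal M$. For the definability of $\widetilde f$ to make sense, one should use this $\phi$ (rather than $f$ itself) when working inside $\mathcal M$. I will write $f$ for the function $\phi$ defines on the structure sort of $\mathcal M$; it is a total function there because $\forall x,y\,\exists! z\,\phi(x,y,z)$ holds in $\mathbb A$ and hence in $\mathcal M$.

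\textbf{Step 1.} I will write down the formula $\mathrm{Acc}(a,s)$ expressing that $s$ is the tuple of partial results for $a$:
\[
\ell(s)=\ell(a)\wedge \forall x\,(t(s,x,1)\leftrightarrow t(a,x,1))\wedge \forall 1\le i<\ell(a)\ \forall x,y,z\,\bigl(t(s,x,i)\wedge t(a,y,i+1)\wedge t(s,z,i+1)\to\phi(x,y,z)\bigr).
\]

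\textbf{Step 2.} I will check that $\mathbb S(\mathbb A,\mathbb N)\models\forall a\,(\ell(a)\ge 1\to\exists! s\,\mathrm{Acc}(a,s))$. In the standard structure both existence and uniqueness follow by ordinary induction on $\ell(a)$: the components of $s$ are forced one by one, and a tuple is determined by its components. By elementary equivalence the same sentence holds in $\mathcal M$.

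\textbf{Step 3.} I will define $\widetilde f(a)=x$ by the formula $\exists s\,(\mathrm{Acc}(a,s)\wedge t(s,x,\ell(a)))$. This is a definable function on the tuples of positive length. For tuples of length $1$ it returns $a_1$; one may restrict the domain to lengths $\ge 2$ if desired.

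\textbf{Step 4.} I will verify the two identities. For $\ell(a)=2$, the defining clause of $\mathrm{Acc}$ gives $\phi(a_1,a_2,s_2)$, so $\widetilde f((a_1,a_2))=f(a_1,a_2)$.

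For the recursion I need the sentence
\[
\forall a,s\,\bigl(\mathrm{Acc}(a,s)\wedge \ell(a)>1\to \mathrm{Acc}(a|_{\ell(a)-1},s|_{\ell(a)-1})\bigr),
\]
together with the existence of restrictions, namely $\forall a\,\forall n\le\ell(a)\,\exists b\,(\ell(b)=n\wedge\forall i\le n\,\forall x\,(t(b,x,i)\leftrightarrow t(a,x,i)))$. Both hold in $\mathbb S(\mathbb A,\mathbb N)$ and so transfer to $\mathcal M$. Combining them with the last clause of $\mathrm{Acc}$ at $i=k-1$ gives $\widetilde f(a_1,\ldots,a_k)=f(\widetilde f(a_1,\ldots,a_{k-1}),a_k)$ for every $k\in\nstdM$ with $k>2$.

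The only real subtlety is that the statement is phrased for arbitrary $\mathcal M\equiv\mathbb S(\mathbb A,\mathbb N)$. Induction is therefore not available in $\mathcal M$ directly, and every property used above, including the existence of accumulator tuples and of restrictions, must be packaged as a single first-order sentence verified in the standard superstructure and then transferred. I expect this to be routine once the formulas are written. The main point to keep track of is that uniqueness of $s$ also relies on the extensionality sentence for tuples stated earlier in this section.
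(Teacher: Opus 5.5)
Your proposal is correct and follows the paper's proof: the paper also builds the accumulator tuple $s$ with $s_1=a_1$ and $s_{i+1}=f(s_i,a_{i+1})$, and sets $\widetilde f(a)=s_k$. Beyond that, you make explicit what the paper leaves implicit. Existence and uniqueness of $s$, and its compatibility with restrictions, are first-order sentences true in $\mathbb S(\mathbb A,\mathbb N)$ and hence in $\mathcal M$; and $f$ has to be replaced by its defining formula acting on the structure sort of $\mathcal M$.
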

\begin{proof}
The proof essentially repeats that of Lemma~\ref{le:summation}.

Indeed, for a tuple $(a_1,\ldots,a_k)\in \widetilde S$, we define a new tuple $s\in \widetilde S$ of length $k$ by
\begin{enumerate}
    \item $s_1=a_1$,
    \item $s_{i+1}=f(s_i,a_{i+1})$, $1\le i\le k-1$.
\end{enumerate}
Then the function $(a_1,\ldots,a_k)\mapsto s_k$ is definable and satisfies the requirement of the lemma.
\end{proof}


Similarly to and in the same sense as in Subsection~\ref{se:nonstandard_summation}, properties like associativity and commutativity of $f$ carry to $\widetilde{f}$.
Namely, let $(b_1,\ldots,b_k)\in \widetilde{S}$ be a nonstandard permutation of $(a_1,\ldots,a_k)\in \widetilde{S}$. Then, if $f$ is associative and commutative, then $\widetilde f((a_{1},\ldots,a_{k}))=\widetilde f((b_1,\ldots,b_k))$.

\begin{remark}\label{re:terms_generalization}
Lemma~\ref{le:arb_summation} is a particular application of the following general observation. Suppose we have an indexed by $i\in\mathbb N$ family of functions $f_i$ in variables $x_1,\ldots,x_{k_i}$ defined by a formula of $\mathbb S(\mathbb A,\mathbb N)$ in the following sense: there is a formula $\sigma(z,\bar x,i)$ s.t. $\mathbb S(\mathbb A,\mathbb N)\models \sigma(b,\bar a,i)$ if and only if $\ell(\bar a)=k_i$ and $b=f_i(a_1,\ldots,a_{k_i})$.
Then over any $\mathcal M\equiv \mathbb S(\mathbb A,\mathbb N)$ with numbers sort~$\nstdN$, the same formula defines generalization of $f_i$ to $i\in\nstdN$, in particular, to $i>\mathbb N$.
\end{remark}


\subsubsection{Weak second-order logic and nonstandard models}\label{se:wso_translation}
By a weak second-order logic formula over a structure $\mathbb A=\langle A; L\rangle$ we mean a first-order formula over the list superstructure $\mathbb S(\mathbb A,\mathbb N)$. There are other, equivalent, descriptions, for details of which we refer the reader to~\cite{KharlampovichMyasnikovSohrabi:2021}.

For our investigation, it is a crucial observation that quantification over tuples becomes quantification over nonstandard tuples when a nonstandard model is considered. To formulate this, we start with the following observation.

\begin{proposition}\label{pr:list_interpretation}
If a structure $\mathbb A$ is absolutely interpretable in $\mathbb N$, then so is its list superstructure $\mathbb S(\mathbb A,\mathbb N)$.
\end{proposition}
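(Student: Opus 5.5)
The plan is to compose interpretations. Let $\mathbb A\cong\Delta(\mathbb N)$ be an absolute interpretation of dimension $n$, given by formulas $U_\Delta(\bar x)$, $E_\Delta(\bar x,\bar x')$ and $Q_\Delta$ for $Q\in L(\mathbb A)$. The idea is to code a list $(a_1,\ldots,a_k)$ of elements of $A$ by a list of representatives $(\bar x_1,\ldots,\bar x_k)$, $\bar x_i\in A_\Delta\subseteq\mathbb N^n$. Flattening, this is a single standard tuple in $\mathbb N$ of length $nk$. This is exactly the content of Proposition~\ref{le:tuples_tuples}(a); below I spell out the construction.

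Fix an arithmetical enumeration $\mathcal E=(T,L)$. I represent $\mathbb S(\mathbb A,\mathbb N)$ as a one-sorted structure on the disjoint union $A\sqcup\List(A)\sqcup\mathbb N$, with unary predicates for the sorts. I place the three sorts inside $\mathbb N^{n+1}$ using a last coordinate equal to $1,2,3$ respectively, padding with zeros where needed.

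\begin{itemize}
\item The structure sort is $A_\Delta\times\{1\}$, with equivalence $E_\Delta$ and the symbols of $L(\mathbb A)$ interpreted by the $Q_\Delta$.
\item The number sort is $\{(m,0,\ldots,0,3)\}$, with $+$ and $\cdot$ interpreted in the obvious way.
\item The list sort is the set of $(m,0,\ldots,0,2)$ such that $\ell(\mathcal E(m))=nk$ for some $k\ge 1$ and each consecutive block $(\mathcal E(m)_{n(i-1)+1},\ldots,\mathcal E(m)_{ni})$ lies in $U_\Delta$.
\end{itemize}

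Each of these conditions is first-order in $\mathbb N$, because bounded universal quantification over $i\le k$ together with $T$ and $L$ is expressible.

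Equivalence on the list sort is blockwise: two codes are equivalent when they have equal lengths and $E_\Delta$ holds between corresponding blocks for all $i$. The length predicate $l(s)=k$ is defined by $L(m,nk)$. The predicate $t(s,a,i)$ holds when block $i$ of $\mathcal E(m)$ is $E_\Delta$-equivalent to the representative $\bar x$ of $a$. All of these are $0$-definable, since $T$, $L$ and the $\Delta$-formulas are.

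Next I verify the isomorphism. The map sends the class of $m$ to $(\mu_\Delta(\text{block}_1),\ldots,\mu_\Delta(\text{block}_k))$, it is the identity on the number sort, and it is $\mu_\Delta$ on the structure sort. The map is well defined and injective on the list sort by the definition of the blockwise equivalence. It preserves $t$ and $l$ by construction.

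The main obstacle is surjectivity: every finite list $(a_1,\ldots,a_k)$ must have a code. I would choose representatives $\bar x_i\in\mu_\Delta^{-1}(a_i)$ and concatenate them. For each fixed $k$, axiom~\eqref{eq:tuple7} with $nk$ entries produces $m$ with $\mathcal E(m)$ equal to this concatenation. This holds because we work in the standard $\mathbb N$, where every finite tuple is coded.

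Finally, using a separate sort predicate avoids collisions between the sorts. Alternatively one can realize everything in $\mathbb N$ rather than $\mathbb N^{n+1}$ via definable pairing, but this is unnecessary.
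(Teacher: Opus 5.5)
Your proof is correct and is essentially the paper's argument. The paper codes $(a_1,\ldots,a_k)$ by the concatenation $a_1^\diamond\frown\cdots\frown a_k^\diamond$ of representatives, interprets $\mathbb S(\mathbb A,\mathbb N)$ in $\mathbb S(\mathbb N,\mathbb N)$ this way, and then composes with $\Gamma_\mathcal E$. You carry out that composition in a single step through $\mathcal E$, and you make explicit the blockwise equivalence and the use of axiom~\eqref{eq:tuple7} for surjectivity, which the paper leaves as ``straightforward.''
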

\begin{proof}
Let $\mathbb A=\langle A;L\rangle$. Notice that $\mathbb S(\mathbb A,\mathbb N)$ can be interpreted in $\mathbb S(\mathbb N,\mathbb N)$. Indeed, a tuple $(a_1,\ldots,a_k)\in\List(A,\mathbb N)$ is interpreted as a tuple $a_1^\diamond\mathop{\frown}a_2^\diamond\mathop{\frown}\ldots\mathop{\frown}a_k^\diamond\in\List(\mathbb N,\mathbb N)$, where for each $1\le i\le k$, $a_i^\diamond$ is any representative of the equivalence class interpreting $a_i$. The rest of the interpretation is defined accordingly in a straightforward manner.

Since $\mathbb S(\mathbb N,\mathbb N)$ is absolutely interpretable in $\mathbb N$, the statement follows by composing the interpretations.
\end{proof}

Now, let $\mathbb A$ be absolutely interpretable in $\mathbb N$ as $\mathbb A\cong \Gamma(\mathbb N)$ and let $\mathbb S(\mathbb A,\mathbb N)\cong \Delta(\mathbb N)$ be the absolute interpretation provided by Proposition~\ref{pr:list_interpretation}. Suppose $\nstdN\equiv \mathbb N$ is a nonstandard model of $\mathbb N$ and consider $\widetilde{\mathbb A}\cong\Gamma(\nstdN)$ and $\mathcal M=\widetilde{\mathbb S}(\widetilde{\mathbb A},\nstdN)\cong \Delta(\mathbb N)$.

\begin{remark}\label{re:nonstd_tuples} Consider $\varphi$, a weak second-order formula of $\mathbb A$, that is, a first-order formula of $\mathbb S(\mathbb A,\mathbb N)$. If $\mathbb S(\mathbb A,\mathbb N)\models \varphi$, then $\mathcal M\models \varphi$ due to elementary equivalence. In particular, if $\varphi$ is of the form
\[
\forall \bar a\in \List(A,\mathbb N)\ \psi(\bar a)\mbox{\quad or\quad}\exists \bar a\in \List(A,\mathbb N)\ \psi(\bar a),
\]
then for $\mathcal M$ the same formula takes form
\[
\forall \bar a\in \List_{\widetilde{\mathbb A}}\ \psi(\bar a)\mbox{\quad or\quad}\exists \bar a\in \List_{\widetilde{\mathbb A}}\ \psi(\bar a),
\]
where $\List_{\widetilde{\mathbb A}}$ denotes the list sort of $\widetilde{\mathbb S}(\widetilde{\mathbb A}, \nstdN)$.
The same can be applied to the first quantifier of $\psi$, and so on. 
\end{remark}

In other words, under these assumptions, a statement of the weak second-order theory of $\mathbb A$ translates to a statement of the second-order theory of $\mathbb A$ by replacing quantification over tuples with quantification over nonstandard tuples.

\section{Free group}\label{se:free_group}
In this section, we introduce nonstandard models of a free group. As a byproduct, we obtain a description of ultrapowers of a free group. Let $F$ be a free group on $k$ generators $x_1,\ldots, x_k$. After Kharlampovich, Myasnikov~\cite{KharlampovichMyasnikov:2018b}, we interpret $F$ in the list superstructure $\mathbb S(\mathbb Z,\mathbb N)$, by setting a reduced group word $x_{i_1}^{\varepsilon_1}\cdots x_{i_n}^{\varepsilon_n}$, where $x_{i_j}^{\varepsilon_j}\neq (x_{i_{j+1}}^{\varepsilon_{j+1}})^{-1}$, to correspond to the tuple $(\varepsilon_1 i_1,\ldots, \varepsilon_n i_n)$, where $\varepsilon_ji_j\neq -\varepsilon_{j+1}i_{j+1}$. It is straightforward to see that the multiplication can be interpreted as concatenation of the respective tuples and removal of the longest tuple of the form $(d_\ell,\ldots,d_1,-d_1,\ldots,-d_\ell)$ symmetric about the concatenation point. This leads to an absolute interpretation $F\cong \Gamma(\mathbb S(\mathbb Z,\mathbb N)$.

This allows to give a family of groups elementarily equivalent to $F$.
Let $\nstdN\equiv\mathbb N$ be any nonstandard model of $\mathbb N$, $\widetilde{\mathbb Z}$---the corresponding model of $\mathbb Z$, and $\widetilde{\mathbb S}(\widetilde{\mathbb Z},\nstdN)$---the corresponding model of $\mathbb S(\mathbb Z,\mathbb N)$.
Then $\widetilde{F}\cong \Gamma(\widetilde{\mathbb S}(\widetilde{\mathbb Z},\nstdN))$ is elementarily equivalent to $F$ by~\cite[Lemma 4.4(2)]{KharlampovichMyasnikovSohrabi:2021}.
Denote $\widetilde{F}=F(\nstdN)$ (notice that this notation is well-defined by Theorem~\ref{th:equiv2}).

Now we fix a particular nonprincipal ultrafilter $D$ on $\omega$ and consider the corresponding ultrapowers $F^*\cong \prod\limits_{i\in \omega}F\ /D$ and $\mathbb Z^*\cong \prod\limits_{i\in \omega}\mathbb Z\ /D$. Since $\mathbb Z^*\equiv \mathbb Z$, we denote it by $\widetilde{\mathbb Z}$, as above.

\begin{theorem}\label{th:free_group_ultrapower}
Suppose $2^\omega=\omega+$. In the above notation, $F^*\cong \widetilde F$.
\end{theorem}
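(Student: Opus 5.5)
Ultrapowers commute with interpretations, so the plan is to push the ultrapower inside the interpretation code. Let $F\cong\Gamma(\mathbb S(\mathbb Z,\mathbb N))$ be the Kharlampovich--Myasnikov interpretation. Compose it with the interpretation of $\mathbb S(\mathbb Z,\mathbb N)$ in $\mathbb N$ (Proposition~\ref{pr:list_interpretation}), and with the standard interpretation of $\mathbb N$ in $\mathbb Z$ via Lagrange's four-square theorem. This gives a single absolute interpretation $F\cong\Delta(\mathbb Z)$ with code $\Delta=\{U_\Delta,E_\Delta,Q_\Delta\}$ of some dimension $n$.

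The first step is the general fact that for any absolute interpretation $\mathbb A\cong\Delta(\mathbb B)$ one has $\prod_D\Delta(\mathbb B)\cong\Delta(\prod_D\mathbb B)$. To see this, map a class $[(\bar b_i)_i]_D\in\prod_D(U_\Delta(\mathbb B)/{\sim})$, with $\bar b_i\in U_\Delta(\mathbb B)$, to the class of the tuple $([b_{i,1}]_D,\ldots,[b_{i,n}]_D)\in(\mathbb B^*)^n$. By \L o\'s's theorem this tuple satisfies $U_\Delta$ in $\mathbb B^*=\prod_D\mathbb B$. Well-definedness and injectivity modulo $E_\Delta$ follow from \L o\'s applied to $E_\Delta$. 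Surjectivity holds because any tuple in $U_\Delta(\mathbb B^*)$ has, again by \L o\'s, representatives lying in $U_\Delta(\mathbb B)$ for $D$-almost all $i$; we modify them arbitrarily on a $D$-null set. Preservation of the group operations follows from \L o\'s applied to the formulas $Q_\Delta$. This yields $F^*\cong\prod_D\Delta(\mathbb Z)\cong\Delta(\mathbb Z^*)=\Delta(\widetilde{\mathbb Z})$.

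The second step identifies $\Delta(\widetilde{\mathbb Z})$ with $\widetilde F=\Gamma(\widetilde{\mathbb S}(\widetilde{\mathbb Z},\nstdN))$. Here $\nstdN$ is the nonnegative part of $\widetilde{\mathbb Z}$, defined by the four-square formula. Composition of interpretations commutes with evaluation on elementarily equivalent structures, so $\Delta(\widetilde{\mathbb Z})$ is exactly $\Gamma$ applied to the list superstructure interpreted in $\nstdN$, which by definition is $\widetilde{\mathbb S}(\widetilde{\mathbb Z},\nstdN)$. By Theorem~\ref{th:equiv2} this does not depend on the chosen interpretation up to definable isomorphism, since $F$ is finitely generated in a finite signature. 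Hence $F^*\cong\widetilde F$.

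The main obstacle is bookkeeping rather than mathematics. One must check that the composed interpretation, including the two-sorted list structure and the passage between $\mathbb Z$ and $\mathbb N$, is genuinely absolute. One must also check that $\widetilde F$ as defined in the paper coincides with $\Delta(\widetilde{\mathbb Z})$ for this $\widetilde{\mathbb Z}=\mathbb Z^*$. I expect to settle this by writing $\Delta$ explicitly as the composition and appealing to Theorem~\ref{th:equiv2}. The hypothesis $2^\omega=\omega^+$ does not appear to be needed in this argument. It would matter only if one wanted $\widetilde F$ for an arbitrary nonstandard $\widetilde{\mathbb Z}$ of size continuum. In that case CH makes both structures saturated of size $\aleph_1$, and uniqueness of saturated models would give the isomorphism. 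I would mention this as the alternative route.
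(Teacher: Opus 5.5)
Your proof is correct, but it follows a different route from the paper.

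\textbf{The paper's route.} The paper argues via saturation. Since $D$ is a nonprincipal ultrafilter on $\omega$, the ultrapowers $F^*$ and $\mathbb Z^*$ are $\aleph_1$-saturated. Under $2^{\aleph_0}=\aleph_1$ this makes them saturated in their own cardinality. An absolute interpretation preserves saturation, so $\widetilde F=\Gamma(\mathbb Z^*)$ is also saturated of size $\aleph_1$. Finally, $F^*\equiv F\equiv\widetilde F$ (Theorem~\ref{th:equiv0}), and uniqueness of saturated models of a given cardinality gives the isomorphism.

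\textbf{Your route.} You prove directly, by \L o\'s's theorem, that ultrapowers commute with absolute interpretations: $\prod_D\Delta(\mathbb B)\cong\Delta(\prod_D\mathbb B)$. You then identify $\Delta(\mathbb Z^*)$ with $\Gamma(\widetilde{\mathbb S}(\mathbb Z^*,\nstdN))$ by composing codes. If $\Delta$ is taken to be literally that composition, the appeal to Theorem~\ref{th:equiv2} is not even needed.

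\textbf{What each buys.} Your argument is more elementary and needs only ZFC, so the CH hypothesis in the statement is superfluous. It works for any ultrafilter on any index set and gives an explicit, coordinatewise isomorphism. It also yields Theorems~\ref{th:free_group_ultrapower_gen}, \ref{th:arbitrary_group_ultrapower_gen} and~\ref{th:arbitrary_structure_ultrapower_gen} without any GCH assumption. This matches the claim in Section~\ref{se:ultrapowers}, which the paper states without a continuum hypothesis. The paper's saturation argument gives only an abstract isomorphism. Its advantage is that it is the argument that extends to Theorem~\ref{th:saturated}, which identifies \emph{every} saturated model of $\Th(G)$, not just ultrapowers, as a nonstandard model $G(\widetilde{\mathbb Z})$.

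\textbf{A minor correction.} Your closing side remark is inaccurate. CH does not make an arbitrary nonstandard $\widetilde{\mathbb Z}$ of size continuum saturated. In the paper's argument, saturation comes from $\mathbb Z^*$ being an ultrapower over a countably incomplete ultrafilter; CH is used only to match the degree of saturation to the cardinality. The remark plays no role in your main argument, so the proof stands.
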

\begin{proof}
We note that the ultrapower $F^*$ is $\omega+$-saturated (therefore, saturated by continuum hypothesis $2^\omega=\omega+$), and so is $\mathbb Z$. It is straightforward to see that if $\mathbb A$ is saturated, then so is any structure absolutely interpreted in $\mathbb A$. Therefore, $\widetilde{F}\cong \Gamma(\widetilde{\mathbb Z})$ is saturated. Now, $F^*\cong \widetilde{F}$ since they are saturated models of the same cardinality (see, for example,~\cite[Theorem 4.3.20]{Marker}).
\end{proof}

In fact, a similar statement can be made for the case of the generalized continuum-hypothesis.
\begin{theorem}\label{th:free_group_ultrapower_gen}
Let $F$ be a finitely generated free group. Suppose $\kappa$ is an infinite cardinal such that $2^\kappa=\kappa+$. Let $D$ be a non-principal ultrafilter on $\kappa$ and $F^*\cong \prod\limits_{i\in \omega}F\ /D$ and $\mathbb Z^*\cong \prod\limits_{i\in \omega}\mathbb Z\ /D$ be the corresponding ultrapowers. Then $F^*\cong \widetilde F$, where $\widetilde{F}$ denotes the nonstandard model $F(\mathbb Z^*)$.
\end{theorem}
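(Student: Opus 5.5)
The plan follows the proof of Theorem~\ref{th:free_group_ultrapower}, with $\omega$ replaced by $\kappa$. I read the products in the statement as indexed by $\kappa$, that is, $F^*=F^\kappa/D$ and $\mathbb Z^*=\mathbb Z^\kappa/D$. I would also assume that $D$ is countably incomplete (automatic for $\kappa=\omega$) and, as the saturation argument requires, that $D$ is $\kappa^+$-good. The argument has three steps.

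\textbf{Step 1: $F^*$ is saturated.} By Keisler's theorem, an ultrapower of a countable-language structure by a countably incomplete $\kappa^+$-good ultrafilter on $\kappa$ is $\kappa^+$-saturated. Apply this to $F^*$ and to $\mathbb Z^*$. For cardinality, $|F^*|\le |F|^\kappa=\omega^\kappa=2^\kappa=\kappa^+$. A countably incomplete ultrapower of an infinite countable structure has cardinality $\ge 2^\omega$; a $\kappa^+$-saturated infinite structure also has cardinality $\ge\kappa^+$. Hence $|F^*|=|\mathbb Z^*|=\kappa^+$, and both are saturated.

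\textbf{Step 2: $\widetilde F=F(\mathbb Z^*)$ is saturated of cardinality $\kappa^+$.} We have $F\cong\Gamma(\mathbb Z)$ absolutely, by composing the interpretation of $F$ in $\mathbb S(\mathbb Z,\mathbb N)$ with Proposition~\ref{pr:list_interpretation} and the bi-interpretation of $\mathbb N$ and $\mathbb Z$. Then $\widetilde F\cong\Gamma(\mathbb Z^*)$, and this does not depend on the choice of interpretation by Theorem~\ref{th:equiv2}. Now take a type $p(x)$ over a parameter set $C\subseteq\widetilde F$ with $|C|<\kappa^+$. Each parameter is the class of a tuple from $\mathbb Z^*$; choose such representatives. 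Replace every formula of $p$ by its $\Gamma$-translation in $\dim\Gamma$ variables, and add the conjunct $U_\Gamma(\bar x)$. The result is a type over fewer than $\kappa^+$ parameters of $\mathbb Z^*$. It is finitely satisfiable because $p$ is, so by saturation of $\mathbb Z^*$ it is realized by some $\bar b$. The class of $\bar b$ modulo $\sim_\Gamma$ then realizes $p$. For the cardinality: $|\widetilde F|\le|\mathbb Z^*|=\kappa^+$, and $\widetilde F$ is itself $\kappa^+$-saturated and infinite, so $|\widetilde F|\ge\kappa^+$.

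\textbf{Step 3: conclude.} By Theorem~\ref{th:equiv0}, $\widetilde F\equiv F\equiv F^*$. Two saturated elementarily equivalent models of the same cardinality are isomorphic (\cite[Theorem 4.3.20]{Marker}), so $F^*\cong\widetilde F$.

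The main obstacle is Step 1. Saturation of the ultrapower is not true for an arbitrary non-principal $D$ on an uncountable $\kappa$, so the statement must be read with $D$ good and countably incomplete, or else one must invoke Keisler's existence of such ultrafilters. For a regular (hence countably incomplete) $D$ one at least gets $|F^*|=\omega^\kappa=2^\kappa=\kappa^+$ directly. I would state the goodness hypothesis explicitly, noting that it is automatic for $\kappa=\omega$.
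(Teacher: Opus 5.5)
Your proof follows the paper's own route, written out in detail: saturation of $F^*$ and $\mathbb Z^*$, transfer of saturation along the absolute interpretation $\Gamma$, and uniqueness of saturated elementarily equivalent models of equal cardinality. Under the hypotheses you add (countably incomplete, $\kappa^+$-good $D$) each of your steps is correct. You are also right that the paper's one-line assertion that the ultrapower is saturated is not automatic for an arbitrary non-principal $D$ on an uncountable $\kappa$.

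The problem is the conclusion you draw from this. The theorem does not need to be restricted; the obstacle exists only because the proof goes through saturation. The missing idea is that ultrapowers commute with absolute interpretations, by \L{}o\'s's theorem. Put $I=\kappa$ and $n=\dim\Gamma$.

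\begin{itemize}
\item If $([f_1],\ldots,[f_n])\in U_\Gamma(\mathbb Z^I/D)$, then $\mathbb Z\models U_\Gamma(f_1(i),\ldots,f_n(i))$ for $D$-almost all $i$.
\item Send the $\sim_\Gamma$-class of this tuple to the class in $\Gamma(\mathbb Z)^I/D$ of the function $i\mapsto[(f_1(i),\ldots,f_n(i))]_{\sim_\Gamma}$, which is defined almost everywhere.
\item \L{}o\'s's theorem for $E_\Gamma$ shows this map is well defined and injective.
\item Choosing representatives pointwise shows it is surjective.
\item \L{}o\'s's theorem for the formulas $Q_\Gamma$ shows it preserves the group operations.
\end{itemize}

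Hence $\widetilde F=\Gamma(\mathbb Z^I/D)\cong\Gamma(\mathbb Z)^I/D\cong F^I/D=F^*$ for every ultrafilter $D$ on every index set. This holds in ZFC, without $2^\kappa=\kappa^+$, goodness, or saturation, and it is also how the paper's introduction phrases the result. As it stands, your proposal proves the statement only for good, countably incomplete $D$, whereas this short argument proves it as stated.
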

\begin{proof}
The proof repeats that of Theorem~\ref{th:free_group_ultrapower} with $\kappa$ instead of $\omega$.
\end{proof}

The same approach works to describe ultrapowers of arbitrary groups (or indeed algebraic structures) interpretable in $\mathbb N$, in particular, the groups whose word problem is arithmetic under a G\"odel enumeration. We elaborate on this in Section~\ref{se:presentations}.

\begin{remark}\label{re:iso}
    Under the same assumptions, we have $\widetilde{F_2}\cong F_2^*\cong F_3^*\cong \widetilde{F_3}$. It would be interesting to understand anything about the isomorphism $\widetilde{F_2}\cong \widetilde{F_3}$. In particular, in Remark~\ref{re:lovecraft} we show that this isomorphism is not weak second-order definable.
\end{remark}


We can describe the elements and operations of $\widetilde F$ explicitly, in terms of $\widetilde{\mathbb S}(\widetilde{\mathbb Z},\nstdN)$. Indeed, inspecting the interpretation $\Gamma$ above, we see that the elements of $\widetilde F$ are given by the nonstandard tuples $(a_1,a_2,\ldots,a_n)$, where $1\le |a_i|\le k$ for each $1\le i\le n$, and $a_i\neq -a_{i+1}$ for all $1\le i\le n-1$. The group multiplication is then given by concatenating the respective tuples and removing the maximal ``canceling out'' segment. Namely, let
\[
(a_1,\ldots,a_n,b_1,\ldots,b_m)=(a_1,\ldots,a_{n-\ell},d_{\ell},\ldots,d_1,-d_1,\ldots,-d_\ell,b_{\ell+1},\ldots, b_m),
\]
with $a_{n-\ell}\neq -b_{\ell+1}$, or $\ell =n$, or $\ell =m$. Then
\[
(a_1,\ldots,a_n)\cdot (b_1,\ldots,b_m)=(a_1,\ldots,a_{n-\ell},b_{\ell+1},\ldots, b_m).
\]
Group inversion is given by
\begin{equation}\label{eq:inversion}
(a_1,\ldots,a_n)^{-1}=(-a_n,\ldots,-a_1).
\end{equation}

In our interpretation, instead of setting group elements to correspond to ``reduced'' tuples, we can set them to correspond to equivalence classes of tuples under free cancellation. The resulting group, which for the moment we denote by $\mathop{\widetilde F}'$ is isomorphic to $\widetilde F$ by Theorem~\ref{th:equiv2}. We give the details here, since we take advantage of this approach later when we introduce nonstandard group presentations.

We say that two nonstandard tuples $(a_1,\ldots, a_n)$, $1\le |a_i|\le k$ for each $1\le i\le n$, and $(b_1,\ldots, b_m)$, $1\le |b_i|\le k$ for each $1\le i\le m$, are freely equivalent if there is a nonstandard tuple of nonstandard tuples
\[
\begin{split}
((a_1,\ldots,a_n)=(c_1^1,\ldots,c_{n_1}^1),&\\
(c_1^2,\ldots,c_{n_2}^2),&\\
\ldots,&\\
(b_1,\ldots,b_m)=(c_1^r,\ldots,c_{n_r}^r)&)\\
\end{split}
\]
such that for each $1\le i\le r-1$, the tuples $c^i$ and $c^{i+1}$ differ by deletion or insertion of a freely cancelling pair:
\[
\begin{split}
(c_1^i,\ldots,c_{n_i}^i)=(c_1^{i+1},\ldots,c_j^{i+1}, a, -a,c_{j+1}^{i+1}, \ldots, c_{n_{i+1}}^{i+1})&\quad \mbox{or}\\
(c_1^i,\ldots,c_j^{i}, a, -a,c_{j+1}^{i},\ldots,c_{n_i}^i)=(c_1^{i+1}, \ldots, c_{n_{i+1}}^{i+1})&,
\end{split}
\]
where $1\le |a|\le k$. Group multiplication and inversion are then interpreted in a straightforward way, multiplication by concatenation of representatives of equivalence classes, and inversion by~\eqref{eq:inversion} applied to a representative. Each equivalence class has a unique freely reduced representative, the above operations are well defined, and the reduced representative mapping $\mathop{\widetilde F}'\to \widetilde F$ is a group isomorphism. Indeed, all of these statements are true in the case of a free group $F$, and can be expressed by first-order formulas of $\mathbb S(\mathbb Z,\mathbb N)$. Therefore the same formulas hold for the elementarily equivalent $\widetilde{\mathbb S}(\widetilde{\mathbb Z},\nstdN)$, which translates to the above statements.

\section{Structure and basic properties of nonstandard free group}\label{se:basic_structure}
In this section we give examine basic properties of a nonstandard free group $\widetilde{F}\cong \Gamma(\widetilde{\mathbb S}(\widetilde{\mathbb Z}, \nstdN))$, where $F$ is a free group of rank $r$ freely generated by $x_1,\ldots,x_r$ and $\Gamma$ is as introduced in Section~\ref{se:free_group}.
The general idea is that, due to Remark~\ref{re:nonstd_tuples}, properties of the standard free group $F$ carry over to $\widetilde{F}$, with the change that whenever a finite sequence of elements is involved, the corresponding statement for $\widetilde{F}$ will utilize a nonstandard sequence.

For example, $\widetilde{F}$ is not finitely generated but it is \emph{nonstandardly} generated in the sense that for every $f\in\widetilde{F}$, there is a $k\in\nstdN$ and nonstandard tuples $(i_1,\ldots,i_k)$, $1\le i_j\le r$, and $(\varepsilon_1,\ldots,\varepsilon_k)$, $\varepsilon_j\in\{-1,1\}$, s.t. $\prod_j x_{i_j}^{\varepsilon_j}=f$.
Here and everywhere below the product is understood in the sense of Lemma~\ref{le:arb_summation}.
While this follows by definition of $\Gamma$, we take an opportunity to explain the same through Proposition~\ref{pr:list_interpretation} and Remark~\ref{re:nonstd_tuples}, which will be our main tool in this section.
Indeed, consider the formula $\Phi_{r}$ of the first order theory of $\mathbb S(F,\mathbb N)$ that records that there are $r$ elements $x_1,\ldots, x_r$ s.t. every element of $F$ can be expressed as a finite group product of elements of $\bar x = (x_1,\ldots,x_r)$ and their inverses:
\[
\begin{split}
Gen(\bar x) = \forall x\in F\ \exists \bar y\ \left(\forall 1\le i\le \ell(\bar y)\ \ y_i\in \bar x \vee y_i^{-1}\in \bar x\right) \wedge x=\prod\bar y.
\end{split}
\]
Since $S(F,\mathbb N)\models \exists\bar x\ (Gen(\bar x)\wedge \ell(\bar x)=r)$ for some $r$, we have $\widetilde{\mathbb S}(\widetilde{\mathbb Z}, \nstdN)\models \exists\bar x\ (Gen(\bar x)\wedge \ell(\bar x)=r)$.
The latter is the statement that every element $x$ of $\widetilde{F}$ can be expressed as product of elements of a tuple $(y_1,\ldots,y_k)$, where each $y_j$ is some $x_\ell^{\pm 1}$.
The tuple $(y_1,\ldots,y_k)$ is a member of the list sort of $\widetilde{\mathbb S}(\widetilde{F}, \nstdN)$, i.e., a nonstandard tuple over $\widetilde{F}$. In other words, every element of $\widetilde{F}$ is a nonstandard product of free generators of $\widetilde{F}$.

Consider formulas $Red(\bar y)$:
\[
Red(\bar y) = \forall 1\le i\le \ell(\bar y)\ \ y_i\neq y_{i+1}^{-1}
\]
and $Free(\bar x)$:
\[
\begin{split}
Free(\bar x)=\forall \bar y, \bar z\  \Bigl[&\left(\forall 1\le i\le \ell(\bar y)\ \ y_i\in \bar x\vee y_i^{-1}\in \bar x\right)\wedge Red(\bar y)\wedge \\
&\left(\forall 1\le j\le \ell(\bar z)\ \ z_j\in \bar x\vee z_j^{-1}\in \bar x\right)\wedge Red(\bar z)\wedge\\
& \prod\bar y=\prod\bar z\ \Bigr]\to \bar y =\bar z.
\end{split}
\]
Now we can record that a group is freely generated by some tuple $\bar x$:
\[
FreeGen(\bar x)= Gen(\bar x)\wedge Free(\bar x),
\]
and the formula $\Phi$ that records well-definedness of rank for a free group:
\[
\forall \bar x,\bar x'\ \ FreeGen(\bar x)\wedge FreeGen(\bar x') \to \ell(\bar x)=\ell(\bar x').
\]
\begin{remark}\label{re:lovecraft}
    It follows that the isomorphism in Remark~\ref{re:iso} is not definable in $\widetilde{\mathbb S}(\widetilde{\mathbb Z},\nstdN)$ and does not respect nonstandard products.
\end{remark}

In the standard free group, two products of free generators are equal if and only if there is a sequence of free cancellations and free insertions that transforms one product into the other. Below is an analogue of this statement for a nonstandard free group.

\begin{proposition}
Suppose $\bar x$ is a tuple freely generating a nonstandard free group $\widetilde{F}$. Suppose $\bar a$ and $\bar b$ are two nonstandard tuples of elements of $\bar x$ and their inverses such that $\prod \bar a=\prod \bar b$. Then there is a nonstandard tuple of nonstandard tuples $(\bar a= \bar c^1,\ldots, \bar b = \bar c^r)$ s.t. $\bar c^i$ and $\bar c^{i+1}$ differ by a single free cancellation or insertion.
\end{proposition}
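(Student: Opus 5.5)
The plan is the transfer argument used throughout this section: write the statement as a single first-order sentence of $\mathbb S(F,\mathbb N)$, check that it holds in the standard structure, and then read it off in the nonstandard list superstructure via Remark~\ref{re:nonstd_tuples}. Concretely, let $\widetilde{F}\cong\Gamma(\widetilde{\mathbb S}(\widetilde{\mathbb Z},\nstdN))$ and $\mathcal M=\widetilde{\mathbb S}(\widetilde F,\nstdN)$. By Proposition~\ref{pr:list_interpretation} and Theorem~\ref{th:equiv0}, $\mathcal M\equiv\mathbb S(F,\mathbb N)$, and its list sort is exactly the set of nonstandard tuples over $\widetilde F$. Nonstandard tuples of nonstandard tuples live in the list sort of $\mathbb S(\mathbb S(F,\mathbb N),\mathbb N)$. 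To avoid introducing a new structure, I would encode them inside $\mathbb S(F,\mathbb N)$ itself, in one of two ways. The first is a pair consisting of a concatenated tuple and a tuple of lengths, as in Section~\ref{se:summation_arb_superstructre}. The second, since lengths are numbers, is a tuple over $F$ together with a number-sort tuple of lengths, where numbers are handled through $\mathbb N$ being definable in $\mathbb S(F,\mathbb N)$.

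The key steps, in order, are as follows.

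First, I write the formula $Step(\bar u,\bar v,\bar x)$ saying that $\bar v$ is obtained from $\bar u$ by deleting or inserting a single adjacent pair $y,y^{-1}$ with $y\in\bar x$ or $y^{-1}\in\bar x$. This is expressed through $\ell$ and $t$, namely by the existence of a position $j$ such that the components agree before $j$, shift by two after it, and the two removed components are mutually inverse letters.

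Second, I form $Chain(\bar a,\bar b,\bar x)$. This asserts the existence of an encoded tuple of tuples $(\bar c^1,\ldots,\bar c^r)$ with $\bar c^1=\bar a$, $\bar c^r=\bar b$, and $Step(\bar c^i,\bar c^{i+1},\bar x)$ for all $1\le i<r$. Extracting the $i$th inner tuple from the encoding is definable, since it is the segment of the concatenation located by partial sums of the length tuple, as in Lemma~\ref{le:summation}.

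Third, I consider the sentence
\[
\forall\bar x\ \forall\bar a,\bar b\ \bigl[FreeGen(\bar x)\wedge Let(\bar a,\bar x)\wedge Let(\bar b,\bar x)\wedge \textstyle\prod\bar a=\prod\bar b\bigr]\to Chain(\bar a,\bar b,\bar x),
\]
where $Let$ says that every component is a letter of $\bar x$ or the inverse of one, and $\prod$ is as in Lemma~\ref{le:arb_summation}. This holds in $\mathbb S(F,\mathbb N)$ by the classical fact about free groups. Given a free basis, two words with equal products both reduce by finitely many cancellations to the same reduced word, because $Free(\bar x)$ forces the reduced forms to coincide. Reversing one of these cancellation sequences and concatenating it with the other gives the required finite chain.

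Fourth, by elementary equivalence the sentence holds in $\mathcal M$. There the quantifiers over tuples range over nonstandard tuples, so the witnessing chain is a nonstandard tuple of nonstandard tuples, which is exactly the conclusion of the proposition.

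I expect the main obstacle to be the second step: making sure the ``tuple of tuples'' is honestly encoded by a first-order formula of $\mathbb S(F,\mathbb N)$, so that transferring the sentence genuinely produces a nonstandard tuple of nonstandard tuples rather than a merely external sequence. I would handle this with the concatenation-plus-lengths encoding. In $\mathbb S(F,\mathbb N)$, every finite sequence of finite tuples is coded this way, and the extraction formula defines the $i$th component uniformly. Once the encoding is fixed, the same formula read in $\mathcal M$ serves as the definition of a nonstandard tuple of nonstandard tuples, matching the usage in Section~\ref{se:free_group}.
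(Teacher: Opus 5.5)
Your proposal is correct and follows the same route as the paper. Both write the statement as a first-order sentence of $\mathbb S(F,\mathbb N)$, observe that it holds there by the classical fact about free groups, and transfer it to $\widetilde{\mathbb S}(\widetilde F,\nstdN)$ by elementary equivalence; your $Step$ and $Chain$ formulas and the concatenation-plus-lengths encoding of tuples of tuples fill in details that the paper's one-line proof leaves implicit.
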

\begin{proof}
    Notice that the statement can be written as a formula $\varphi$ of $\widetilde{\mathbb S}(\widetilde{F}, \nstdN)$.
    Since $\mathbb S(F,\mathbb N)\models \varphi$, it follows $\widetilde{\mathbb S}(\widetilde{F}, \nstdN)\models \varphi$.
\end{proof}

\subsection{Subgroups of nonstandard free group.}\label{se:nstd_subgroups}
For our use in the remaining part of the paper, we establish terminology concerned with subgroups and their generators.
Finitely generated subgroups could be generalized to the nonstandard case in four ways.
Informally speaking, we can consider finite or nonstandard generating sets, and we can consider finite or nonstandard products of elements of generating sets.
Each of these four options results in a subgroup of $\widetilde{F}$, however, not all of them can viewed as natural generalizations of finite subgroups.

By a \emph{standard} subgroup $G$ of $\widetilde{F}$ we mean a subgroup in the usual sense, that is, a subset $G\subseteq \widetilde{F}$ closed under group inversion and group product.
By a \emph{nonstandard} subgroup of $\widetilde{F}$ we mean a standard subgroup that is additionally closed under nonstandard products.
We view the latter object as the natural one in the context of this paper, and whenever we say subgroup in the sequel, we mean the nonstandard subgroup.
Correspondingly, the notation $G\le \widetilde{F}$ indicates that $G$ is a nonstandard subgroup.
Crucially, observe that if $G$ is a standard subgroup of $\widetilde{F}$ and is definable in $\widetilde{\mathbb S}(\widetilde{\mathbb Z}, \nstdN)$ (or in $\widetilde{\mathbb S}(\widetilde{F},\nstdN)$), then it is a nonstandard subgroup.

Consider a set $U=\{u_\alpha:\alpha\in A\}$ of elements of $\widetilde{F}$. For example, elements of $U$ can be enumerated by a tuple (finite or infinite nonstandard) $\bar u=(u_1,\ldots,u_k)$, $k\in\nstdN$, of elements of $\widetilde{F}$. In that case, with abuse of terminology, we do not distinguish between sets and tuples and write $U=\bar u$. $U$ can also be a set whose elements are not organized in a tuple, for example $U=\{x_2^{-j}x_1x_2^j:j\in\nstdN\}$.

For each $U$ as above, we can consider two subgroups:
\[
\begin{split}
G_s &= \langle U\rangle_s=\left\{u_{i_1}^{\varepsilon_1}\cdots u_{i_m}^{\varepsilon_m} \mid m\in\mathbb N, u_{i_j}\in U, \varepsilon_i=\pm 1\right\}\mbox{ and }\\
G_n &= \langle U\rangle_n=\left\{u_{i_1}^{\varepsilon_1}\cdots u_{i_m}^{\varepsilon_m} \mid m\in\nstdN, u_{i_j}\in U, \varepsilon_i=\pm 1\right\}.
\end{split}
\]
Notice that $G_s$ is the subgroup generated by the elements of $U$ in the usual sense, that is, the minimal subgroup that contains the elements of $U$.
We refer to $G_s$ as the standard subgroup generated by the elements of $U$.
Respectively, $G_n$ is the minimal nonstandard subgroup generated the elements of $U$. Notice that in the case when $U=\bar u$ is a tuple (finite or nonstandard) or, more generally, when $U$ is definable in $\widetilde{\mathbb S}(\widetilde{\mathbb Z}, \nstdN)$, $G_n$ is the minimal \emph{definable in $\widetilde{\mathbb S}(\widetilde{\mathbb Z}, \nstdN)$} subgroup of $\widetilde{F}$ that contains the elements of $\bar u$.
We refer to $G_n$ as the nonstandard subgroup generated by the elements of $\bar u$.
In the sequel, the notation $G=\langle U\rangle$ (or $G=\langle u_1,\ldots,u_k\rangle$ if $U=\bar u$) means the nonstandard subgroup $G=G_n$.

If $G$ is a subgroup of $\widetilde{F}$ and there is a tuple $\bar u=(u_1,\ldots,u_k)$ s.t. $G=\langle u_1,\ldots,u_k\rangle$ (with all of the above conventions), we say that $G$ is \emph{finitely generated} if $k\in\mathbb N$, and \emph{nonstandardly generated} if $k\in\nstdN$ (in particular, notice that a finitely generated subgroup is also nonstandardly generated). For example, it is easy to see that a subgroup generated by all elements of the form $x_2^{-j}x_1x_2^j$, $j\in\nstdN$, is not a nonstandardly generated subgroup.

\subsection{Properties of nonstandard free group.}
Notice that all of the following can be expressed by a weak second-order formula of $F$ (therefore, explicating interpretation, by a weak second-order formula of $\mathbb Z$):
\begin{enumerate}
    \item Elements $x_1,\ldots,x_r\in F$ freely generating $F$.
    \item The statement that two group words $w_1,w_2$ in free generators of $F$ are equal as elements of $F$ if and only if there is a fine sequence of free cancellation or insertions that takes $w_1$ to $w_2$.
    \item $y_1,\ldots,y_n$ being free generators of $\langle y_1,\ldots,y_n\rangle$.
    \item Membership of $x$ in a subgroup generated by $y_1,\ldots,y_k$.
    \item The statement that every finitely generated subgroup of $F$ is free.
    \item The statement that every finitely generated subgroup of $F$ has a Nielsen set of generators.
    \item The statement that $F$ possesses a Lyndon length function.
    \item Howson property, that is the statement that intersection of two finitely generated subgroups is finitely generated.
    \item Finite (nonstandard) index.
    \item Greenberg--Stallings theorem, that is, the statement that every finitely generated subgroup of $F$ is of finite index in its commensurator.
\end{enumerate}
(As a side note, we mention that in a way, all ``reasonable'' properties of a structure are weak second-order properties.)
All of the above have nonstandard analogues for $\widetilde{F}$.
We refrain from detailing every item on the above list.
We give two illustrations: we inspect centalizers in $\widetilde{F}$ and we offer a nonstandard version of the Howson property.

\begin{proposition}\label{pr:centralizers}
    Centralizers in $\widetilde{F}$ are isomorphic to the additive group~$\widetilde{\mathbb Z}^+$.
\end{proposition}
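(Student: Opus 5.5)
Throughout, ``centralizers'' means centralizers $C(g)$ of nontrivial elements $g\in\widetilde F$; for $g=1$ the centralizer is all of $\widetilde F$, which is non-abelian, so that case is excluded. The plan is to transfer, via elementary equivalence $\widetilde{\mathbb S}(\widetilde{F},\nstdN)\equiv\mathbb S(F,\mathbb N)$ as in Remark~\ref{re:nonstd_tuples}, the standard fact that every nontrivial $g\in F$ has centralizer $\langle r\rangle\cong\mathbb Z$ for a unique non-proper-power root $r$. What does not transfer verbatim is the isomorphism with $\mathbb Z$ itself. So the idea is to express it by the map $n\mapsto r^n$, where $r^n$ is a (nonstandard) product in the sense of Lemma~\ref{le:arb_summation} and $n$ ranges over the integer sort. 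With this map in hand, the isomorphism $C(g)\cong\widetilde{\mathbb Z}^+$ will follow.

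First I fix the exponentiation map. In $\mathbb S(F,\mathbb N)$, and since $\mathbb Z$ is interpretable in $\mathbb N$ we may freely use integer exponents, the function $(r,n)\mapsto r^n$ is definable. For $n\ge 0$ it is $\prod$ of the constant tuple $(r,\ldots,r)$ of length $n$, and for $n<0$ it is $(r^{-1})^{|n|}$. By Lemma~\ref{le:arb_summation} and Remark~\ref{re:terms_generalization}, the same formula defines $r^n$ for all $n\in\widetilde{\mathbb Z}$ in the nonstandard structure.

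Second, I record first-order sentences of $\mathbb S(F,\mathbb N)$ that hold in the standard free group and hence in $\widetilde{\mathbb S}(\widetilde F,\nstdN)$:
\begin{enumerate}[(i)]
\item $\forall r\,\forall m,n\ r^{m+n}=r^m r^n$, proved by ordinary induction on $\mathbb N$ and then extended to $\mathbb Z$ by cases;
\item $\forall r\neq 1\,\forall n\ (r^n=1\to n=0)$, i.e. torsion-freeness;
\item $\forall g\neq 1\ \exists r\ \forall h\ \bigl(hg=gh\leftrightarrow \exists n\ h=r^n\bigr)$.
\end{enumerate}
Item (iii) is the standard description of centralizers in $F$, in which $r$ is the root of $g$. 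Each of these is literally a sentence about the list superstructure, since the power function is definable there.

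Third, fix $g\neq1$ in $\widetilde F$ and choose $r$ by (iii). Define $\phi\colon\widetilde{\mathbb Z}^+\to C(g)$ by $\phi(n)=r^n$. By (i) this map is a homomorphism, and by (iii) it is surjective onto $C(g)$. For injectivity, note that $r\neq1$ because $g\in C(g)$ and $g\neq 1$. If $r^m=r^n$, then (i) gives $r^{m-n}=1$, and (ii) gives $m=n$. Hence $C(g)\cong\widetilde{\mathbb Z}^+$.

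The main obstacle is justifying that the transfer really happens inside the list superstructure built over $\widetilde{\mathbb Z}$, so that the number sort appearing in the exponents is exactly the same $\widetilde{\mathbb Z}$ as in the statement. One must check that the interpretation of $\mathbb S(F,\mathbb N)$ in $\mathbb N$ from Proposition~\ref{pr:list_interpretation}, evaluated at $\nstdN$, has number sort $\nstdN$ and that its group sort is $\widetilde F$. I would argue this by composing the interpretations and noting that the number sort is interpreted identically. The remaining checks are routine: that (i)–(iii) hold in $F$ and are first-order in $\mathbb S(F,\mathbb N)$.
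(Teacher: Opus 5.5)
Your proof is correct and follows the same route as the paper: you transfer the cyclic-centralizer sentence from $\mathbb S(F,\mathbb N)$ to $\widetilde{\mathbb S}(\widetilde F,\nstdN)$, where the exponents range over $\widetilde{\mathbb Z}$. It is also more careful than the paper's one-line argument, whose formula quantifies over all $x$ (false at $x=1$ when the rank is at least $2$) and leaves implicit the transferred exponent law and torsion-freeness that you use to show $k\mapsto y^k$ is an isomorphism onto the centralizer.
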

\begin{proof}
    Consider the formula $\varphi$ of $\mathbb S(F,\mathbb N)$ given by
    \[
    \forall x\in F\ \exists y\in F\  \forall z\in F\  \exists k\in\mathbb Z\quad [x,z]=1\to z=y^k.
    \]
    We have $\mathbb S(F,\mathbb N)\models \varphi$ since centalizers in $F$ are cyclic. Then $\widetilde{\mathbb S}(\widetilde{F},\nstdN)\models \varphi$, and for $\widetilde{F}$, by Remark~\ref{re:nonstd_tuples}, the formual $\varphi$ takes form
    \[
    \forall x\in \widetilde{F}\ \exists y\in \widetilde{F}\  \forall z\in \widetilde{F}\  \exists k\in\widetilde{\mathbb Z}\quad [x,z]=1\to z=y^k,
    \]
    in other words, centralizer of $x$ in $\widetilde{F}$ is $\{y^k\mid k\in\widetilde{\mathbb Z}\}\cong \widetilde{\mathbb Z}^+$.    
\end{proof}

For Howson property, let $\varphi_{\mathrm{memb}}(z, \bar y)$ be a formula of $\mathbb S(F,\mathbb N)$ s.t. $\mathbb S(F,N)\models \varphi_{\mathrm{memb}}(g, \bar g)$ if and only if $g\in \langle g_1,\ldots,g_m\rangle$, where $\bar g=(g_1,\ldots,g_m)$.
Then the Howson property is recorded by formula $\Phi_{\mathrm{H}}$ as follows:
\[
\Phi_{\mathrm{H}}=\forall \bar y^{(1)}, \bar y^{(2)}\
\exists \bar y\ \forall x\in F\ \varphi_{\mathrm{memb}}(x, \bar y^{(1)})\wedge \varphi_{\mathrm{memb}}(x, \bar y^{(2)})\leftrightarrow \varphi_{\mathrm{memb}}(x, \bar y).
\]

For a nonstandard model $\widetilde{F}$ of $F$ we have, by Proposition~\ref{pr:list_interpretation} and Remark~\ref{re:nonstd_tuples} that $\widetilde{\mathbb S}(\widetilde{F},\nstdN)\models \Phi_{\mathrm{H}}$, that is, the following proposition holds.
\begin{proposition}\label{pr:howson_nonstd}
    If two subgroups $G_1,G_2\le \widetilde{F}$ are nonstandardly generated, then so is their intersection $G_1\cap G_2$.
\end{proposition}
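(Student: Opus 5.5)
The plan is to transfer the classical Howson property of $F$ to $\widetilde{F}$ by elementary equivalence, in the same way as Proposition~\ref{pr:centralizers}.

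\textbf{Step 1: define membership.} First I will make sure the formula $\varphi_{\mathrm{memb}}(z,\bar y)$ really exists in $\mathbb S(F,\mathbb N)$:
\[
\varphi_{\mathrm{memb}}(z,\bar y)=\exists \bar w\ \Bigl[\bigl(\forall 1\le i\le \ell(\bar w)\ \ w_i\in \bar y\vee w_i^{-1}\in\bar y\bigr)\wedge z=\prod \bar w\Bigr].
\]
Here $\prod$ is the definable iterated product from Lemma~\ref{le:arb_summation}, applied to the group multiplication of $F$. In $\mathbb S(F,\mathbb N)$ this formula holds exactly when $z\in\langle g_1,\ldots,g_m\rangle$. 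So $\Phi_{\mathrm{H}}$, as written before the proposition, is a genuine first-order sentence of $\mathbb S(F,\mathbb N)$. It is true there because the classical Howson property says the intersection of two finitely generated subgroups of a free group is finitely generated.

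\textbf{Step 2: transfer.} Since $F$ is absolutely interpretable in $\mathbb N$, Proposition~\ref{pr:list_interpretation} shows that $\mathbb S(F,\mathbb N)$ is absolutely interpretable in $\mathbb N$ via some $\Delta$. By Theorem~\ref{th:equiv0}, $\widetilde{\mathbb S}(\widetilde F,\nstdN)=\Delta(\nstdN)\equiv\mathbb S(F,\mathbb N)$, so $\widetilde{\mathbb S}(\widetilde F,\nstdN)\models\Phi_{\mathrm H}$. By Remark~\ref{re:nonstd_tuples}, the list quantifiers now range over nonstandard tuples over $\widetilde F$. So the transferred sentence says: for all nonstandard tuples $\bar u^{(1)},\bar u^{(2)}$ there is a nonstandard tuple $\bar u$ such that, for every $x\in\widetilde F$, $\varphi_{\mathrm{memb}}(x,\bar u^{(1)})\wedge\varphi_{\mathrm{memb}}(x,\bar u^{(2)})\leftrightarrow\varphi_{\mathrm{memb}}(x,\bar u)$.

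\textbf{Step 3: interpret the result.} The remaining work, and the main point needing care, is to check that in $\widetilde{\mathbb S}(\widetilde F,\nstdN)$ the formula $\varphi_{\mathrm{memb}}(x,\bar u)$ defines exactly the nonstandard subgroup $\langle \bar u\rangle=\langle\bar u\rangle_n$ of Section~\ref{se:nstd_subgroups}. By construction, $\varphi_{\mathrm{memb}}$ asserts that $x$ is a nonstandard product of entries of $\bar u$ and their inverses. The set of all such products is contained in $\langle\bar u\rangle_n$. It is itself definable, and it is closed under inversion and nonstandard products, because the corresponding closure statements hold in $\mathbb S(F,\mathbb N)$ and transfer. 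Hence it is a nonstandard subgroup containing $\bar u$, so it equals $\langle\bar u\rangle_n$.

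\textbf{Conclusion.} Given nonstandardly generated $G_1=\langle\bar u^{(1)}\rangle$ and $G_2=\langle\bar u^{(2)}\rangle$, Step 2 produces a tuple $\bar u$ with $G_1\cap G_2=\langle\bar u\rangle$. Since $\ell(\bar u)\in\nstdN$, the intersection $G_1\cap G_2$ is nonstandardly generated.
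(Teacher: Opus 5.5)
Your proposal is correct and follows the paper's argument: the paper also transfers the sentence $\Phi_{\mathrm{H}}$ from $\mathbb S(F,\mathbb N)$ to $\widetilde{\mathbb S}(\widetilde F,\nstdN)$ via Proposition~\ref{pr:list_interpretation} and Remark~\ref{re:nonstd_tuples}, and reads the result as the proposition. Your Step 3 fills in a detail the paper leaves implicit, namely that $\varphi_{\mathrm{memb}}(x,\bar u)$ defines exactly the minimal nonstandard subgroup $\langle\bar u\rangle_n$, which you show by transferring the closure statements.
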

Notice that this is not an entirely satisfactory generalization of the Howson property of free groups, since if the subgroups $G_1,G_2$ are generated by finite tuples $\bar g^{(1)}, \bar g^{(2)}$, respectively (as opposed to infinite nonstandard tuples), then the above proposition guarantees only the generation of $G_1\cap G_2$ by a nonstandard, not necessarily finite, tuple.
Below we offer a way to refine both the formula $\Phi_{\mathrm{H}}$ and the subsequent proposition.

Recall that there is an easy to see inequality connecting rank (number of free generators) of $G_1$, $G_2$, and $G_1\cap G_2$: $\mathop{\mathrm{rk}} G_1\cap G_2-1\le 2(\mathop{\mathrm{rk}} G_1-1)(\mathop{\mathrm{rk}} G_2-1)$ (see, for example,~\cite{Neumann:1957}). Consider an extended version $\Phi_{\mathrm{H}}^+$ of $\Phi_{\mathrm{H}}$:
\[
\begin{split}
\Phi_{\mathrm{H}}^+=\forall \bar y^{(1)}, \bar y^{(2)}\
\exists \bar y\
[&
\ell(y)\le 2(\ell(y^{(1)}-1)(\ell(y^{(2)}-1)+1 \\
&\wedge \forall x\in F\ \varphi_{\mathrm{memb}}(x, \bar y^{(1)})\wedge \varphi_{\mathrm{memb}}(x, \bar y^{(2)})\leftrightarrow \varphi_{\mathrm{memb}}(x, \bar y)].
\end{split}
\]
Then $\widetilde{\mathbb S}(\widetilde{F},\nstdN)\models \Phi_{\mathrm{H}}^+$ can be recorded as the following proposition.
\begin{proposition}\label{pr:howson_finite1}
    If two subgroups $G_1,G_2\le \widetilde{F}$ are nonstandardly generated by $m_1$ and $m_2$ elements of $\widetilde{F}$, respectively, then their intersection $G_1\cap G_2$ is nonstandardly generated by at most $2(m_1-1)(m_2-1)+1$ elements of $\widetilde{F}$.
\end{proposition}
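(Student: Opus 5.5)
The plan is to transfer the weak second-order sentence $\Phi_{\mathrm{H}}^+$ from $\mathbb S(F,\mathbb N)$ to $\widetilde{\mathbb S}(\widetilde F,\nstdN)$ and then read off what it says there, in the same way as for Proposition~\ref{pr:howson_nonstd}.

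\textbf{Step 1: $\Phi_{\mathrm{H}}^+$ holds in $\mathbb S(F,\mathbb N)$.} This is the classical Hanna Neumann-type bound cited above. If $\bar y^{(1)},\bar y^{(2)}$ are finite tuples generating subgroups $H_1,H_2$ of $F$, then $H_i$ is free of rank at most $\ell(\bar y^{(i)})$. Moreover $H_1\cap H_2$ is free of rank at most $2(\mathop{\mathrm{rk}}H_1-1)(\mathop{\mathrm{rk}}H_2-1)+1$. This bound is monotone in the ranks when both ranks are at least $1$.

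The degenerate cases need separate attention. If some $\ell(\bar y^{(i)})=0$, or some $H_i$ is trivial, then $H_1\cap H_2$ is trivial. In that case we take $\bar y$ to be the one-element tuple $(1)$, which is admissible whenever the right-hand side is at least $1$. If one $\ell(\bar y^{(i)})=1$, the bound gives $1$, and the intersection is cyclic, which is consistent. I would state the sentence with a harmless convention (say, lengths at least $1$) so that it is literally true in $F$.

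\textbf{Step 2: transfer to the nonstandard model.} By Proposition~\ref{pr:list_interpretation}, $\mathbb S(F,\mathbb N)$ is absolutely interpretable in $\mathbb N$. Hence $\widetilde{\mathbb S}(\widetilde F,\nstdN)\equiv \mathbb S(F,\mathbb N)$ by Theorem~\ref{th:equiv0}, and so $\widetilde{\mathbb S}(\widetilde F,\nstdN)\models\Phi_{\mathrm{H}}^+$.

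\textbf{Step 3: interpret the transferred sentence.} By Remark~\ref{re:nonstd_tuples}, the quantifiers over $\bar y^{(1)},\bar y^{(2)},\bar y$ now range over nonstandard tuples over $\widetilde F$, and the length inequality is evaluated in $\nstdN$.

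The key point is the meaning of $\varphi_{\mathrm{memb}}(x,\bar y)$ in the nonstandard structure. In $F$ it says that $x$ is a product of a finite tuple of elements of $\bar y^{\pm1}$. In $\widetilde{\mathbb S}(\widetilde F,\nstdN)$ the same formula says that $x$ is a nonstandard product, in the sense of Lemma~\ref{le:arb_summation}, of a nonstandard tuple of elements of $\bar y^{\pm1}$. That is exactly membership in the nonstandard subgroup $\langle \bar y\rangle=G_n$ of Section~\ref{se:nstd_subgroups}.

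Thus, given $G_i=\langle \bar g^{(i)}\rangle$ with $\ell(\bar g^{(i)})=m_i\in\nstdN$, we obtain a nonstandard tuple $\bar g$ with $\ell(\bar g)\le 2(m_1-1)(m_2-1)+1$ and $\langle\bar g\rangle=G_1\cap G_2$. When $m_1,m_2\in\mathbb N$ the bound is standard, so $\ell(\bar g)$ is standard and the intersection is finitely generated.

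\textbf{Main obstacle.} The main care is in Step 3: making sure that $\varphi_{\mathrm{memb}}$ is written in terms of $\prod$ over list-sort elements, and not via some other coding. Only with that form does its transfer match the definition of $G_n$, as opposed to $G_s$. Getting the degenerate-length conventions right in Step 1 is the other minor point.
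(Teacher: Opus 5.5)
Your proposal is correct and follows the paper's own argument. Both verify $\Phi_{\mathrm{H}}^+$ in $\mathbb S(F,\mathbb N)$ via the Hanna Neumann-type rank bound, transfer it to $\widetilde{\mathbb S}(\widetilde F,\nstdN)$ by elementary equivalence, and read it off through Remark~\ref{re:nonstd_tuples}; your handling of the degenerate lengths is a useful addition that the paper omits.

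One small correction to your ``main obstacle'': the particular form of $\varphi_{\mathrm{memb}}$ does not matter. Any two formulas defining membership in $\mathbb S(F,\mathbb N)$ are equivalent there, and that equivalence is a sentence which transfers, so the transferred formula always defines the nonstandard subgroup $G_n$.
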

The above version does not have the deficiency we pointed out for Proposition~\ref{pr:howson_nonstd}. Separately, we note that the bound improves to at most $(m_1-1)(m_2-1)+1$ if we take into account the positive resolution of the Hanna Neumann conjecture~\cite{Mineyev:2012,Friedman:2015}.


\section{Nonstandard Groups}\label{se:presentations}
As we mention at the end of Section~\ref{se:free_group}, the construction therein applies to an arbitrary finitely generated structure with arithmetic multiplication table.
Consider an interpretation of such a group $G$ in $\mathbb S(\mathbb Z,\mathbb N)$, which allows us to obtain a nonstandard model $\widetilde{G}=G(\nstdN)$.
The latter is well-defined by Theorem~\ref{th:equiv2}.
Similar to Theorems~\ref{th:free_group_ultrapower} and~\ref{th:free_group_ultrapower_gen}, assuming the respective versions of the continuum-hypothesis, we can straightforwardly see that $G^*\cong \widetilde{G}$, where $G^*$ denotes the appropriate ultrapower of $G$. To this end, we formulate the following theorem.
\begin{theorem}\label{th:arbitrary_group_ultrapower_gen}
Let $G$ be a finitely generated group with arithmetic word problem. Suppose $\kappa$ is an infinite cardinal such that $2^\kappa=\kappa+$. Let $D$ be a non-principal ultrafilter on $\kappa$ and $G^*\cong \prod\limits_{i\in \omega}G\ /D$ and $\mathbb Z^*\cong \prod\limits_{i\in \omega}\mathbb Z\ /D$ be the corresponding ultrapowers. Then $G^*\cong \widetilde F$, where $\widetilde{G}$ denotes the nonstandard model $G(\mathbb Z^*)$.
\end{theorem}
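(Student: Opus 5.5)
The plan follows the proof of Theorem~\ref{th:free_group_ultrapower}: show that both $G^*$ and $\widetilde G$ are saturated models of $\Th(G)$ of the same cardinality, and then invoke uniqueness of saturated models. (The conclusion should read $G^*\cong\widetilde G$; the $\widetilde F$ in the statement is a typo. The ultrapowers are indexed by $\kappa$, not $\omega$.)

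First I set up the interpretation. Since $G$ is finitely generated with arithmetic word problem, fix generators and an arithmetical enumeration of words. The set of pairs of words representing equal elements is then arithmetic. This gives an absolute interpretation $G\cong\Gamma(\mathbb Z)$: the domain is the set of codes of words, the equivalence relation is equality in $G$, and multiplication is concatenation. By Theorem~\ref{th:equiv2}, $\widetilde G=\Gamma(\mathbb Z^*)$ does not depend on the choice of $\Gamma$. By Theorem~\ref{th:equiv0}, $\widetilde G\equiv G$.

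Next I establish saturation. Because $D$ is non-principal on $\kappa$ (I would take it countably incomplete, e.g. regular, as is standard), the ultrapowers $G^*$ and $\mathbb Z^*$ are $\kappa^+$-saturated. Their cardinalities are at most $\aleph_0^\kappa=2^\kappa=\kappa^+$ and at least $\kappa^+$, so both have cardinality exactly $\kappa^+$, and under $2^\kappa=\kappa^+$ both are saturated. I then transfer saturation through the interpretation. Let $p$ be a type over fewer than $\kappa^+$ parameters in $\Gamma(\mathbb Z^*)$. Choose representative tuples of the parameters and replace each formula by its $\Gamma$-translation, relativized to $U_\Gamma$ and with equality replaced by $E_\Gamma$. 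This yields a finitely satisfiable set of formulas in $\mathbb Z^*$ in $\dim\Gamma$ variables over fewer than $\kappa^+$ parameters. By saturation of $\mathbb Z^*$ (applied to $n$-types), it is realized by some tuple, and the class of that tuple realizes $p$. For cardinality, $|\widetilde G|\le|\mathbb Z^*|^{\dim\Gamma}=\kappa^+$. For the lower bound, $\widetilde G$ contains infinite cyclic centralizers copying $\mathbb Z^*$, or I can simply note that a $\kappa^+$-saturated model of a theory with infinite models has size at least $\kappa^+$. (If $G$ is finite, both groups equal $G$ and the claim is trivial.)

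Finally, $G^*\equiv G\equiv\widetilde G$, and both are saturated of cardinality $\kappa^+$, so they are isomorphic by uniqueness of saturated models (\cite[Theorem 4.3.20]{Marker}). The main point needing care is the transfer of saturation through $\Gamma$ together with the cardinality count. The counting must use that the language and the interpretation are finite, or countable, and I would check explicitly that types in several variables are handled. If the ultrafilter is not assumed regular, I would first reduce to $\kappa^+$-goodness by choosing $D$ appropriately, as in the free group case.
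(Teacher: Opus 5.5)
Your outline is the paper's own argument: both models are saturated of the same cardinality, so uniqueness of saturated models gives the isomorphism. The details you add are fine: transferring saturation through $\Gamma$ via representative tuples, and the bound $|\widetilde G|\le|\mathbb Z^*|^{\dim\Gamma}$. The step that fails is the claim that $G^*$ and $\mathbb Z^*$ are $\kappa^+$-saturated because $D$ is non-principal (or countably incomplete, or regular).

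\begin{itemize}
\item Countable incompleteness only gives $\aleph_1$-saturation.
\item Keisler's theorem gives $\kappa^+$-saturation only when $D$ is also $\kappa^+$-good, and regularity alone does not supply this for $\kappa>\omega$. Goodness is genuinely needed here: $\Th(\mathbb Z)$ has the strict order property, so it is maximal in Keisler's order.
\item For an arbitrary non-principal $D$ it is worse. Take $\kappa=\omega_1$ under GCH and let $D$ contain a countable set. Then $\mathbb Z^*\cong\mathbb Z^\omega/D'$ has size $2^{\aleph_0}=\aleph_1<\kappa^+$, so both your saturation claim and your cardinality claim fail.
\end{itemize}

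Your fallback of ``choosing $D$ appropriately'' proves a weaker theorem, since $D$ is given in the statement. The paper's proof, which just repeats the $\omega$ case with $\kappa$ in place of $\omega$, has exactly the same hole. For $\kappa=\omega$ the gap is harmless, because every non-principal ultrafilter on $\omega$ is countably incomplete and so gives $\aleph_1$-saturated ultrapowers.

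The repair removes the need for saturation and for the hypothesis $2^\kappa=\kappa^+$: interpretations commute with ultrapowers. Let $G\cong\Gamma(\mathbb Z)$ with coordinate map $\mu\colon U_\Gamma(\mathbb Z)\to G$, and identify $(\mathbb Z^I/D)^n$ with $(\mathbb Z^n)^I/D$, where $n=\dim\Gamma$.

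\begin{itemize}
\item By \L o\'s, each element of $U_\Gamma(\mathbb Z^I/D)$ has a representative $(\bar a_i)_{i\in I}$ with every $\bar a_i\in U_\Gamma(\mathbb Z)$.
\item The map $[(\bar a_i)]\mapsto[(\mu(\bar a_i))]$ is then a well-defined surjection onto $G^I/D$.
\item \L o\'s applied to $E_\Gamma$ shows its fibres are exactly the $E_\Gamma(\mathbb Z^I/D)$-classes.
\item \L o\'s applied to the formulas $Q_\Gamma$ for $\cdot$, ${}^{-1}$, $e$ shows the operations are respected.
\end{itemize}

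Hence $\Gamma(\mathbb Z^I/D)\cong G^I/D$ for every ultrafilter $D$ on every set $I$, in ZFC. This is the form in which the result is announced in Section~\ref{se:ultrapowers}.

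A minor point: your ``infinite cyclic centralizers'' lower bound does not apply to arbitrary $G$ (e.g.\ infinite torsion groups). Your alternative argument for $|\widetilde G|\ge\kappa^+$ is correct, given saturation.
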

\begin{proof}
The proof repeats that of Theorem~\ref{th:free_group_ultrapower} with $\kappa$ instead of $\omega$ and $G$ instead of $F$.
\end{proof}

We note that $G\le \widetilde{G}$ and that $G$ is an elementary submodel of $\widetilde{G}$. 

\begin{theorem}\label{th:elementary_submodel}
Let $G$ be a finitely generated group with arithmetic word problem. Then $G$ is an elementary submodel of $\widetilde{G}=G(\widetilde{\mathbb Z})$.
\end{theorem}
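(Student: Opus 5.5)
Our plan is to transfer the elementary embedding $\mathbb Z\preceq\widetilde{\mathbb Z}$ (the standard part of any model of arithmetic, Section~\ref{se:nonstandard_arithmetic}) through the interpretation. Fix an absolute interpretation $G\cong\Gamma(\mathbb Z)$ of dimension $n$, with coordinate map $\mu_\Gamma\colon A_\Gamma(\mathbb Z)\to G$. Since $\mathbb Z$ is an elementary substructure of $\widetilde{\mathbb Z}$, we have $A_\Gamma(\mathbb Z)=U_\Gamma(\mathbb Z^n)=U_\Gamma(\widetilde{\mathbb Z}^n)\cap\mathbb Z^n$. Likewise, for tuples from $\mathbb Z^n$, the relation $E_\Gamma$ and the formulas $Q_\Gamma$ for multiplication, inversion and identity hold in $\mathbb Z$ exactly when they hold in $\widetilde{\mathbb Z}$. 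Consequently, sending the $\sim_\Gamma$-class of $\bar a\in A_\Gamma(\mathbb Z)$ in $\Gamma(\mathbb Z)$ to its $\sim_\Gamma$-class in $\Gamma(\widetilde{\mathbb Z})$ is well defined and injective, because $E_\Gamma(\bar a,\bar a')$ is absolute between the two models. It is also a homomorphism, because the graphs of the operations are absolute in the same way. Composing with $\bar\mu_\Gamma^{-1}$ gives an embedding $\iota\colon G\to\widetilde G=\Gamma(\widetilde{\mathbb Z})$. By Theorem~\ref{th:equiv2}, the resulting copy of $G$ does not depend on $\Gamma$ up to the definable isomorphism, so the statement is meaningful.

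Next we show that $\iota$ is elementary. The standard fact about interpretations is that every group formula $\varphi(y_1,\dots,y_m)$ has a translation $\varphi_\Gamma(\bar x_1,\dots,\bar x_m)$ in the language of rings. This translation relativizes quantifiers to $U_\Gamma$, replaces equality by $E_\Gamma$, and replaces atomic formulas by the $Q_\Gamma$. It satisfies $\Gamma(\mathbb B)\models\varphi([\bar b_1],\dots,[\bar b_m])$ if and only if $\mathbb B\models\varphi_\Gamma(\bar b_1,\dots,\bar b_m)$, for every $\mathbb B$ and every choice of tuples $\bar b_i\in A_\Gamma(\mathbb B)$; this is the mechanism behind Theorem~\ref{th:equiv0}, and we prove it by induction on $\varphi$. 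Now take $g_1,\dots,g_m\in G$ and representatives $\bar a_i\in A_\Gamma(\mathbb Z)$. Then
\[
G\models\varphi(\bar g)\iff \mathbb Z\models\varphi_\Gamma(\bar a_1,\dots,\bar a_m)\iff \widetilde{\mathbb Z}\models\varphi_\Gamma(\bar a_1,\dots,\bar a_m)\iff \widetilde G\models\varphi(\iota\bar g).
\]
The middle equivalence uses $\mathbb Z\preceq\widetilde{\mathbb Z}$ applied to the parameters $\bar a_i\in\mathbb Z^n$. Hence $\iota(G)\preceq\widetilde G$.

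The main obstacle is the quantifier step of the translation lemma, in particular making sure that quantifiers over $\widetilde G$ really range over all classes of $U_\Gamma(\widetilde{\mathbb Z})$, including the nonstandard tuples. This works because the translation quantifies over all $n$-tuples satisfying $U_\Gamma$ in whichever model is considered, so no special treatment of nonstandard elements is needed.

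If instead $\widetilde G$ is presented via $\widetilde{\mathbb S}(\widetilde{\mathbb Z},\nstdN)$, as in Section~\ref{se:free_group}, the same argument applies. One composes with the interpretation of the list superstructure in $\mathbb N$ from Proposition~\ref{pr:list_interpretation}, and uses $\mathbb S(\mathbb Z,\mathbb N)\preceq\widetilde{\mathbb S}(\widetilde{\mathbb Z},\nstdN)$, which follows by the same transfer from $\mathbb N\preceq\nstdN$.
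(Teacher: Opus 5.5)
Your proposal is correct and follows the same approach as the paper, whose proof is the one-line observation that $\widetilde G$ is interpreted in $\nstdN$ and $\mathbb N\preceq\nstdN$. You unpack that line through the translation lemma $\varphi\mapsto\varphi_\Gamma$ and the elementary inclusion $\mathbb Z\preceq\widetilde{\mathbb Z}$; working with $\mathbb Z$ rather than $\mathbb N$ makes no difference, since the two are bi-interpretable.
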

\begin{proof}
Follows since $\widetilde{G}$ is interpreted in $\nstdN$ and $\mathbb N$ is an elementary substructure of~$\nstdN$.
\end{proof}

\begin{theorem}\label{th:equivalent_models}
    If $G$ be a finitely generated group with arithmetic word problem and $G'\equiv G$, then there is a nonstandard group $\widetilde{G}$ such that $G'$ is an elementary submodel of $\widetilde{G}$ and $|\widetilde{G}|\le |G'|$.
\end{theorem}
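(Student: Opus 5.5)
We prove the statement by compactness applied to a two-sorted structure in which $G$ sits together with its interpretation in $\mathbb Z$, followed by a downward L\"owenheim--Skolem argument.

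First dispose of the trivial case. If $G$ is finite, then $G'\equiv G$ forces $G'\cong G$, and we can take $\widetilde{\mathbb Z}=\mathbb Z$, so that $\widetilde G=G$. Assume from now on that $G$ is infinite; then so is $G'$. Since $G$ has arithmetic word problem, fix an absolute interpretation $G\cong\Gamma(\mathbb Z)$ with coordinate map $\mu_\Gamma\colon A_\Gamma\to G$, where $A_\Gamma\subseteq\mathbb Z^n$ and $n=\dim\Gamma$.

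Form the two-sorted structure $\mathcal M=\langle \mathbb Z, G; \mu\rangle$, where the first sort carries the ring language, the second sort carries the group language, and $\mu\subseteq \mathbb Z^n\times G$ is the graph of $\mu_\Gamma$. The theory $\Th(\mathcal M)$ contains first-order sentences expressing the following:
\begin{itemize}
\item $\mu$ is a map from $U_\Gamma$ onto the group sort;
\item $\mu(\bar x)=\mu(\bar x')$ if and only if $E_\Gamma(\bar x,\bar x')$;
\item $\mu$ carries the formulas $\cdot_\Gamma$, $(^{-1})_\Gamma$ and $e_\Gamma$ to the group operations.
\end{itemize}
Consequently, in every model $\mathcal M'=\langle \widetilde{\mathbb Z},H;\mu'\rangle$ of $\Th(\mathcal M)$ we have $\widetilde{\mathbb Z}\equiv\mathbb Z$, and $\mu'$ induces an isomorphism $\Gamma(\widetilde{\mathbb Z})\cong H$. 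That is, $H$ is a nonstandard group $\widetilde G=G(\widetilde{\mathbb Z})$. This observation is the key step, since it guarantees that the group sort of any model of $\Th(\mathcal M)$ is exactly $\Gamma$ of its ring sort, and not merely a group elementarily equivalent to $G$.

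Next, add new constants $c_g$ for $g\in G'$, living in the group sort, and let $\Sigma=\Th(\mathcal M)\cup \mathrm{ElDiag}(G')$. Here the elementary diagram $\mathrm{ElDiag}(G')$ consists of all group-language sentences with parameters from $G'$ that hold in $G'$. We show $\Sigma$ is finitely satisfiable. A finite fragment of $\mathrm{ElDiag}(G')$ is equivalent to a single sentence $\varphi(c_{g_1},\ldots,c_{g_m})$ with $G'\models\varphi(\bar g)$. Then $G'\models\exists\bar y\,\varphi(\bar y)$, and since $G'\equiv G$ also $G\models\exists\bar y\,\varphi(\bar y)$. Choosing witnesses in the group sort of $\mathcal M$ and interpreting the constants by them satisfies the fragment. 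By compactness, $\Sigma$ has a model. By the downward L\"owenheim--Skolem theorem, $\Sigma$ then has a model $\mathcal M'$ of cardinality at most $|G'|+|\Sigma|=|G'|$, using that the language is countable apart from the $|G'|$ new constants and that $G'$ is infinite.

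Finally, read off the conclusion. In $\mathcal M'=\langle\widetilde{\mathbb Z},H;\mu'\rangle$ the map $g\mapsto c_g^{\mathcal M'}$ is an elementary embedding $G'\to H$, because $\mathcal M'$ satisfies $\mathrm{ElDiag}(G')$. By the second paragraph, $H\cong\Gamma(\widetilde{\mathbb Z})=\widetilde G$. Moreover $|\widetilde G|\le|\mathcal M'|\le|G'|$, and since $G'$ embeds in $\widetilde G$, in fact $|\widetilde G|=|G'|$. Identifying $G'$ with its image gives the statement.

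The only delicate point is the one flagged in the second paragraph: the surjectivity and homomorphism properties of $\mu$ must be included in the theory, so that no "extra" group elements outside $\Gamma(\widetilde{\mathbb Z})$ can appear. Everything else is routine compactness and L\"owenheim--Skolem.
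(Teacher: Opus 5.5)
Your argument is correct, but it takes a different route from the paper. The paper gives no argument of its own for this statement; it only cites a general theorem on interpretations (\cite[Theorem 5, Section 5.3]{Daniyarova-Myasnikov:I}), of which the statement is a special case.

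You instead prove the special case directly. Expanding to the two-sorted structure $\langle \mathbb Z, G;\mu\rangle$, with the coordinate map in the language, forces the group sort of every model of the theory to be exactly $\Gamma$ of its ring sort. The elementary diagram of $G'$, compactness, and downward L\"owenheim--Skolem then finish the proof.

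The citation buys brevity and generality, since the cited result concerns interpretations of arbitrary structures. Your route buys transparency and slightly more information. You obtain $|\widetilde G|=|G'|$ and also $|\widetilde{\mathbb Z}|\le|G'|$. Your argument also shows that finite generation plays no role in the existence claim; it matters only, via Theorem~\ref{th:equiv2}, for $G(\widetilde{\mathbb Z})$ to be independent of $\Gamma$.

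One minor point: for countable $G'$, your $\widetilde{\mathbb Z}$ may turn out to be $\mathbb Z$ itself. The paper's usage in Section~\ref{se:motivation} allows this. If you want $\widetilde{\mathbb Z}\not\cong\mathbb Z$, add a ring-sort constant $d$ with axioms $d>1+\cdots+1$ ($n$ summands) for every $n$. Finite satisfiability and the cardinality bound are unaffected when $G'$ is infinite.
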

\begin{proof} This is a particular case of~\cite[Theorem 5, Section 5.3]{Daniyarova-Myasnikov:I}.
\end{proof}

\begin{theorem}\label{th:saturated}
    Let $G$ be a finitely generated group with arithmetic word problem.  Suppose $\kappa$ is an infinite cardinal such that $2^\kappa=\kappa+$. If $G'\equiv G$, $G'$ is saturated, and $|G'|=\kappa$, then $G'\cong \widetilde{G}$ for some nonstandard model $\widetilde{G}$ of $G$.
\end{theorem}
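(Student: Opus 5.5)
We will follow the approach of Theorem~\ref{th:free_group_ultrapower}: build a saturated nonstandard model of $G$ of cardinality $\kappa$ and then use uniqueness of saturated models. Fix an absolute interpretation $G\cong\Gamma(\mathbb Z)$. Such an interpretation exists because $G$ is finitely generated with arithmetic word problem, via $\mathbb S(\mathbb Z,\mathbb N)$ and Proposition~\ref{pr:list_interpretation}. Since $2^{\kappa}=\kappa^+$ (GCH at $\kappa$), and in particular since $\kappa^{<\kappa}=\kappa$ or $\kappa$ is handled as in the earlier theorems, the countable complete theory $\Th(\mathbb Z)$ has a saturated model $\widetilde{\mathbb Z}$ of cardinality $\kappa$. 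One way to get it is to take a suitable ultrapower $\mathbb Z^I/D$ as in Theorem~\ref{th:arbitrary_group_ultrapower_gen}. Such an ultrapower is $\kappa^+$-good, hence $\kappa^+$-saturated, of size $2^\kappa=\kappa^+$; this size mismatch is worth watching. The alternative is the standard existence theorem for saturated models under the relevant cardinal arithmetic, e.g.~\cite[Theorem 4.3.12]{Marker}. I would state the cardinal hypothesis so that the saturated model has exactly size $\kappa$.

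Next, set $\widetilde G=\Gamma(\widetilde{\mathbb Z})$. By Theorem~\ref{th:equiv0}, $\widetilde G\equiv G\equiv G'$. The key step, which the proof of Theorem~\ref{th:free_group_ultrapower} calls straightforward, is that saturation transfers along an absolute interpretation. Let $P\subseteq\widetilde G$ with $|P|<\kappa$, and let $p(x)$ be a type over $P$ finitely satisfiable in $\widetilde G$. Choose representatives $\bar b_g\in U_\Gamma(\widetilde{\mathbb Z})$ for the elements $g\in P$; this gives fewer than $\kappa$ parameters in $\widetilde{\mathbb Z}$. Translate each formula $\varphi(x,\bar g)\in p$ into $U_\Gamma(\bar x)\wedge\varphi_\Gamma(\bar x,\bar b)$, where $\varphi_\Gamma$ is the standard translation and $\bar x$ is a $\dim\Gamma$-tuple of variables. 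The translated set is finitely satisfiable in $\widetilde{\mathbb Z}$, so by saturation it is realized by some tuple $\bar a$. The class of $\bar a$ then realizes $p$. (Types in several variables are handled the same way, since saturation of $\widetilde{\mathbb Z}$ covers $n$-types.)

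For cardinality, $|\widetilde G|\le|\widetilde{\mathbb Z}|^{\dim\Gamma}=\kappa$. Conversely, $\widetilde G$ is an infinite saturated model and is $\kappa$-saturated, so $|\widetilde G|\ge\kappa$. Without that argument one can note that $\widetilde G$ contains a copy of the centralizer-type definable structure. More simply, $G$ is infinite unless $G$ is finite, and the finite case is trivial because then $G'\cong G\cong\widetilde G$. Hence $|\widetilde G|=\kappa$. Finally, $G'$ and $\widetilde G$ are saturated models of the same complete theory with the same cardinality $\kappa$, so they are isomorphic by~\cite[Theorem 4.3.20]{Marker}. The main obstacle is the first step: producing a saturated $\widetilde{\mathbb Z}$ of exactly cardinality $\kappa$ from the stated hypothesis. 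If needed, I would invoke the general existence theorem for saturated models rather than the ultrapower construction.
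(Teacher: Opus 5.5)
Your outline is the same as the paper's proof: take a saturated $\widetilde{\mathbb Z}$, set $\widetilde G=\Gamma(\widetilde{\mathbb Z})$, transfer saturation through the absolute interpretation, count cardinality, and conclude by uniqueness of saturated models. Your saturation-transfer and counting steps are correct; note that the finite case is vacuous, since $|G'|=\kappa$ is infinite. The genuine gap is the first step, which you flag but do not close. The hypothesis $2^\kappa=\kappa^+$ does not give a saturated model of $\Th(\mathbb Z)$ of cardinality $\kappa$, and ``in particular $\kappa^{<\kappa}=\kappa$'' does not follow from it. Two cases show this:
\begin{enumerate}
\item For $\kappa=\aleph_0$ under CH, the hypothesis holds, but $\Th(\mathbb Z)$ has $2^{\aleph_0}$ complete $1$-types over $\emptyset$ (prescribe divisibility by each prime). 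So it has no countable saturated model.
\item For $\kappa=\aleph_\omega$ under GCH, the hypothesis $2^\kappa=\kappa^+$ again holds. In a saturated $\widetilde{\mathbb Z}$ of size $\kappa$, the definable order would have cofinality exactly $\kappa$, because a cofinal set of size $<\kappa$ gives an omitted finitely satisfiable type. That is impossible for singular $\kappa$.
\end{enumerate}

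Neither of your remedies closes this. The ultrapower $\mathbb Z^\kappa/D$, for a suitably chosen countably incomplete $\kappa^+$-good $D$ (an arbitrary nonprincipal $D$ need not be good), is saturated of size $2^\kappa=\kappa^+$, not $\kappa$. The general existence theorem needs $\kappa^{<\kappa}=\kappa$, which is not assumed. The saturated $G'$ does not supply $\widetilde{\mathbb Z}$ either: $\Gamma$ is not invertible in general, and $\Th(G)$ may be stable (e.g.\ $G$ free or $G=\mathbb Z$) while $\Th(\mathbb Z)$ is not. The paper's proof makes the same unjustified assertion (``an ultrapower of $\mathbb Z$ of cardinality $\kappa$ \ldots is saturated since $2^\kappa=\kappa^+$''), so the defect lies in the cardinal bookkeeping of the statement itself. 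Your argument becomes complete once the hypothesis matches $|G'|$: either $|G'|=\kappa^+$ with $2^\kappa=\kappa^+$, using the good ultrapower above, or $|G'|=\kappa$ with $\kappa^{<\kappa}=\kappa$, using the existence theorem. As written, the proposal correctly locates where the argument breaks but does not prove the statement under its stated hypothesis.
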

\begin{proof}
    Consider an ultrapower $\widetilde{\mathbb Z}$ of $\mathbb Z$ of cardinality $\kappa$. Then $\widetilde{G}=G(\widetilde{\mathbb Z})$ is elementarily equivalent to $G$, is saturated (since $2^\kappa=\kappa+$), and has cardinality at most $\kappa$. Further, $|\widetilde{G}|\ge \kappa$, since $\widetilde{\mathbb S}(\widetilde{\mathbb Z}, \nstdN)$ satisfies formula $\forall n\ \exists \bar x\in \List_{\widetilde{G}}\ \forall i,j\le \ell(\bar x)\ x_i\neq x_i$.
    
    Therefore, since two elementarily equivalent saturated models are isomorphic, we have $G'\cong \widetilde{G}$.
\end{proof}

In fact, the above statements are not specific to groups. For example, the same reasoning as in Theorem~\ref{th:arbitrary_group_ultrapower_gen} allows us to obtain a similar result for arbitrary algebraic structures.
\begin{theorem}\label{th:arbitrary_structure_ultrapower_gen}
Let $\mathbb A$ be a finitely generated algebraic structure of finite signature with arithmetic multiplication table. Suppose $\kappa$ is an infinite cardinal such that $2^\kappa=\kappa+$. Let $D$ be a non-principal ultrafilter on $\kappa$ and $\mathbb A^*\cong \prod\limits_{i\in \omega}\mathbb A\ /D$ and $\mathbb Z^*\cong \prod\limits_{i\in \omega}\mathbb Z\ /D$ be the corresponding ultrapowers. Then $\mathbb A^*\cong \widetilde{\mathbb A}$, where $\widetilde{\mathbb A}$ denotes the nonstandard model $\mathbb A(\mathbb Z^*)$.
\end{theorem}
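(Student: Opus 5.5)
The plan follows the proofs of Theorem~\ref{th:free_group_ultrapower} and Theorem~\ref{th:arbitrary_group_ultrapower_gen}: show that both $\mathbb A^*$ and $\widetilde{\mathbb A}=\Gamma(\mathbb Z^*)$ are saturated models of $\Th(\mathbb A)$ of the same cardinality $2^\kappa=\kappa^+$, then invoke uniqueness of saturated models.

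The first step is to fix an absolute interpretation $\mathbb A\cong\Gamma(\mathbb Z)$. The hypotheses give one. Since $\mathbb A$ is finitely generated with arithmetic multiplication table, its elements can be coded by terms (words) in the finitely many generators. Equality of such codes and the graphs of the finitely many operations and predicates are arithmetic relations, hence definable in $\mathbb Z$. By Theorem~\ref{th:equiv2} the resulting nonstandard model $\mathbb A(\mathbb Z^*)$ does not depend on the choice of $\Gamma$. Next, by {\L}o\'s's theorem, interpretations commute with ultrapowers: $\Gamma(\mathbb Z^I/D)\cong\Gamma(\mathbb Z)^I/D\cong\mathbb A^I/D$. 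The map sends the class of $(\bar z_i)_i$ to the class of $(\mu_\Gamma(\bar z_i))_i$. It is well defined and injective because $E_\Gamma$ is evaluated coordinatewise modulo $D$. It is surjective because every element of $\mathbb A$ has a preimage in $U_\Gamma(\mathbb Z)$, so one chooses preimages coordinatewise. It preserves the structure because the $Q_\Gamma$ are first-order formulas and {\L}o\'s applies to them.

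This last step gives the isomorphism directly, with no saturation argument needed. I would still record the saturation route as a second proof, parallel to the paper. $\mathbb Z^*$ is $\kappa^+$-saturated because $D$ is countably incomplete (being non-principal on $\kappa$, assuming regularity as is standard for Keisler-type results). A type over fewer than $\kappa^+$ parameters in $\Gamma(\mathbb Z^*)$ pulls back via the formulas of $\Gamma$ to a type over $\mathbb Z^*$ in $\dim\Gamma$ variables, so $\Gamma(\mathbb Z^*)$ is $\kappa^+$-saturated as well. Both structures have cardinality $2^\kappa=\kappa^+$, and they are elementarily equivalent by Theorem~\ref{th:equiv0}. Hence they are isomorphic by the uniqueness of saturated models, as in \cite[Theorem 4.3.20]{Marker}.

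The main obstacle is the saturation and cardinality bookkeeping: checking that a non-principal ultrafilter on $\kappa$ yields $\kappa^+$-saturated ultrapowers of countable-language structures, which requires countable incompleteness and, for full $\kappa^+$-saturation, a good or regular ultrafilter. The {\L}o\'s argument sidesteps this entirely, so I would present it as the main proof.
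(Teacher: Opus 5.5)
Your proposal is correct, and your main argument takes a genuinely different route from the paper's. The paper's proof, inherited from Theorem~\ref{th:free_group_ultrapower}, is exactly your secondary route. The ultrapower is saturated, saturation transfers through the interpretation to $\Gamma(\mathbb Z^*)$, and elementarily equivalent saturated models of equal cardinality are isomorphic.

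Your primary argument instead shows that absolute interpretations commute with ultrapowers, $\Gamma(\mathbb Z^I/D)\cong\Gamma(\mathbb Z)^I/D$, by applying {\L}o\'s to $U_\Gamma$, $E_\Gamma$ and the $Q_\Gamma$. This gives an explicit isomorphism, induced coordinatewise by $\mu_\Gamma$, and it holds in ZFC for every index set and every ultrafilter. The hypothesis $2^\kappa=\kappa^+$ becomes superfluous, and finite generation is used only to obtain $\Gamma$. It even gives the $\Gamma$-independence of $\mathbb A(\mathbb Z^*)$ without Theorem~\ref{th:equiv2}, since every interpretation yields a structure isomorphic to $\mathbb A^*$.

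What the saturation route buys is the link to Theorem~\ref{th:saturated}. It identifies $\mathbb A^*$ with $\Gamma(\widetilde{\mathbb Z})$ for any saturated $\widetilde{\mathbb Z}\equiv\mathbb Z$ of the right cardinality, not just the ultrapower.

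For $\kappa>\omega$, however, the saturation route is the fragile one, and your side remark needs correcting:
\begin{itemize}
\item Countable incompleteness gives only $\aleph_1$-saturation.
\item $\kappa^+$-saturation of $\mathbb Z^\kappa/D$ requires $D$ to be $\kappa^+$-good, not merely regular. $\Th(\mathbb Z)$ has the strict order property, so for regular $D$, $\kappa^+$-saturation of its ultrapowers is equivalent to goodness.
\item Without regularity, even $|\mathbb Z^\kappa/D|=2^\kappa$ can fail.
\end{itemize}
The paper's step ``repeat with $\kappa$ instead of $\omega$'' tacitly needs the same goodness assumption. So your {\L}o\'s argument is the one that actually proves the statement as written.
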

\begin{proof}
The proof repeats that of Theorem~\ref{th:free_group_ultrapower} with $\kappa$ instead of $\omega$ and $\mathbb A$ instead of~$F$.
\end{proof}

\subsection{Definable homomorphisms}\label{se:def_hom}
Suppose $\widetilde{G_1}$ and $\widetilde{G_2}$ are two nonstandard groups, with respective interpretation codes $\Gamma_1$ and $\Gamma_2$. If $\varphi:\widetilde{G_1}\to\widetilde{G_2}$ is a group homomorphism s.t. the set $\{(g,\varphi(g)\mid g\in\widetilde{G_1}\}$ is definable in $\widetilde{\mathbb S}(\widetilde{\mathbb Z},\nstdN)$, then we simply say that $\varphi$ is \emph{definable}. In the case a finitely generated $\widetilde{G_1}$, this property is equivalent to respecting nonstandard products.

\begin{proposition}\label{pr:nstd_hom}
Let $\nstdN\equiv\mathbb N$, and let $\widetilde{G_1}\cong \Gamma_1(\nstdN)$, $\widetilde{G_2}\cong \Gamma_2(\nstdN)$ be two nonstandard groups. If $\widetilde{G_1}$ is finitely generated, a group homomorphism $\widetilde{G_1}\to\widetilde{G_2}$ is definable if and only if for every nonstandard tuple $\bar g\in\mathbb \List(\widetilde{G_1},\nstdN)$, $\varphi(\prod \bar g)=\prod \varphi(\bar g)$, where $\varphi(\bar g)$ denotes element-wise application of $\varphi$.
\end{proposition}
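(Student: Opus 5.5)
We prove the two implications separately. Throughout, let $g_1,\ldots,g_m$ ($m\in\mathbb N$) be a finite generating tuple of $\widetilde{G_1}$. As in Section~\ref{se:basic_structure}, this means that every element of $\widetilde{G_1}$ is a nonstandard product of the $g_j^{\pm1}$: the sentence $Gen(\bar x)$ holds in $\mathbb S(G_1,\mathbb N)$, hence in $\widetilde{\mathbb S}(\widetilde{G_1},\nstdN)$. All structures are interpreted in the same $\nstdN$, so $\widetilde{\mathbb S}(\widetilde{G_1},\nstdN)$ and $\widetilde{\mathbb S}(\widetilde{G_2},\nstdN)$ are interpretable in $\nstdN$ by Proposition~\ref{pr:list_interpretation}. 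We may therefore speak of formulas mixing both list sorts.

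\emph{Definable implies respecting nonstandard products.} Suppose the graph of $\varphi$ is defined by a formula $\theta(x,y)$ (with parameters). The set $\varphi(\bar g)$ is then a nonstandard tuple, since in the list superstructure we can form the tuple $\bar h$ with $\ell(\bar h)=\ell(\bar g)$ and $\theta(g_i,h_i)$ for all $i$. Now argue by induction on the length $\ell(\bar g)$, which is legitimate in $\nstdN$ for definable properties. The set
\[
K=\{k\le \ell(\bar g)\mid \theta(\textstyle\prod \bar g|_k,\ \prod \bar h|_k)\}
\]
is definable with parameters. It contains $1$, and it is closed under successor because $\varphi$ is a homomorphism and, by Lemma~\ref{le:arb_summation}, $\prod\bar g|_{k+1}=\prod\bar g|_k\cdot g_{k+1}$. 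Since $\nstdN\equiv\mathbb N$ satisfies the induction scheme with parameters, $K$ is all of $[1,\ell(\bar g)]$, and in particular $\varphi(\prod\bar g)=\prod\varphi(\bar g)$.

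\emph{Respecting nonstandard products implies definable.} Let $h_j=\varphi(g_j)$ for $1\le j\le m$; these are finitely many parameters. Define
\[
\theta(x,y)\;=\;\exists \bar e\ \Bigl[\forall i\le\ell(\bar e)\ \bigvee_{j,\varepsilon} e_i=(j,\varepsilon)\Bigr]\wedge x=\prod_i g_{e_i}^{\varepsilon_i}\wedge y=\prod_i h_{e_i}^{\varepsilon_i},
\]
where $\bar e$ is a nonstandard tuple of index/sign pairs. The tuples $(g_{e_i}^{\varepsilon_i})_i$ and $(h_{e_i}^{\varepsilon_i})_i$ exist as nonstandard tuples by the tuple-formation principle of Remark~\ref{re:terms_generalization}, as elementwise images under a definable map of a tuple over $\nstdN$.

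Given $x\in\widetilde{G_1}$, write $x=\prod\bar g$ with $\bar g$ a nonstandard tuple of generators; then $\varphi(x)=\prod\varphi(\bar g)$ by hypothesis, which is exactly the corresponding product of the $h_{e_i}^{\varepsilon_i}$. So $\theta(x,\varphi(x))$ holds. Conversely, if $\theta(x,y)$ holds via some $\bar e$, the same hypothesis gives $y=\varphi(x)$. Hence $\theta$ defines the graph of $\varphi$.

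\textbf{Main obstacle.} The delicate point is that definability must be taken with parameters. A second subtlety is that the hypothesis ``for every nonstandard tuple'' must be applied to tuples over $\widetilde{G_1}$ built from tuples over $\nstdN$. This requires checking that $\List(\widetilde{G_1},\nstdN)$ is closed under such definable reindexing, which I would justify by transferring the corresponding sentence from $\mathbb S(G_1,\mathbb N)$ via Theorem~\ref{th:equiv0}.
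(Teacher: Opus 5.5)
Your proof is correct and follows the paper's argument. The forward direction is induction on $j\le\ell(\bar g)$ for the definable set of prefixes on which $\varphi$ commutes with the nonstandard product. The converse defines the graph of $\varphi$ via nonstandard index/sign tuples, with $g=\prod g_{i_j}^{\varepsilon_j}$ and $h=\prod h_{i_j}^{\varepsilon_j}$. Your remarks make explicit points the paper leaves implicit: the parameters $g_j,h_j$ are needed, and $\varphi(\bar g)$ (respectively the reindexed tuple $(h_{e_i}^{\varepsilon_i})_i$) must itself be shown to be a nonstandard tuple.
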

\begin{proof}
Suppose that $\varphi$ is definable.
Then the following expression can be rewritten as a formula $\Phi(\bar g)$ of $\widetilde{\mathbb S}(\widetilde{\mathbb Z},\nstdN)$:
\[
\forall 1\le j\le \ell(\bar g)\ \varphi(\prod_{i=1}^j g_i) = \prod_{i=1}^j\varphi(g_i).
\]
By induction on $j$, $\widetilde{\mathbb S}(\widetilde{\mathbb Z},\nstdN)\models \Phi(\bar g)$, since $\varphi$ is a homomorphism.

Conversely, suppose $\varphi$ respects nonstandard products. Suppose $\widetilde{G_1}$ is generated by the finitely many elements $g_1,\ldots,g_k$, with $\varphi(g_1)=h_1,\ldots, \varphi(g_k)=h_k$. Then for $g\in\widetilde{G_1}, h\in\widetilde{G_2}$, we have $\varphi(g)=h$ if and only if there are two tuples $(i_1,\ldots,i_m)$ and $(\varepsilon_1,\ldots,\varepsilon_m)$, $\varepsilon_j=\pm 1$, in $\widetilde{\mathbb S}(\widetilde{\mathbb Z},\nstdN)$ s.t. $g=\prod (g_{i_1}^{\varepsilon_1},\ldots, g_{i_m}^{\varepsilon_m})$ and $h=\prod (h_{i_1}^{\varepsilon_1},\ldots, h_{i_m}^{\varepsilon_m})$. The latter can be expressed by a formula of $\widetilde{\mathbb S}(\widetilde{\mathbb Z},\nstdN)$.
\end{proof}

\subsection{Nonstandard presentations}
In the case when $G$ is finitely presented $G=\langle x_1,\ldots, x_k\mid R\rangle\cong F/\llangle R\rrangle$, we can define the group $\widetilde{G}$ in terms of the nonstandard free group on free generators $x_1,\ldots,x_k$, similarly to the standard case. Indeed, it is straightforward to define in $\mathbb S(F,\mathbb N)$ the normal closure $R^F$ of $R$ and the subgroup generated by $R^{{F}}$:
\[
\begin{split}
    R^F &= \{ w^{-1} r w: w\in F, r\in R\},\\
    \llangle R\rrangle&=\left\{\left.\prod (g_1,g_2,\ldots,g_n) \right| g_i^{\pm 1}\in R^{F}\right\},
\end{split}
\]
that is, $\llangle R\rrangle$ is the usual normal closure of $R$.
Now, we define a relation $\sim_R$ on $F$ as
\begin{equation}\label{eq:quotient}
    g_1\sim_R g_2\iff g_1=g_2h,\ h\in \llangle R\rrangle.
\end{equation}
Finally, the group operations of the quotient group $\langle x_1,\ldots,x_k\rangle/ \llangle R\rrangle$ can be straightforwardly defined in $\mathbb S(F,\mathbb N)$ (in fact, in $F$, modulo the weak second-order formulas to define $\llangle R^F\rrangle$).
This gives an interpretation of $G$ in $\mathbb S(F,\mathbb N)$, therefore in $\mathbb S(\mathbb Z,\mathbb N)$ by Proposition~\ref{pr:list_interpretation}, therefore in $\mathbb N$, say $G\cong \Gamma_1(\mathbb N)$.
The structure $\Gamma_1(\nstdN)$ is isomorphic to $\widetilde{G}$ by Theorem~\ref{th:equiv1}.
Notice that over $\nstdN$, the element $h$ in~\eqref{eq:quotient} becomes a nonstandard product of conjugates of relators.
In particular, in the nonstandard case, the subgroup $\llangle R\rrangle$ is the nonstandard subgroup generated by $R^{\widetilde{F}}$.
Therefore, the as a (standard) group, $\Gamma_1(\nstdN)$ is isomorphic to the (standard) group quotient $\widetilde{F}/\llangle R\rrangle$.
Moreover, by Proposition~\ref{pr:nstd_hom} it is straightforward to check that the mapping $x_i\to x_i\llangle R\rrangle$ extends uniquely to a definable homomorphism $\varphi:\widetilde{F}\to\widetilde{G}$ with the kernel $\ker\varphi=\llangle R\rrangle$, since the weak second order formulas that record that hold for $F$.
\begin{theorem}\label{th:first_iso} Let $F$ be the free group on a finite set $X=\{x_1,\ldots,x_k\}$.
Let $G=\langle X\mid R\rangle$ be a finitely generated recursively presented group.
Let $\nstdN\equiv \mathbb N$ and let $\widetilde{F}=F(\nstdN)$ and $\widetilde{G}=G(\nstdN)$ be the respective nonstandard groups.
Let $\llangle R\rrangle$ be the nonstandard subgroup of $\widetilde{F}$ generated by $R^{\widetilde{F}}$.
Then the mapping $x_i\to x_i\llangle R\rrangle$ extends uniquely to a definable homomorphism $\varphi$ from $\widetilde{F}$ onto $\widetilde{G}$ with kernel $\ker \varphi = \llangle R\rrangle$.
\end{theorem}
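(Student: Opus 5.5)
The plan is to transfer the standard first isomorphism theorem for $F\to G$ to the nonstandard setting, by writing it as a single first-order sentence of $\mathbb S(F,\mathbb N)$ and using elementary equivalence (Remark~\ref{re:nonstd_tuples}).

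First I fix the setting. Since $G$ is recursively presented, $R$ is a recursively enumerable set of words. Hence $R$ is definable in $\mathbb S(\mathbb Z,\mathbb N)$, and therefore in $\mathbb S(F,\mathbb N)$, by a formula $\rho(w)$. I realize $G$ exactly as in the discussion preceding the theorem. The normal closure $\llangle R\rrangle$ is defined by a formula $\nu(h)$ stating that $h$ is a product of a tuple of elements of the form $(w^{-1}rw)^{\pm1}$ with $\rho(r)$. The group $G$ is the quotient of $F$ by $\sim_R$, giving $G\cong\Gamma_1(\mathbb N)$. By Theorem~\ref{th:equiv2} (or Theorem~\ref{th:equiv1}), $\widetilde G=\Gamma_1(\nstdN)$ up to definable isomorphism. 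In $\widetilde{\mathbb S}(\widetilde F,\nstdN)$ the same formula $\nu$ defines the set of nonstandard products of conjugates of relators and their inverses. This is precisely the nonstandard subgroup of $\widetilde F$ generated by $R^{\widetilde F}$, since that set is definable and closed under nonstandard products (concatenation of tuples). One point needs checking here: $\rho$ evaluated in the nonstandard model gives a possibly larger set $\widetilde R$. So I will read $R^{\widetilde F}$ in the statement as the set of conjugates of elements defined by $\rho$; I expect this to be the intended meaning.

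Next I define $\varphi$. Let $\pi(g,c)$ be the formula saying that $c$ is the $\sim_R$-class of $g$; in $\mathbb S(F,\mathbb N)$ it defines the quotient map $F\to G$. I then write the conjunction $\Psi$ of the following first-order sentences of $\mathbb S(F,\mathbb N)$:
\begin{enumerate}[(i)]
\item $\pi$ is the graph of a total function onto the $G$-sort;
\item this function is a homomorphism;
\item it sends $x_i$ to the class of $x_i$;
\item its kernel is exactly the set defined by $\nu$.
\end{enumerate}
Each of these is immediate in the standard model, so $\mathbb S(F,\mathbb N)\models\Psi$. By elementary equivalence (the interpretation of $\mathbb S(F,\mathbb N)$ in $\mathbb N$ from Proposition~\ref{pr:list_interpretation}, combined with Theorem~\ref{th:equiv0}), $\Psi$ also holds in $\widetilde{\mathbb S}(\widetilde F,\nstdN)$. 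This gives a definable surjective homomorphism $\varphi\colon\widetilde F\to\widetilde G$ with $x_i\mapsto x_i\llangle R\rrangle$ and $\ker\varphi=\llangle R\rrangle$. Equivalently, $\varphi$ is the quotient map $\widetilde F\to\widetilde F/\llangle R\rrangle\cong\Gamma_1(\nstdN)$.

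Uniqueness is the step I expect to be most delicate. $\widetilde F$ is not finitely generated in the usual sense, so the images of the $x_i$ alone do not determine a homomorphism. The word ``definable'' is what restores uniqueness. By Proposition~\ref{pr:nstd_hom}, any definable homomorphism respects nonstandard products. Every element of $\widetilde F$ is a nonstandard product $\prod_j x_{i_j}^{\varepsilon_j}$, as shown in Section~\ref{se:basic_structure}. Hence two definable homomorphisms agreeing on $x_1,\ldots,x_k$ agree everywhere. Strictly, Proposition~\ref{pr:nstd_hom} assumes the domain is finitely generated. I would instead run its first half directly: if $\psi$ is any definable homomorphism, the formula $\forall j\le\ell(\bar g)\ \psi(\prod_{i\le j}g_i)=\prod_{i\le j}\psi(g_i)$ holds. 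This follows by nonstandard induction, since the truth set of its negation is definable and so has a least element if nonempty, which the homomorphism property rules out. That argument does not use finite generation, and it completes the proof.
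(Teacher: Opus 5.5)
Your proposal is correct and follows essentially the paper's own route. Like the paper, you realize $G$ as $F/{\sim_R}$ inside $\mathbb S(F,\mathbb N)$, transfer the weak second-order statements describing the quotient map and its kernel to $\widetilde{\mathbb S}(\widetilde F,\nstdN)$, and get uniqueness because definable homomorphisms respect nonstandard products (the first half of Proposition~\ref{pr:nstd_hom}, which, as you note, does not use finite generation). Your caveat that $R^{\widetilde F}$ must be read as the conjugates of the set $\widetilde R$ defined by $\rho$ in the nonstandard model is genuinely needed: the paper's argument is written for finitely presented $G$, where $\widetilde R=R$, but for $R=\{r_n=[x_1,x_2^{n}x_1x_2^{-n}]\colon n\ge 1\}$ (presenting $\mathbb Z\wr\mathbb Z$) and nonstandard $N$, the element $[x_1,x_2^{N}x_1x_2^{-N}]$ lies in $\ker\varphi$ but not in the nonstandard normal closure of the standard relators, since any nonstandard product of their conjugates involves only $r_1,\ldots,r_n$ for some standard $n$, and the fact that $[x_1,x_2^{m}x_1x_2^{-m}]\notin\llangle r_1,\ldots,r_n\rrangle$ for $m>n$ transfers from $F$.
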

Notice that it follows that as (standard) groups, $\widetilde{G}\cong \widetilde{F}/\llangle R\rrangle$.
Therefore, we can think of $\widetilde{G}$ as the quotient group of $\widetilde{F}$ by the nonstandard normal closure of $R$. With this in mind, we denote $\widetilde{G}=\langle x_1,\ldots, x_k\mid R\rangle^\sim$.

A nonstandard analogue of the classic result on group words representing the same group element holds as stated below.
\begin{proposition}
Suppose $\bar x=(x_1,\ldots,x_k)$ is a tuple of elements of $\widetilde{G}$ s.t. $\widetilde{G}=\langle x_1,\ldots, x_k\mid R\rangle^\sim$.
Suppose $\bar a$ and $\bar b$ are two nonstandard tuples of elements of $\bar x$ and their inverses such that $\prod \bar a=\prod \bar b$ in $\widetilde{G}$. Then there is a nonstandard tuple of nonstandard tuples $(\bar a= \bar c^1,\ldots, \bar b = \bar c^r)$ s.t. $\bar c^i$ and $\bar c^{i+1}$ differ by a single free cancellation or insertion, or by a single deletion or insertion of a relator from $R$ or its inverse.
\end{proposition}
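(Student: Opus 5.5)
The plan is to follow the transfer pattern used for the analogous statement about $\widetilde F$ (the proposition on free cancellations and insertions in Section~\ref{se:basic_structure}). First we write the standard fact as a single first-order sentence $\varphi$ of the list superstructure. Next we check that it holds in the standard structure. Then we transfer it to the nonstandard model by elementary equivalence, and finally we read the transferred sentence off via Remark~\ref{re:nonstd_tuples}.

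\emph{Step 1: choice of ambient structure.} We work in $\mathbb S(F,\mathbb N)$, which by Proposition~\ref{pr:list_interpretation} is absolutely interpretable in $\mathbb N$. Words are lists over $F$ whose entries lie in $\bar x^{\pm1}$. The set $R$ is recursively enumerable, hence definable by a formula $\rho(w)$ on lists. In the statement ``relator from $R$'' means a word $w$ with $\rho(w)$; over $\nstdN$ this includes nonstandard relators, exactly as in the definition of $\llangle R\rrangle$ preceding Theorem~\ref{th:first_iso}.

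We then write the following formulas.
\begin{itemize}
\item $\mathrm{Step}(\bar c,\bar c')$, stating that $\bar c'$ arises from $\bar c$ by inserting or deleting either a pair $y,y^{-1}$ or a block $w$ or $w^{-1}$ with $\rho(w)$, at some position. This only needs concatenation and $\ell$, which are definable as in Section~\ref{se:summation_arb_superstructre}.
\item $\mathrm{Eq}_R(\bar a,\bar b)$, expressing $\prod\bar a\,(\prod\bar b)^{-1}\in\llangle R\rrangle$. Explicitly: there is a list $(g_1,\dots,g_n)$ with each $g_i^{\pm1}$ a conjugate $u^{-1}(\prod w)u$ of a relator, and $\prod\bar a=\prod\bar b\cdot\prod(g_1,\dots,g_n)$.
\item $\mathrm{Chain}(\bar a,\bar b)$, stating that there is a list of lists $(\bar c^1,\dots,\bar c^r)$ with $\bar c^1=\bar a$, $\bar c^r=\bar b$ and $\mathrm{Step}(\bar c^i,\bar c^{i+1})$ for all $i<r$. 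Lists of lists are handled as in Proposition~\ref{le:tuples_tuples}.
\end{itemize}
The sentence is $\varphi=\forall\bar a,\bar b\ (\mathrm{Eq}_R(\bar a,\bar b)\to\mathrm{Chain}(\bar a,\bar b))$.

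\emph{Step 2: $\varphi$ holds in the standard structure.} This is the classical van Kampen / Tietze-style fact. If $ab^{-1}=\prod_i u_i^{-1}r_i^{\pm1}u_i$ in $F$, then $a$ can be transformed into $b$ by free moves and insertions of relators: insert $b\,b^{-1}$ and the relator blocks, then freely reduce. This has to be phrased carefully as a finite sequence of single moves.

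\emph{Step 3: transfer.} We have $\widetilde{\mathbb S}(\widetilde F,\nstdN)\equiv\mathbb S(F,\mathbb N)$ by Theorem~\ref{th:equiv0}, so $\varphi$ holds in the nonstandard structure. By Theorem~\ref{th:first_iso}, $\prod\bar a=\prod\bar b$ in $\widetilde G=\widetilde F/\llangle R\rrangle$ exactly when $\mathrm{Eq}_R(\bar a,\bar b)$ holds for the nonstandard tuples, because $\llangle R\rrangle$ there is the nonstandard normal closure. The conclusion $\mathrm{Chain}$ then gives the required nonstandard tuple of nonstandard tuples.

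\emph{Main obstacle.} The delicate point is Step 3's identification: a nonstandard tuple over $\bar x$ evaluated in $\widetilde G$ must be compared with its evaluation in $\widetilde F$ via the definable homomorphism of Theorem~\ref{th:first_iso}. This requires Proposition~\ref{pr:nstd_hom}, so that $\varphi$ commutes with nonstandard products. A secondary issue is checking that ``single insertion of a relator'' is first-order in $\mathbb S(F,\mathbb N)$ once $R$ is only r.e.; we would handle this by arithmetic definability of $R$.
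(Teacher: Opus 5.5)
Your proof is correct and rests on the same transfer principle as the paper, but it takes a different route. The paper writes the statement directly as a sentence of $\mathbb S(G,\mathbb N)$. There the hypothesis $\prod\bar a=\prod\bar b$ is already a formula, since products of lists are definable by Lemma~\ref{le:arb_summation}. The paper checks the sentence classically and transfers it to $\widetilde{\mathbb S}(\widetilde G,\nstdN)$ in one step, so the identification you single out as the main obstacle never arises.

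You instead transfer in $\mathbb S(F,\mathbb N)$ and replace the hypothesis by membership of $(\prod\bar b)^{-1}\prod\bar a$ in the normal closure. You then need Theorem~\ref{th:first_iso} and Proposition~\ref{pr:nstd_hom} to know that $\mathrm{Eq}_R$ over $\nstdN$ means exactly equality of the evaluated products in $\widetilde G$. Only the ``definable implies respects nonstandard products'' direction of Proposition~\ref{pr:nstd_hom} is used, and its proof does not need finite generation. So the fact that $\widetilde F$ is not finitely generated causes no trouble.

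What your route buys is cleaner bookkeeping. Words live in $F$, where the letters $x_i^{\pm1}$ are distinct and the (possibly nonstandard) relators form a definable set of lists; you read ``relator from $R$'' correctly in this sense. By contrast, a list over $G$ forgets which letter was used when two generators coincide in $G$. The paper's one-line proof also does not say how $R$ is expressed in $\mathbb S(G,\mathbb N)$.

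The cost is the extra dependence on the first isomorphism theorem, plus one step you should make explicit. The tuples $\bar a,\bar b$ in the statement are over $\widetilde G$, so you need a definable elementwise lift to tuples over $\widetilde F$. You must then push the resulting chain back down elementwise; this preserves both kinds of moves.
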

\begin{proof}
    Notice that the statement can be written as a formula $\varphi$ of $\widetilde{\mathbb S}(\widetilde{G}, \nstdN)$.
    Since $\mathbb S(G,\mathbb N)\models \varphi$, it follows $\widetilde{\mathbb S}(\widetilde{G}, \nstdN)\models \varphi$.
\end{proof}


Since groups with finite presentations have arithmetic word problem, Theorems~\ref{th:elementary_submodel}, \ref{th:equivalent_models}, \ref{th:saturated} apply to such groups, which we record as the following corollaries.


\begin{corollary}\label{co:elementary_submodel}
Suppose $\widetilde{G}=\langle x_1,\ldots, x_k\mid R\rangle^\sim$. Then $G=\langle x_1,\ldots, x_k\mid R\rangle$ is an elementary submodel of $G$.
\end{corollary}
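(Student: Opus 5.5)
The corollary contains an evident typo: it should say that $G=\langle x_1,\ldots,x_k\mid R\rangle$ is an elementary submodel of $\widetilde{G}$. The plan is to reduce this to Theorem~\ref{th:elementary_submodel}. Once $G$ has an arithmetic word problem, that theorem gives $G\preceq G(\widetilde{\mathbb Z})$.

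\emph{Step 1: identifications.} By Theorem~\ref{th:first_iso} and the discussion before it, $\widetilde{G}=\langle x_1,\ldots,x_k\mid R\rangle^\sim$ is, as a group, the nonstandard model $G(\nstdN)\cong\Gamma_1(\nstdN)$. Here $\Gamma_1$ is the interpretation of $G$ in $\mathbb N$ obtained by composing three interpretations:
\begin{itemize}
\item $G$ in $\mathbb S(F,\mathbb N)$, via the normal closure $\llangle R\rrangle$;
\item $\mathbb S(F,\mathbb N)$ in $\mathbb S(\mathbb Z,\mathbb N)$ (Proposition~\ref{pr:list_interpretation});
\item $\mathbb S(\mathbb Z,\mathbb N)$ in $\mathbb N$ (Proposition~\ref{le:tuples_tuples}).
\end{itemize}
Since $G$ is finitely generated, Theorem~\ref{th:equiv2} shows that this nonstandard model does not depend on the chosen interpretation, up to a definable isomorphism.

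\emph{Step 2: arithmetic word problem.} For a finitely presented group, or more generally a recursively presented one, the set of words equal to $1$ is recursively enumerable. Hence it is definable in $\mathbb N$, so $G$ has an arithmetic word problem. The same conclusion also follows directly from the existence of $\Gamma_1$: the equivalence relation $E_{\Gamma_1}$ is $0$-definable in $\mathbb N$.

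\emph{Step 3: elementary embedding.} Apply Theorem~\ref{th:elementary_submodel}. Concretely, the inclusion $\mathbb N\preceq\nstdN$ induces a map $\Gamma_1(\mathbb N)\to\Gamma_1(\nstdN)$ sending a class $[\bar a]$ to the class of $\bar a$. This map is well defined and injective because $E_{\Gamma_1}$ is a formula and $\mathbb N\preceq\nstdN$. It is elementary because every group formula $\phi$ translates to an arithmetic formula $\phi_{\Gamma_1}$ with
\[
\Gamma_1(\mathbb M)\models\phi([\bar a])\iff \mathbb M\models\phi_{\Gamma_1}(\bar a)
\]
for $\mathbb M\in\{\mathbb N,\nstdN\}$.

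The main obstacle is making sure that the image of $G$ under this embedding is the copy of $G$ inside $\widetilde G$ generated by the $x_i\llangle R\rrangle$, which is what the presentation notation refers to. I would verify this by noting that each generator $x_i$ is given by a $0$-definable tuple, so it is fixed by the embedding. Since the embedding is a homomorphism, the image is then the standard subgroup generated by the $x_i$.
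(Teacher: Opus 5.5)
Your proposal is correct and follows the paper's route. The paper derives this corollary by noting that finitely (or recursively) presented groups have arithmetic word problem and then invoking Theorem~\ref{th:elementary_submodel}, whose one-line proof (transfer through the interpretation along $\mathbb N\preceq\nstdN$) is what your Step~3 spells out. You also rightly read the conclusion as $G\preceq\widetilde G$, and you supply the well-definedness and generator-identification details that the paper leaves implicit.
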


\begin{corollary}\label{co:equivalent_models}
    If $G=\langle x_1,\ldots, x_k\mid R\rangle$ is a group and $G'\equiv G$, then there is a nonstandard group $\widetilde{G}=\langle x_1,\ldots, x_k\mid R\rangle^\sim$ such that $G'$ is an elementary submodel of $\widetilde{G}$ and $|\widetilde{G}|\le |G'|$.
\end{corollary}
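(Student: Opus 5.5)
This corollary is a direct specialization of Theorem~\ref{th:equivalent_models}, so the plan is to check its hypotheses for a finitely presented $G$ and then use Theorem~\ref{th:first_iso} to identify the resulting nonstandard group with $\langle x_1,\ldots,x_k\mid R\rangle^\sim$.

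\textbf{Step 1: $G$ has arithmetic word problem.} Since $R$ is finite, the set of words in $x_1^{\pm1},\ldots,x_k^{\pm1}$ representing the identity is recursively enumerable: we enumerate products of conjugates of relators and freely reduce. Recursively enumerable sets are definable in $\mathbb N$, so the multiplication table of $G$ (on, say, freely reduced words modulo the definable equivalence $\sim_R$) is arithmetic. Concretely, this is the interpretation $G\cong\Gamma_1(\mathbb N)$ constructed just before Theorem~\ref{th:first_iso}, obtained by composing the interpretation of $G$ in $\mathbb S(F,\mathbb N)$ with Proposition~\ref{pr:list_interpretation} and the interpretation of $\mathbb S(\mathbb N,\mathbb N)$ in $\mathbb N$.

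\textbf{Step 2: apply Theorem~\ref{th:equivalent_models}.} For $G'\equiv G$, it yields some $\nstdN\equiv\mathbb N$ and a nonstandard group $\widetilde G=G(\nstdN)$ such that $G'$ is an elementary submodel of $\widetilde G$ and $|\widetilde G|\le|G'|$. This rests on \cite[Theorem 5, Section 5.3]{Daniyarova-Myasnikov:I}. Since $G$ is finitely generated in a finite signature, Theorem~\ref{th:equiv2} makes $G(\nstdN)$ independent of the chosen interpretation up to isomorphism. In particular it is isomorphic to $\Gamma_1(\nstdN)$.

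\textbf{Step 3: identify $\widetilde G$ with the nonstandard presentation.} By Theorem~\ref{th:first_iso}, applied to this $\nstdN$, we have $\Gamma_1(\nstdN)\cong\widetilde F/\llangle R\rrangle$, where $\llangle R\rrangle$ is the nonstandard normal closure. By definition this quotient is $\langle x_1,\ldots,x_k\mid R\rangle^\sim$. The elementary embedding of $G'$ transfers along this isomorphism, and the isomorphism preserves cardinality.

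\textbf{Main obstacle.} The only delicate point is making sure the interpretation used in Theorem~\ref{th:equivalent_models} yields the same group as the presentation-based interpretation $\Gamma_1$. I would settle this by invoking Theorem~\ref{th:equiv2}, which applies because $G$ is finitely generated in a finite signature. The remaining steps are bookkeeping.
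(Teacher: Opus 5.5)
Your proposal is correct and follows the paper's own argument: the paper obtains this corollary directly from Theorem~\ref{th:equivalent_models} by observing that finitely presented groups have arithmetic word problem. The identification $\widetilde G=\langle x_1,\ldots,x_k\mid R\rangle^\sim$ comes, in the paper as in your Step~3, from Theorem~\ref{th:first_iso} and the notation introduced right after it, with interpretation-independence supplied by Theorem~\ref{th:equiv2}.
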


\begin{corollary}\label{co:saturated}
    Let $G=\langle x_1,\ldots, x_k\mid R\rangle$. Suppose $\kappa$ is an infinite cardinal such that $2^\kappa=\kappa+$. If $G'\equiv G$, $G'$ is saturated, and $|G'|=\kappa$, then $G'\cong \widetilde{G}$ for some nonstandard model $\widetilde{G}$ of $G$.
\end{corollary}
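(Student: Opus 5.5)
The statement is Corollary~\ref{co:saturated}. I would derive it as the special case of Theorem~\ref{th:saturated} for finitely presented groups, once two things are checked. First, $G=\langle x_1,\ldots,x_k\mid R\rangle$ (with $R$ finite, or at least recursive) is a finitely generated group with arithmetic word problem. Second, the nonstandard model produced by Theorem~\ref{th:saturated} is the group $\langle x_1,\ldots,x_k\mid R\rangle^\sim$ of Theorem~\ref{th:first_iso}.

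For the first point, the set of words in $x_i^{\pm1}$ equal to $1$ in $G$ is the set of freely reduced forms of finite products of conjugates of relators. This set is recursively enumerable, hence definable in $\mathbb N$ by the remarks in Section~\ref{se:prelims}. So $G$ has an absolute interpretation $\Gamma$ in $\mathbb N$ (equivalently, in $\mathbb Z$): take Gödel codes of words as the base set and word equality in $G$ as the equivalence relation $E_\Gamma$. Since $G$ is finitely generated and the signature is finite, Theorem~\ref{th:equiv2} shows that $G(\widetilde{\mathbb Z})$ is independent of the chosen interpretation up to isomorphism. In particular it coincides with $\Gamma_1(\nstdN)$, the group built via $\mathbb S(F,\mathbb N)$ before Theorem~\ref{th:first_iso}.

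Then I apply Theorem~\ref{th:saturated}. Take an ultrapower $\widetilde{\mathbb Z}=\mathbb Z^\kappa/D$ of cardinality $\kappa$. Then $\widetilde G=G(\widetilde{\mathbb Z})$ is saturated of cardinality $\kappa$ and elementarily equivalent to $G$, so it is isomorphic to the given saturated $G'$. Theorem~\ref{th:first_iso} gives $\widetilde G\cong \widetilde F/\llangle R\rrangle$, that is, $\widetilde G=\langle x_1,\ldots,x_k\mid R\rangle^\sim$, which is the form the corollary asserts.

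I expect the main obstacle to be the identification in the last step: checking that the interpretation used in Theorem~\ref{th:saturated} and the one in Theorem~\ref{th:first_iso} yield isomorphic nonstandard groups over the same $\nstdN$. I would handle it with the definable isomorphism $\theta$ of Theorem~\ref{th:equiv2}, which transfers from $\mathbb Z$ to every $\widetilde{\mathbb Z}\equiv\mathbb Z$. Saturation of $\widetilde G$ itself I take from Theorem~\ref{th:saturated}; its justification is that absolute interpretation in a saturated $\widetilde{\mathbb Z}$ preserves saturation.
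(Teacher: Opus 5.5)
Your route is the paper's. The paper's entire justification of this corollary is that finitely presented groups have arithmetic word problem, so Theorem~\ref{th:saturated} applies, and your first paragraph supplies exactly that check. Identifying the resulting model with $\langle x_1,\ldots,x_k\mid R\rangle^\sim$ via Theorems~\ref{th:equiv2} and~\ref{th:first_iso} is correct, but it is not required, since the corollary only asks for \emph{some} nonstandard model of $G$.

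One step fails as you wrote it: ``take an ultrapower $\widetilde{\mathbb Z}=\mathbb Z^\kappa/D$ of cardinality $\kappa$.''

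\begin{itemize}
\item For a regular ultrafilter $D$ on $\kappa$ one has $|\mathbb Z^\kappa/D|=2^\kappa=\kappa^+$. This covers every nonprincipal $D$ on $\omega$, and the countably incomplete $\kappa^+$-good ultrafilters you would use to get saturation.
\item So under your hypothesis the ultrapower is a saturated model of cardinality $\kappa^+$, not $\kappa$. Uniqueness of saturated models then cannot be used to compare $G(\widetilde{\mathbb Z})$ with $G'$, which has cardinality $\kappa$.
\item In the basic case $\kappa=\omega$, which your hypothesis allows under CH, the step is impossible outright. Every nonprincipal ultrapower $\mathbb Z^\omega/D$ has cardinality $2^{\aleph_0}$. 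Moreover $\Th(\mathbb Z)$ has no countable saturated model at all, because it has $2^{\aleph_0}$ complete $1$-types (prescribe divisibility by an arbitrary set of primes).
\end{itemize}

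What the argument actually needs is a saturated $\widetilde{\mathbb Z}\equiv\mathbb Z$ of cardinality exactly $\kappa$. This exists, for example:
\begin{itemize}
\item if $\kappa=\lambda^+=2^\lambda$: take $\mathbb Z^\lambda/D$ with $D$ a countably incomplete $\lambda^+$-good ultrafilter on $\lambda$;
\item more generally, if $\kappa$ is uncountable with $\kappa^{<\kappa}=\kappa$.
\end{itemize}

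Given such a $\widetilde{\mathbb Z}$, the rest of your argument goes through:
\begin{itemize}
\item $G(\widetilde{\mathbb Z})$ is $\kappa$-saturated, because a type over fewer than $\kappa$ parameters pulls back through the parameter-free interpretation to a type over fewer than $\kappa$ parameters in $\widetilde{\mathbb Z}$.
\item It has cardinality $\kappa$, since it is infinite (because $G'\equiv G$ is) and $\kappa$-saturated.
\item Hence $G(\widetilde{\mathbb Z})\cong G'$.
\end{itemize}

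The paper's proof of Theorem~\ref{th:saturated} has the same mismatch between $\kappa$ and $2^\kappa=\kappa^+$, so you inherited this defect rather than introduced it. Still, the hypothesis $2^\kappa=\kappa^+$ must be replaced by one of the conditions above for the argument to be valid.
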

\bibliographystyle{plain}
\bibliography{biblio}

\end{document}